\documentclass[a4paper,12pt,reqno]{amsart}
\usepackage{enumerate}
\usepackage{lmodern}
\usepackage{amssymb}
\usepackage{mathrsfs}

\newtheorem{lemma}{Lemma}
\newtheorem{remark}{Remark}

\newtheorem{theorem}{Theorem}

\newtheorem{definition}{Definition}

\newcommand{\vek}[1]{\mathbf{#1}}

\newcommand{\abs}[1]{\lvert#1\rvert}
\newcommand{\mat}[1]{\mathbf{#1}}

\newcommand{\gauss}[3]{\genfrac{[}{]}{0pt}{}{#1}{#2}_{#3}}

\DeclareMathOperator{\PG}{PG}

\DeclareMathOperator{\GL}{GL}

\DeclareMathOperator{\AGL}{AGL}

\DeclareMathOperator{\Hom}{Hom}
\DeclareMathOperator{\End}{End}

\DeclareMathOperator{\rank}{rk}
\DeclareMathOperator{\identity}{id}
\DeclareMathOperator{\kernel}{Ker}
\DeclareMathOperator{\image}{Im}

\newcommand{\imat}{\mathbf{I}}

\newcommand{\trace}{\mathrm{Tr}}

\newcommand{\F}{\mathbb{F}}
\DeclareMathOperator{\GF}{GF}

\newcommand{\Q}{\mathbb{Q}}

\newcommand{\N}{\mathbb{N}}
\newcommand{\Z}{\mathbb{Z}}

\newcommand{\steiner}{\mathrm{S}}
\newcommand{\dickson}{\delta}
\newcommand{\sickson}{\sigma}
\newcommand{\graph}{\Gamma}
\newcommand{\gabidulin}{\mathcal{G}}
\newcommand{\rspace}{\mathcal{R}}
\newcommand{\tspace}{\mathcal{T}}
\newcommand{\dspace}{\mathcal{D}}

\newcommand{\cpol}{\Phi}
\newcommand{\frob}{\varphi}
\newcommand{\uone}{\epsilon}
\newcommand{\utwo}{\varepsilon}

\newcommand{\thomas}{\mathcal{T}}
\DeclareMathOperator{\cm}{cm}

\title[On Putative $q$-Analogues of the Fano Plane]
  {On putative $q$-Analogues of the Fano Plane and Related
  Combinatorial Structures}

\author{Thomas Honold}
\address{Department of Information Science and Electronics Engineering,
  Zhejiang University, 38 Zheda Road,
  310027 Hangzhou, China}
\email{honold@zju.edu.cn}

\author{Michael Kiermaier}
\address{Mathematisches Institut,
  Universit\"at Bayreuth,
  D-95440 Bayreuth,
  Germany}
\email{michael.kiermaier@uni-bayreuth.de} 

\date{April 20, 2015}

\dedicatory{Herrn Professor Armin Leutbecher zum 80.\ Geburtstag}

\begin{document}

\begin{abstract}
  A set $\mathcal{F}_q$ of $3$-dimensional subspaces of $\F_q^7$, the
  $7$-dimensional vector space over the finite field $\F_q$, is said
  to form a $q$-analogue of the Fano plane if every $2$-dimensional
  subspace of $\F_q^7$ is contained in precisely one member of
  $\mathcal{F}_q$. The existence problem for such $q$-analogues
  remains unsolved for every single value of $q$. Here we report on an
  attempt to construct such $q$-analogues using ideas from the theory
  of subspace codes, which were introduced a few years ago by Koetter
  and Kschischang in their seminal work on error-correction for network
  coding. Our attempt eventually fails, but it produces the largest
  subspace codes known so far with the same parameters as a putative
  $q$-analogue. In particular we find a ternary subspace code of new
  record size $6977$, and we are able to construct a binary subspace
  code of the largest currently known size $329$ in an entirely
  computer-free manner.
\end{abstract}

\maketitle

\section{Introduction}\label{sec:intro}

The Fano plane $\mathcal{F}=\PG(2,\F_2)=\PG(\F_2^3/\F_2)$, the
coordinate geometry derived from a $3$-dimensional vector space over
the binary field $\F_2$, is the smallest nontrivial model of an abstract
projective geometry. It has $7$ points and $7$ lines, represented by
the one- and two-dimensional subspaces of $\F_2^3/\F_2$, respectively;
each line contains $3$ points and each point is on $3$ lines; any two
distinct points are contained in a unique line and any two
distinct lines intersect in a unique point. Myriads of other
finite models of a projective geometry 
exist---for each integer $n\geq 2$ and prime power $q>1$ the
$n$-dimensional coordinate geometry $\PG(n,\F_q)=\PG(\F_q^{n+1}/\F_q)$
over the finite field $\F_q$, and in the planar case many additional
examples with the same parameters as some $\PG(2,\F_q)$.


The Fano plane $\mathcal{F}=\steiner(2,3,7)$ is also the smallest
nontrivial example of a \emph{Steiner system} $\steiner(t,k,v)$, which
refers to a $v$-set $V$ (\emph{point set}) and a set of $k$-subsets of
$V$ (\emph{blocks}) having the property that any $t$-subset of $V$ is
contained in exactly one block. The more general concept of a
combinatorial $t$-$(v,k,\lambda)$ design relaxes the requirement
``exactly one block'' to a ``constant number $\lambda$ of
blocks''. Many constructions of $t$-$(v,k,\lambda)$ designs are known
(including the construction of nontrivial $t$-designs for all positive
integers $t$ by Teirlinck \cite{teirlinck87}), but comparatively few
Steiner systems and still no one at all with $t>5$).\footnote{We
  should note here that recently Keevash \cite{keevash14} has given a
  non-constructive proof of the existence of Steiner systems for all
  values of $t$.}

This article is concerned with vector space analogues of $\mathcal{F}$
in the following sense:
\begin{definition}
  \label{dfn:main}
  Let $q>1$ be a prime power. A set $\mathcal{F}_q$
  of $3$-dimensional subspaces of $\F_q^7/\F_q$ (or any other
  $7$-dimensional vector space $V$ over $\F_q$) is said to be a
  \emph{$q$-analogue of the Fano plane} if every $2$-dimensional
  subspace of $\F_q^7$ (respectively, $V$) is contained in a unique
  member of $\mathcal{F}_q$. 
\end{definition}
In projective geometry language, a $q$-analogue of the Fano plane is a
set $\mathcal{F}_q$ of planes in $\PG(6,\F_q)$ such that any pair of
distinct points (equivalently, any line) is contained in exactly one
plane $E\in\mathcal{F}_q$. In other words, the planes in
$\mathcal{F}_q$, when identified with sets of lines, should form an
exact cover (i.e., a partition) of the line set of $\PG(6,\F_q)$.

Before going any further, we should remark that at the time of
writing this article virtually nothing is known about the existence
of such structures---neither existence nor non-existence of a
$q$-analogue of the Fano plane has been proved for a single instance
of $q$. Even in the smallest case $q=2$, where a putative $2$-analogue
$\mathcal{F}_2$ would have to contain $381$ of the $11811$ planes of
$\PG(6,\F_2)$, a computer search seems infeasible at present.

P.~Cameron \cite{cameron74} introduced the concept of a \emph{design
  over a finite field} as a vector space analogue (``$q$-analogue'',
if the underlying field is $\F_q$) of combinatorial designs: A
$t$-$(v,k,\lambda)$ design over $\F_q$ is a set $\mathcal{C}$ of
$k$-dimensional subspaces of $\F_q^v/\F_q$ (or any other
$v$-dimensional vector space $V$ over $\F_q$) with the property that every
$t$-dimensional subspace of $\F_q^v$ (respectively, $V$) is contained
in exactly $\lambda$ members of $\mathcal{C}$.  The first nontrivial
examples of such designs were constructed by S.~Thomas
\cite{thomas87}. These ``Thomas designs'' have $q=2$ and form an
infinite family with parameters $2$-$(v,3,7)$, where
$v\equiv\pm1\pmod{6}$ and $v\geq 7$.  Taking the ambient space as the
finite field $\F_{2^v}$, one may construct the $2$-$(v,3,7)$ Thomas
design $\thomas_v$ as the set of all $3$-dimensional $\F_2$-subspaces
$\langle x,y,z\rangle\subset\F_{2^v}$ spanned by the $2^v-2$
non-rational points in $\PG(2,\F_{2^v})$ of a rational conic (relative
to $\F_2$). For example, we can take all points $(x:y:z)\neq(1:0:0)$,
$(0:1:0)$, $(0:0:1)$ on the conic $xy+yz+zx=0$, resulting in
$\thomas_v=\bigl\{\langle
x,y,\frac{xy}{x+y}\rangle;x,y\in\F_{2^v}^\times\text{
  distinct}\bigr\}$.\footnote{Checking
  the design property is somewhat tedious, but at least we can see
  immediately from the definition that $\thomas_v$ has the required
  $(2^v-2)/6\times(2^v-1)=(2^v-1)(2^{v-1}-1)/3$ blocks.}  Although
several further constructions of designs over finite fields are now
known (including the existence of nontrivial $t$-designs over $\F_q$
for arbitrarily large $t$ in \cite{fazeli-lovett-vardy13}), the
subject has turned out considerably more difficult than ordinary
combinatorial design theory. For example, no nontrivial $4$-design
over a finite field is known at present.

At the end of \cite{thomas87} Thomas briefly discussed $q$-analogues
$\steiner_q(t,k,v)$ of Steiner systems (i.e.\ $t$-$(v,k,1)$ designs
over $\F_q$) and in particular the smallest feasible parameter case
$\steiner_2(2,3,7)$. Such a $2$-analogue of the Fano plane would
consist of $381=3\times 127$ three-dimensional subspaces of $\F_2^7$
(cf.\ Lemma~\ref{lma:q-analog}), and it was conceivable to construct it as
the union of $3$ orbits of a Singer subgroup of $\GL(2,7)$. However,
as Thomas reported, this construction is impossible.

A few years ago interest in designs over finite fields was revived
through the observation by R.~Koetter and F.~Kschischang
\cite{koetter-kschischang08} that sets of subspaces of a vector space
over a finite field (\emph{subspace codes}) can be used as
``distributed channel codes'' for error-resilient transmission of
information in packet networks. Considering $q$ (\emph{symbol alphabet
  of the packet network}) and the ambient vector space dimension $v$
(\emph{packet length}) as fixed and restricting attention to
\emph{constant-dimension} codes (i.e\ the dimension $k$ of all
codewords is the same), the best performance is achieved by using
subspace codes $\mathcal{C}$ that have simultaneously large size
$\#\mathcal{C}=\abs{\mathcal{C}}$ and small maximum dimension of an
intersection between distinct codewords. Denoting this dimension by
$t-1$, we have that $t$ is the smallest positive integer such that every
$t$-dimensional subspace of $\F_q^v$ is contained in at most one
codeword of $\mathcal{C}$. Subspace codes thus satisfy a weaker form
of the defining condition for Steiner systems over finite
fields.\footnote{The difference is quite similar to that between
  \emph{linear spaces} (two distinct points are connected by exactly
  one line) and \emph{partial linear spaces} (two distinct points are
  connected by at most one line), as defined in Incidence
  Geometry. Subspace codes could thus be called ``partial Steiner
  systems over finite fields''.}  A standard double-counting argument
gives $\#\mathcal{C}\times\gauss{k}{t}{q}\leq\gauss{v}{t}{q}$ with
equality if and only if each $t$-dimensional subspace of $\F_q^v$ is
contained in precisely one codeword of $\mathcal{C}$. Hence Steiner
systems over finite fields are optimal as subspace
codes.\footnote{From this we also see that the parameters $q$, $t$,
  $k$, $v$ of an $\steiner_q(t,k,v)$, like those of an ordinary
  Steiner system, must obey certain \emph{integrality conditions}. In
  fact the existence of an $\steiner_q(t,k,v)$ implies the existence
  of an $\steiner_q(t-1,k-1,v-1)$. The so-called \emph{derived
    designs}, which are formed by the blocks through a fixed
  $1$-dimensional subspace of $\F_q^v$, have these parameters. Hence
  a necessary condition for the existence of an $\steiner_q(t,k,v)$ is
  that $\gauss{v-s}{t-s}{q}/\gauss{k-s}{t-s}{q}$ must be an integer
  for $1\leq s\leq t$.}

In the sequel we will exclusively be concerned with subspace codes of
constant dimension $k=3$, so-called \emph{plane subspace codes}, and
packet length $v=7$. Plane subspace codes with $t=3$ are trivial---the
whole plane set of $\PG(6,\F_q)$ forms such a code. Plane subspace
codes with $t=1$ consist of pairwise skew planes and are known as
\emph{partial plane spreads} in Finite Geometry. The maximum size of a
partial plane spread in $\PG(6,\F_q)$ is known to be $q^4+1$ from the
work of Beutelspacher \cite[Th.~4.1]{beutelspacher75}.\footnote{In
  general, the maximum size of a partial plane spread in $\PG(v-1,\F_q)$
  is known for $v\equiv 0,1\bmod{3}$ (all $q$) and for $q=2$ (all
  $v$); for the latter see
  \cite{el-zanati-jordon-seelinger-sissokho-spence10}.}
  This leaves the case $t=2$ considered so far as the only unresolved
  case. Restricting attention to this case, we will from now on
  tacitly assume that ``subspace code'' includes the assumption $t=2$.   

More than $25$ years have passed since Thomas' fundamental work
\cite{thomas87} and the existence problem for $q$-analogues
of the Fano plane is still undecided. 
On the other hand, serious attempts, often relying on quite
sophisticated computational methods, have been made to construct large
subspace codes---including the parameter set of a putative
$2$-analogue. These will now be briefly reviewed. Accordingly, let
$\mathcal{C}$ be a binary plane subspace code with $v=7$ or, in
geometric terms, a set of planes in $\PG(6,\F_2)$ mutually
intersecting in at most a point. As discussed above, we have
$\#\mathcal{C}\leq381$ with equality if and only if $\mathcal{C}$ is a
$2$-analogue of the Fano plane. The first nontrivial lower bound on
the maximum size of $\mathcal{C}$ was established by Koetter and
Kschischang \cite{koetter-kschischang08}, who showed that
$\#\mathcal{C}=256$ is realized by a so-called lifted maximum-rank
distance code (LMRD code). Kohnert and Kurz \cite{kohnert-kurz08}
improved this to $\#\mathcal{C}=304$, employing a computer search for
plane subspace codes in $\PG(6,\F_2)$ with an automorphism of order $21$
acting irreducibly on a hyperplane.  The current record is
$\#\mathcal{C}=329$ and was established by Braun and Reichelt in
\cite{braun-reichelt14} using a refinement of this method. In
\cite{smt:fq11proc}, as part of the classification of all optimal
plane subspace codes in the smaller geometry $\PG(5,\F_2)$, an optimal
$\#\mathcal{C}=77$ subspace code was constructed by first expurgating
an LMRD code (size $64$) to a particular subspace code of size $56$
and then augmenting this code by $21$ further planes. As shown in
\cite{lt:0328}, the underlying idea can be used to provide an
alternative construction of a plane subspace code of size $329$ in
$\PG(6,\F_2)$.

In this paper we will develop a general framework for constructing
large plane subspace codes in $\PG(6,\F_q)$ along the lines of
\cite{smt:fq11proc,lt:0328}, but also introducing several new ideas (in
Sections~\ref{sec:aelmrd} and~\ref{sec:attempt}). Our main results are
the construction of a general $q$-ary subspace code $\mathcal{C}$ of
size $q^8+q^5+q^4-q-1$, whose planes meet a fixed solid ($3$-flat) of
$\PG(6,\F_q)$ in at most a point (Theorem~\ref{thm:sickson} in
Section~\ref{sec:attempt}), and a detailed analysis of the extension
problem for $\mathcal{C}$ (or rather, a distinguished subcode
$\mathcal{C}_0\subset\mathcal{C}$) by planes meeting $S$ in a line,
which enables us to give the first computer-free construction of a
plane subspace code of size $329$ in $\PG(6,\F_2)$ (see above) and a
computer-aided construction of a plane subspace code of size $6977$ in
$\PG(6,\F_3)$ (Section~\ref{sec:ext}, in particular
Theorem~\ref{thm:final}). Theorem~\ref{thm:sickson} improves the best
previously known construction for general $q$
\cite{trautmann-rosenthal10}, and the ternary subspace code of size
$6977$ is by way the largest known code with its parameters. In order
to make the paper self-contained, we provide a general introduction to
the combinatorics of subspace codes in Section~\ref{sec:counting} and
an account of related previous subspace code constructions in
Section~\ref{sec:almrd}.

In the sequel $\mathcal{F}_q$ always denotes a putative $q$-analogue of the
Fano plane. The term ``dimension'' refers to vector space
dimension, but otherwise geometric language will be extensively used. 
When referring to the geometric dimension of a $t$-dimensional
subspace of $\F_q^v/\F_q$, we use the term ``\emph{($t-1$)-flat of
  $\PG(v-1,\F_q)$}''.

Let us close this introduction with a remark on vector space analogues of
the Fano plane over infinite fields. Using transfinite recursion, it
is fairly easy to show that for any field $K$ with $\abs{K}=\infty$ a
$K$-analogue $\mathcal{F}_K$, defined as in Def.~\ref{dfn:main}, does
exist. For example, in the case $K=\Q$ we can enumerate the lines of
$\PG(6,\Q)$ as $L_0,L_1,L_2,\dots$ and recursively define sets
$\mathcal{E}_0=\emptyset$, $\mathcal{E}_1,\mathcal{E}_2,\dots$ of
planes as follows: If $\mathcal{E}_n$ already contains a plane
$E\supset L_n$, we set $\mathcal{E}_{n+1}=\mathcal{E}_n$; otherwise,
among the planes containing $L_n$ there exists a plane $E$ that has no
line in common with any of the planes in $\mathcal{E}_n$, and we set
$\mathcal{E}_{n+1}=\mathcal{E}_n\cup\{E\}$.\footnote{More precisely,
  the first plane with this property, according to some predefined
  order $E_0,E_1,E_2,\dots$ on the set of planes of $\PG(6,\Q)$, is
  chosen. The existence of such a plane follows from the fact that
  $L_n$ and the finitely many solids ($3$-flats) $E'+L_n$,
  $E'\in\mathcal{E}_n$ a plane intersecting $L_n$ (necessarily in a
  point), cannot cover all points of $\PG(6,\Q)$.}  It is then readily
verified that $\mathcal{F}_\Q=\bigcup_{n=0}^\infty\mathcal{E}_n$ is
the required $\Q$-analogue of $\mathcal{F}$. 

In fact it is even true that the plane set
of any geometry $\PG(v-1,K)$, $\abs{K}=\infty$, $v\geq 5$, can be
partitioned into Steiner systems $\steiner_K(2,3,v)$; see
\cite{cameron95c} for details.

\section{Counting Preliminaries}\label{sec:counting}

Let us first recall that the number of $k$-dimensional subspaces of an
$n$-dimensional vector space over $\F_q$ is given by the Gaussian
binomial coefficient
\begin{equation*}
  \gauss{n}{k}{q}=\frac{(q^n-1)(q^{n-1}-1)\dotsm(q^{n-k+1}-1)}{(q^k-1)(q^{k-1}-1)
    \dotsm(q-1)},
\end{equation*}
which is polynomial in $q$ of degree $k(n-k)$ and satisfies
$\gauss{n}{k}{q}=\gauss{n}{n-k}{q}=q^{k(n-k)}\cdot\gauss{n}{k}{q^{-1}}$.
In particular the number of points (and hyperplanes) of
$\PG(n-1,\F_q)$ is equal to
\begin{equation*}
  \gauss{n}{1}{q}=\gauss{n}{n-1}{q}=\frac{q^n-1}{q-1}=1+q+\dots+q^{n-1}.
\end{equation*}
Subspaces $U$ of $\F_q^n/\F_q$ of dimension $k$ are in one-to-one
correspondence with matrices $\mat{U}=\cm(U)\in\F_q^{k\times n}$ in
reduced row-echelon form via $U=\langle\cm(U)\rangle$, the row space
of the matrix $\cm(U)$, and
$\mat{U}=\cm(\langle\mat{U}\rangle)$.\footnote{The name 'cm'
  resembles ``canonical matrix''.} If $\cm(U)$ has pivot columns in
positions $1\leq j_1<j_2<\dots<j_k\leq n$ then the number of
unspecified entries (``wildcards'') in $\cm(U)$ is
$i=1(j_2-j_1-1)+2(j_3-j_2-1)+\dots+(k-1)(j_k-j_{k-1}-1)+k(n-j_k)$ and
determines a partition of the integer $i$ into at most $n-k$ parts of
size at most $k$.\footnote{The number of (positive) parts is
  $\sum_{\nu=1}^{k-1}(j_{\nu+1}-j_\nu-1)+n-j_k=n-k-(j_1-1)$.} The
coefficient $a_i$ of $q^i$ in $\gauss{n}{k}{q}$ counts the number of
such partitions, and consequently the monomial $a_iq^i$ counts the
$k$-dimensional subspaces of $\F_q^n$ having exactly $i$ unspecified
entries in their canonical matrix.

These and a few additional observations allow for ``almost
everything'' in $\PG(n-1,\F_q)=\PG(\F_q^n/\F_q)$ to be counted. Consider,
for example, any solid ($3$-flat) $S$ in $\PG(6,\F_q)$ and count
the planes of $\PG(6,\F_q)$ according to their intersection size with
$S$.  There are $q^{12}$ planes disjoint from $S$, corresponding to
the $q^{12}$ canonical matrices
\begin{equation*}
  \begin{pmatrix}
    1&0&0&*&*&*&*\\
    0&1&0&*&*&*&*\\
    0&0&1&*&*&*&*
  \end{pmatrix}
\end{equation*}
(for this arrange coordinates such that $S=(0,0,0,*,*,*,*)$); there
are $q^6\cdot\gauss{3}{2}{q}\gauss{4}{1}{q}=q^6(q^2+q+1)
(q^3+q^2+q+1)$ planes $E$ meeting $S$ in a point (considering the
hyperplane $H=E+S$ and the intersection point $P=E\cap S$ as fixed,
these correspond to lines disjoint from the plane $S/P$ in
$H/P\cong\PG(4,\F_q)$, of which there are $q^6$ corresponding to the
canonical matrix shape $\left(\begin{smallmatrix}
    1&0&*&*&*\\
    0&1&*&*&*
\end{smallmatrix}\right)$); there are
$q^2\cdot\gauss{3}{1}{q}\gauss{4}{2}{q}=q^2(q^2+q+1)(q^4+q^3+2q^2+q+1)$
planes $E$ meeting $S$ in a line (considering the $4$-flat $T=E+S$
and the line $L=E\cap S$ as fixed, these correspond to points
outside the line $S/L$ in $T/L\cong\PG(2,\F_q)$);\footnote{Here we
  have used $\gauss{4}{2}{q}=q^4+q^3+2q^2+q+1$, which follows from
  counting the partitions into at most $2$ parts of size $\leq 2$
  according to their sum: $0=0$, $1=1$, $2=2=1+1$, $3=2+1$, $4=2+2$.}
and finally, there are $\gauss{4}{3}{q}=\gauss{4}{1}{q}
=q^3+q^2+q+1$ planes contained in $S$.

Now let $\mathcal{C}$ be a set of planes in $\PG(6,\F_q)$ mutually
intersecting in at most a point (a plane subspace code in the
terminology of Section~\ref{sec:intro}). Fixing any solid $S$ in
$\PG(6,\F_q)$, we can count how many planes in $\mathcal{C}$ intersect
$S$ in a subspace of dimension $i\in\{0,1,2,3\}$. This leads to the
concept of ``spectra'' (or ``intersection vectors'') with respect to
solids, which already capture a great deal of structural information about
$\mathcal{C}$.

\begin{definition}
  \label{dfn:alpha}
  The \emph{spectrum} (or \emph{intersection vector}) of $\mathcal{C}$
  with respect to $S$ is defined as the $4$-tuple
  $\alpha(S)=\bigl(\alpha_0(S),\alpha_1(S),\alpha_2(S),\alpha_3(S)\bigr)$,
  $\alpha_i(S)=\#\{E\in\mathcal{C};\dim(E\cap S)=i\}$, of
  non-negative integers.
\end{definition}
The example counting problem discussed above amounts to determining the
spectrum of the whole plane set of $\PG(6,\F_q)$ with
respect to any solid, which turned out to be a constant independent of
$S$.\footnote{The latter also follows from the observation that
  $\GL(7,\F_q)$ acts transitively on the set of all plane-solid pairs
  $(E,S)$ with fixed intersection dimension $i$.} 


\begin{lemma}
  \label{lma:alpha}
  Let $\mathcal{C}$ be a plane subspace code of size $M$ in $\PG(6,\F_q)$
  and $S$ any solid in $\PG(6,\F_q)$.
  The spectrum $\alpha=\alpha(S)$ of $\mathcal{C}$ with respect to $S$
  satisfies $\alpha_0+\alpha_1+\alpha_2+\alpha_3=M$ and the following system of
  linear inequalities:
  \begin{equation*}
    \label{eq:alpha}
    \begin{array}{rcrcrcrcl}
      \gauss{3}{1}{q}\cdot\alpha_0&+&q^2\alpha_1&&&&
      &\leq&q^8\cdot\gauss{3}{1}{q}\\
      &&(q+1)\alpha_1&+&(q^2+q)\alpha_2&&
      &\leq&q^3\cdot\gauss{3}{1}{q}\cdot\gauss{4}{1}{q}\\
      &&&&\alpha_2&+&\gauss{3}{1}{q}\cdot\alpha_3&\leq&\gauss{4}{2}{q}\\
      &&&&&&\alpha_3&\leq&1
    \end{array}
  \end{equation*}
\end{lemma}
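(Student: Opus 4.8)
The plan is to derive each of the four inequalities by a local double-counting argument: fix a suitable flat in $\PG(6,\F_q)$ near $S$, count incidences between that flat and the planes of $\mathcal{C}$ contributing to a given $\alpha_i$, and use the defining property of a plane subspace code (any line lies in at most one codeword, equivalently distinct codewords meet in at most a point) to bound the count. The equation $\alpha_0+\alpha_1+\alpha_2+\alpha_3=M$ is immediate, since every plane of $\mathcal{C}$ meets $S$ in a subspace of dimension $0,1,2$, or $3$, and these cases are mutually exclusive. The last inequality $\alpha_3\leq 1$ is equally quick: two distinct planes contained in $S$ (a solid) intersect in a line, which is forbidden.

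For the third inequality I would work inside $S\cong\PG(3,\F_q)$ and count the \emph{lines} of $S$. A plane $E\in\mathcal{C}$ with $\dim(E\cap S)=2$ contributes the single line $E\cap S$; a plane $E\in\mathcal{C}$ with $E\subseteq S$ contributes all $\gauss{3}{1}{q}$ of its lines. Because distinct codewords meet in at most a point, no line of $S$ can arise twice from this list (a repeated line would lie in two distinct codewords). Hence $\alpha_2+\gauss{3}{1}{q}\cdot\alpha_3\leq\#\{\text{lines of }\PG(3,\F_q)\}=\gauss{4}{2}{q}$, which is exactly the claimed bound.

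The first two inequalities are the substantive ones, and I expect the bookkeeping there to be the main obstacle, since one must identify the right intermediate object to count and invoke the right one of the ambient-space counts performed just before the lemma. For the second inequality, consider a fixed $4$-flat $T\supseteq S$; by the counting above, $\PG(6,\F_q)$ contains $\gauss{3}{1}{q}\gauss{4}{1}{q}$ such $4$-flats. Inside $T/S\cong$ a line at infinity one counts how codewords with $\dim(E\cap S)=1$ or $2$ sit relative to $T$: a plane meeting $S$ in a line $L$ and contained in $T$ contributes, while a plane meeting $S$ in a point and spanning $T$ with $S$ contributes as well; translating into $T/L\cong\PG(2,\F_q)$ (for the $\dim=2$ case) and into $T/P$ (for the $\dim=1$ case) and using that no line of $T$ lies in two codewords gives a bound of the form $(q+1)\alpha_1+(q^2+q)\alpha_2\leq(\text{number of admissible configurations per }T)\times(\text{number of }T)$, and the right-hand side collapses to $q^3\cdot\gauss{3}{1}{q}\cdot\gauss{4}{1}{q}$. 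For the first inequality one fixes instead a hyperplane $H\supseteq S$ (there are $\gauss{4}{1}{q}$ of them) and counts, inside $H/S\cong$ a plane, the lines disjoint from the solid; a plane $E\in\mathcal{C}$ disjoint from $S$ with $E+S=H$ projects to a line of $H/S$ disjoint from the point $S/S$... more precisely one quotients by $S$ to land in $H/S\cong\PG(2,\F_q)$ and counts how $\alpha_0$- and $\alpha_1$-planes meet the generic hyperplane through $S$, with the coefficient $\gauss{3}{1}{q}$ on $\alpha_0$ recording that each plane disjoint from $S$ lies in $\gauss{3}{1}{q}$ hyperplanes through $S$ wait—rather in the hyperplanes $E'+S$; I would set it up so that the weighted incidence count is bounded by $q^8\cdot\gauss{3}{1}{q}$, the total number of relevant geometric objects, again because the plane-subspace-code property forbids any such object from being used twice. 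In each of the four cases the skeleton is identical: \emph{choose the right flat through $S$, count local contributions with multiplicities, bound by the ambient total, invoke ``at most one codeword per line''}; the only delicate point is getting the multiplicities $\gauss{3}{1}{q}$, $q^2$, $q+1$, $q^2+q$ to match, which is precisely the arithmetic of the ambient counts recorded immediately before the lemma.
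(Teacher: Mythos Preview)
Your treatment of the equation $\alpha_0+\alpha_1+\alpha_2+\alpha_3=M$, of $\alpha_3\leq 1$, and of the third inequality is correct and coincides with the paper's argument. The difficulty is that you do not carry the same idea through to the first two inequalities, and the alternative route you sketch there does not work.

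Concretely, your setup for the first two inequalities has dimension errors that are not cosmetic. For the second inequality you fix a $4$-flat $T\supseteq S$ and speak of planes $E$ with $\dim(E\cap S)=1$ ``spanning $T$ with $S$''; but if $\dim(E\cap S)=1$ then $\dim(E+S)=3+4-1=6$, so $E+S$ is a hyperplane, never a $4$-flat. (Also there are only $\gauss{3}{1}{q}$, not $\gauss{3}{1}{q}\gauss{4}{1}{q}$, $4$-flats through $S$, and $T/S$ is a projective point, not a line.) For the first inequality you fix a hyperplane $H\supseteq S$ and speak of planes $E$ disjoint from $S$ with $E+S=H$; but if $E\cap S=0$ then $E+S$ is all of $\F_q^7$, so no such $E$ sits inside any hyperplane through $S$. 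The ``project to $H/S$'' strategy therefore has nothing to project. These are not bookkeeping slips: the flats-through-$S$ framework you propose simply does not capture the $\alpha_0$- and $\alpha_1$-planes.

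The fix is to do for $i=0$ and $i=1$ exactly what you did for $i=2$: count the lines $L$ of $\PG(6,\F_q)$ with $\dim(L\cap S)=i$ that are covered by some $E\in\mathcal{C}$. Each such line lies in at most one codeword, so the total is at most the number of lines with $\dim(L\cap S)=i$, namely $q^8\gauss{3}{1}{q}$ for $i=0$ and $q^3\gauss{3}{1}{q}\gauss{4}{1}{q}$ for $i=1$. On the left-hand side, a plane $E$ with $\dim(E\cap S)=0$ contains $\gauss{3}{1}{q}$ lines disjoint from $S$ and none meeting $S$; a plane with $\dim(E\cap S)=1$ contains $q^2$ lines disjoint from $S$ (those missing the point $E\cap S$) and $q+1$ lines meeting $S$ in a point; a plane with $\dim(E\cap S)=2$ contains no lines disjoint from $S$ and $q^2+q$ lines meeting $S$ in a point (all lines of $E$ other than $E\cap S$). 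This yields precisely the stated coefficients, and is the paper's proof.
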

The explicit form of all four inequalities is obtained
by inserting $\gauss{3}{1}{q}=q^2+q+1$, $\gauss{4}{1}{q}=q^3+q^2+q+1$ and
$\gauss{4}{2}{q}=q^4+q^3+2q^2+q+1=(q^2+1)(q^2+q+1)$.
\begin{proof}
  The equation $\alpha_0+\alpha_1+\alpha_2+\alpha_3=M$ is clear
  from the definition of the spectrum. The first three inequalities are proved
  by counting the line-plane pairs $(L,E)$ with $E\in\mathcal{C}$,
  $L\subset E$ and $\dim(L\cap S)=i$ for $i=0,1,2$, respectively, in
  two ways and using the fact that every line is contained in at most
  one plane of $\mathcal{C}$ (and hence counted at most once on the
  left-hand side). The right-hand side of the corresponding inequality
  gives the total number of lines $L$ with $\dim(L\cap S)=i$.
  Finally, since two distinct planes of $\mathcal{C}$ generate an at
  least $5$-dimensional space, $S$ can contain at most one plane of
  $\mathcal{C}$ and thus $\alpha_3\in\{0,1\}$.
\end{proof}
Lemma~\ref{lma:alpha} can be used to derive quite restrictive
conditions on the parameters of a putative $q$-analogue of the Fano
plane. This is the subject of Lemma~\ref{lma:q-analog}. For the
statement of the lemma recall that the cyclotomic polynomials
$\cpol_n(X)\in\Z[X]$, defined recursively by $X^n-1=\prod_{d\mid
  n}\cpol_d(X)$ for $n\in\N$, satisfy
$\cpol_p(X)=X^{p-1}+X^{p-2}+\dots+X+1$ for prime numbers $p$, as well
as $\cpol_6(X)=X^2-X+1$. In terms of cyclotomic polynomials the number
of points of $\PG(n-1,\F_q)$ is
$\gauss{n}{1}{q}=\frac{q^n-1}{q-1}=\prod_{d\mid n,d\neq 1}\cpol_d(q)$.
\begin{lemma}
  \label{lma:q-analog}
  If a $q$-analogue $\mathcal{F}_q$ of the Fano plane exists, it must
  have the following properties:
  \begin{enumerate}[(i)]
  \item The number of planes in $\mathcal{F}_q$ is
    \begin{align*}
      \#\mathcal{F}_q
      &=\cpol_7(q)\cpol_6(q)
      =(q^6+q^5+q^4+q^3+q^2+q+1)(q^2-q+1)\\&=q^8+q^6+q^5+q^4+q^3+q^2+1,
      \end{align*}
      with $\cpol_{6}(q)\cpol_3(q)
      =q^4+q^2+1$ planes passing through each point of $\PG(6,\F_q)$.
  \item The spectrum of $\mathcal{F}_q$ with respect to solids
    takes the two values
    \begin{align*}
      \alpha_0&=(q^8-q^7+q^3,q^7+q^6+q^5-q^3-q^2-q,q^4+q^3+2q^2+q+1,0),\\
      \alpha_1&=(q^8-q^7,q^7+q^6+q^5,q^4+q^3+q^2,1)
    \end{align*}
    with corresponding frequencies
    \begin{align*}
      f_0&=q^{12}+q^{10}+q^9+q^8+q^7+q^6+q^4,\\
      f_1&=q^{11} + q^{10} + 2q^{9} + 3q^{8} + 3q^{7} + 4q^{6} 
      + 4q^{5} + 3q^{4} + 3q^{3} + 2q^{2} + q + 1.
    \end{align*}
  \end{enumerate}
\end{lemma}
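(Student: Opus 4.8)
The plan is to derive both parts by double counting, leaning on Lemma~\ref{lma:alpha} for the spectra.

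For part~(i), I would count incident (line, plane) pairs: every line of $\PG(6,\F_q)$ lies in exactly one member of $\mathcal{F}_q$ and every plane contains $\gauss{3}{2}{q}=\gauss{3}{1}{q}=q^2+q+1$ lines, so $\#\mathcal{F}_q\cdot\gauss{3}{2}{q}=\gauss{7}{2}{q}$, whence $\#\mathcal{F}_q=\gauss{7}{2}{q}/\gauss{3}{2}{q}=(q^7-1)(q^6-1)/\bigl((q^2-1)(q^3-1)\bigr)$. Writing $q^n-1=\prod_{d\mid n}\cpol_d(q)$ and cancelling common factors leaves $\cpol_7(q)\cpol_6(q)$, which multiplies out to the claimed polynomial. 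The number of members of $\mathcal{F}_q$ through a fixed point $P$ is obtained the same way in the ``derived'' setting: such a plane carries $\gauss{2}{1}{q}=q+1$ of the $\gauss{6}{1}{q}$ lines through $P$, and each such line lies in exactly one member of $\mathcal{F}_q$ (necessarily passing through $P$), so the count is $\gauss{6}{1}{q}/\gauss{2}{1}{q}=(q^6-1)/(q^2-1)=q^4+q^2+1=\cpol_6(q)\cpol_3(q)$.

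For part~(ii), fix a solid $S$ and put $\alpha=\alpha(S)$. First I would record two facts. (a)~Two distinct members of $\mathcal{F}_q$ meet in at most a point --- a common line would lie in two members --- hence they span at least a $5$-flat, and the $4$-flat $S$ contains at most one member of $\mathcal{F}_q$, i.e.\ $\alpha_3\in\{0,1\}$. (b)~In the proof of Lemma~\ref{lma:alpha} the left-hand side of the $i$-th inequality ($i=0,1,2$) is the number of incident (line, plane) pairs $(L,E)$ with $E\in\mathcal{F}_q$ and $\dim(L\cap S)=i$; since $\mathcal{F}_q$ is a Steiner system, \emph{every} line lies in exactly one member, so this count equals the total number of lines $L$ with $\dim(L\cap S)=i$, i.e.\ all three inequalities of Lemma~\ref{lma:alpha} become equalities. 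The coefficient matrix of the resulting $3\times 4$ linear system is in echelon form with pivots in the $\alpha_0,\alpha_1,\alpha_2$ columns, so its solution set is a line parametrised by $\alpha_3$; back-substituting ($\alpha_2=\gauss{4}{2}{q}-\gauss{3}{1}{q}\alpha_3$, then $\alpha_1$, then $\alpha_0$) and setting $\alpha_3=0$ and $\alpha_3=1$ produces exactly the two $4$-tuples $\alpha_0,\alpha_1$ of the statement; as a check, $\alpha_0+\alpha_1+\alpha_2+\alpha_3=\#\mathcal{F}_q$ holds in both cases, consistently with part~(i). Thus $\alpha(S)=\alpha_0$ when $S$ contains no member of $\mathcal{F}_q$ and $\alpha(S)=\alpha_1$ when it contains one.

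It remains to determine the frequencies. A solid is of type $\alpha_1$ exactly when it contains a member of $\mathcal{F}_q$; each of the $\#\mathcal{F}_q$ planes lies in $\gauss{4}{1}{q}$ solids, and by~(a) no solid contains two members, so $f_1=\#\mathcal{F}_q\cdot\gauss{4}{1}{q}$, which expands to the claimed degree-$11$ polynomial. Then $f_0=\gauss{7}{3}{q}-f_1$ is the number of the remaining solids; expanding $\gauss{7}{3}{q}$ (palindromic of degree $12$, the $q^i$-coefficient being the number of partitions of $i$ into at most $4$ parts of size at most $3$) and subtracting gives $q^{12}+q^{10}+q^9+q^8+q^7+q^6+q^4$. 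The whole argument is essentially bookkeeping, with no conceptual obstacle; the one point to state carefully is fact~(b), that the inequalities of Lemma~\ref{lma:alpha} are equalities for a Steiner system, since this (together with~(i)) makes the system exactly determined up to the binary choice of $\alpha_3$.
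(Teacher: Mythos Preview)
Your proposal is correct and follows essentially the same route as the paper: for~(ii) you invoke Lemma~\ref{lma:alpha}, note that the three inequalities become equalities for a Steiner system, and solve the triangular system for $\alpha_3\in\{0,1\}$, then obtain $f_1=\#\mathcal{F}_q\cdot\gauss{4}{1}{q}$ and $f_0=\gauss{7}{3}{q}-f_1$ exactly as the paper does. The one minor difference is in~(i): you give a direct double count of (line, plane) pairs to get $\#\mathcal{F}_q=\gauss{7}{2}{q}/\gauss{3}{2}{q}$ (and similarly for the number through a point), whereas the paper leaves~(i) implicit in the spectrum computation, recovering $\#\mathcal{F}_q$ as $\alpha_0+\alpha_1+\alpha_2+\alpha_3$; both are standard and equivalent in spirit.
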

\begin{proof}
  For a $q$-analogue of the Fano plane 
  the first three inequalities
  in Lemma~\ref{lma:alpha} are in fact equalities (for any solid $S$)
  and, conversely, this property (even if it holds only for one
  particular solid $S$) implies that $\mathcal{C}$ must be a
  $q$-analogue of the Fano plane.

  Further, the triangular shape of the system
  implies that each of the two possible choices $\alpha_3\in\{0,1\}$ leads to
  a unique solution for $\alpha_1,\alpha_2,\alpha_3$.

  In the first case ($\alpha_3=0$) we obtain
  \begin{align*}
    \alpha_2&=q^4+q^3+2q^2+q+1=\cpol_4(q)\cpol_3(q),\\
    \alpha_1&=\frac{1}{q+1}\left(q^3\gauss{3}{1}{q}\gauss{4}{1}{q}
      -q(q+1)\alpha_2\right)\\
    &=\frac{1}{q+1}\left(q^3\cdot\cpol_3(q)\cdot\cpol_4(q)\cpol_2(q)
      -q\cdot\cpol_2(q)\cdot\cpol_4(q)\cpol_3(q)\right)\\
    &=(q^3-q)\cpol_4(q)\cpol_3(q)
    =q\cdot\cpol_4(q)\cpol_3(q)\cpol_2(q)\cpol_1(q)\\
    &=q(q^4-1)(q^2+q+1)=q^7+q^6+q^5-q^3-q^2-q,\\
    \alpha_0&=q^8-\frac{q^2}{q^2+q+1}\cdot\alpha_1=q^8-q^7+q^3,
  \end{align*}
  as asserted. The second case ($\alpha_3=1$) is done similarly.
  
  Finally, a solid $S$ of $\PG(6,\F_q)$ has $\alpha_3(S)=1$ iff it
  contains a plane of $\mathcal{F}_q$. The number of such solids is
  \begin{align*}
    f_1&=\#\mathcal{F}_q\cdot\gauss{4}{1}{q}
    =\cpol_7(q)\cpol_6(q)\cpol_4(q)\cpol_2(q)\\
    &=q^{11} + q^{10} + 2q^{9} + 3q^{8} + 3q^{7} + 4q^{6} 
    + 4q^{5} + 3q^{4} + 3q^{3} + 2q^{2} + q + 1,
  \end{align*}
  and the number of solids with $\alpha_3(S)=0$ is
  \begin{align*}
    f_0&=\gauss{7}{4}{q}-f_1=\gauss{7}{3}{q}-f_1\\
    &=\cpol_7(q)\cpol_6(q)\cpol_5(q)-\cpol_7(q)\cpol_6(q)\cpol_4(q)\cpol_2(q)\\
    &=\#\mathcal{F}_q\cdot q^4=q^{12}+q^{10}+q^9+q^8+q^7+q^6+q^4,
  \end{align*}
  completing the proof.
\end{proof}
\begin{remark}
  \label{rmk:q-analog}
  More general results on the intersection structure of a putative
  $q$-analogue of the Fano plane can be found in
  \cite[Sect.~4]{kiermaier-pavcevic14}.
  
  Performing the same computations, mutatis mutandis, for putative Steiner
  systems $\steiner_q(2,3,v)$ with arbitrary ambient space dimension
  $v$ yields non-integral solutions and hence excludes the existence
  of an $\steiner_q(2,3,v)$ for $v\equiv 0,2,4,5\pmod{6}$. Thus an
  $\steiner_q(2,3,v)$ can exist only for
  $v\in\{7,9,13,15,19,21,25,27,\dots\}$. For the particular case
  $q=2$, $v=13$ existence 
  has been proved in \cite{braun-etal13}, providing the only known
  nontrivial example of a Steiner system over a finite field. 
  This remarkable result was the outcome of a computer search for
  Steiner systems $\steiner_2(2,3,13)$ invariant under the normalizer
  of a Singer subgroup of $\GL(13,\F_2)$, a group of order
  $(2^{13}-1)\cdot 13=106483$, and of course facilitated by the fact that
  Steiner systems $\steiner_2(2,3,13)$ with this additional structure
  exist.\footnote{An $\steiner_2(2,3,13)$ contains as many as
    $\frac{(2^{13}-1)(2^{12}-1)}{21}
    =1597245$ planes out of a total of $\gauss{13}{3}{2}=3269560515$
    planes in $\PG(12,\F_2)$, rendering any unrestricted search for such
    a structure completely infeasible.} 
\end{remark}


\section{Augmented LMRD Codes}\label{sec:almrd}

The initial subspace code constructions by Koetter, Kschischang and
Silva \cite{koetter-kschischang08,silva-kschischang-koetter08} were
based on the observation that the dimension of the intersection of two
$k$-dimensional subspaces $U,V$ of $\F_q^v/\F_q$ with canonical
matrices of the special form $(\imat_k|\mat{A})$, $(\imat_k|\mat{B})$
can be expressed through the rank of the matrix
$\mat{A}-\mat{B}\in\F_q^{k\times(v-k)}$. In fact it is easily seen
that $U\cap
V=\bigl\{(\vek{x}|\vek{x}\mat{A});\vek{x}\in\kernel(\mat{A}-\mat{B})\bigr\}
\cong\kernel(\mat{A}-\mat{B})$ (the left kernel of $\mat{A}-\mat{B}$)
and thus $\dim(U\cap V)=k-\rank(\mat{A}-\mat{B})$.

From earlier work of Delsarte \cite{delsarte78a} (and independently
Gabidulin and Roth \cite{gabidulin85,roth91}) the maximum number of
matrices in $\F_q^{m\times n}$ having pairwise rank distance at least
$d$ is known to be $q^{(m-d+1)n}$, provided that $m\leq
n$.\footnote{The assumption $m\leq n$ imposes no essential
  restriction, since matrices can be transposed without changing the
  rank.}  Subsets $\mathcal{A}\subseteq\F_q^{m\times n}$ of size
$q^{(m-d+1)n}$ with $\rank(\mat{A}-\mat{B})\geq d$ for all pairs of
distinct $\mat{A},\mat{B}\in\mathcal{A}$ are known as $(m,n,m-d+1)$
\emph{maximum rank distance (MRD) codes}. Via the \emph{lifting
  construction}
$\mathcal{A}\to\mathcal{L}\subseteq\F_q^{m\times(m+n)}$,
$\mat{A}\mapsto\langle(\imat_m|\mat{A})\rangle$ they give rise to
subspace codes $\mathcal{L}$ in $\PG(m+n-1,\F_q)$ of size
$\#\mathcal{L}=\#\mathcal{A}=q^{(m-d+1)n}$, constant dimension $m$ and
maximum intersection dimension $m-d$, as we have indicated above. These
subspace codes are called \emph{lifted maximum rank distance (LMRD) codes}.

In the case of interest to us we can find $q^8$ matrices in
$\F_q^{3\times 4}$ at pairwise rank distance $\geq 2$ and lift these
to a plane LMRD code in $\PG(6,\F_q)$ of size $q^8$ with maximum
intersection dimension $1$. This gives the lower bound
$\#\mathcal{C}\geq q^8$ for the maximum size of a plane subspace code
in $\PG(6,\F_q)$, which is already of the same asymptotic
order as a putative $2$-analogue of the Fano plane
($\#\mathcal{F}_q=q^8+q^6+q^5+q^4+q^3+q^2+1$).

Following the work in
\cite{koetter-kschischang08,silva-kschischang-koetter08}, several
constructions have been proposed for augmenting LMRD codes without
increasing $t$. (Note that increasing $t$ sacrifices the
error-correction capabilities of the original subspace code.) All
these constructions are variants of the so-called
\emph{echelon-Ferrers} construction introduced in
\cite{etzion-silberstein09}, which combines subspace codes in
different Schubert cells of the corresponding Grassmannian in a
certain way.\footnote{``Schubert cell'' refers to the set of all
  subspaces whose canonical matrices have their pivot columns fixed.}
We will not delve into this further, but instead only mention that the
maximum size of an augmented LMRD code obtained in this way is
$\#\mathcal{C}=q^8+\gauss{4}{2}{q}=q^8+q^4+q^3+2q^2+q+1$ and provide a
different construction of such a code below.

In fact the bound $\#\mathcal{C}\leq q^8+\gauss{4}{2}{q}$ holds for
any plane subspace code in $\PG(6,\F_q)$ containing an LMRD code. This
is a consequence of the following lemma, which could be easily
generalized to arbitrary packet length $v$.
\begin{lemma}
  \label{lma:lmrd}
  Let $\mathcal{L}$ be a plane LMRD code in $\PG(6,\F_q)$ and
  $S=(0,0,0,*,*,*,*)$ the special solid defined by $x_1=x_2=x_3=0$.
  Then the planes in $\mathcal{L}$ cover all lines that are disjoint from
  $S$ (and no other lines). 
\end{lemma}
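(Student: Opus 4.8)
The plan is to combine a one-line containment check with a line-counting argument.

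First I would dispose of the parenthetical claim. Each $E\in\mathcal{L}$ has canonical matrix of the form $(\imat_3\mid\mat{A})$ with $\mat{A}\in\F_q^{3\times 4}$, so any vector of $E$ whose first three coordinates vanish---that is, any vector of $E\cap S$---is the zero vector; hence $E\cap S=\{0\}$, and every line $L\subseteq E$ satisfies $L\cap S=\{0\}$ as well. Thus no line meeting $S$ is covered by $\mathcal{L}$.

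For the converse I would count lines. On one side, $\#\mathcal{L}=q^8$, each plane of $\mathcal{L}$ contains exactly $\gauss{3}{1}{q}$ lines, and distinct planes of $\mathcal{L}$ meet in at most a point (the LMRD code has maximum intersection dimension $1$) and hence share no line; so the planes of $\mathcal{L}$ cover exactly $q^8\gauss{3}{1}{q}$ pairwise distinct lines. On the other side, the number of lines of $\PG(6,\F_q)$ disjoint from $S$ equals $\frac{(q^7-q^4)(q^7-q^5)}{(q^2-1)(q^2-q)}=q^8\gauss{3}{1}{q}$, which one may also obtain by double counting against the $q^{12}$ planes disjoint from $S$ (those of shape $(\imat_3\mid\ast)$ from Section~\ref{sec:counting}, each line disjoint from $S$ lying in exactly $q^4$ of them). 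By the first part the $q^8\gauss{3}{1}{q}$ lines covered by $\mathcal{L}$ are all disjoint from $S$, and since this is exactly the total number of such lines, they must be all of them.

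I do not expect a genuine obstacle; the only minor points requiring care are the (immediate) fact that $E\cap S=\{0\}$ for every $E\in\mathcal{L}$ and the verification that the two expressions for the number of lines disjoint from $S$ coincide. Should a count-free proof be preferred, one can argue directly: a line $L$ disjoint from $S$ projects isomorphically onto a $2$-flat $U$ of $\PG(2,\F_q)$ along the projection with kernel $S$; the planes disjoint from $S$ through $L$ form a coset of the $4$-dimensional space of $3\times 4$ matrices vanishing on $U$, every nonzero member of which has rank $1$; the MRD property forbids two codewords of $\mathcal{L}$ in one such coset; and balancing incidences between the $q^8$ codewords---each lying in $\gauss{3}{1}{q}$ such cosets, one per $2$-dimensional $U\subseteq\F_q^3$---and the $q^8\gauss{3}{1}{q}$ cosets themselves forces exactly one codeword per coset, i.e.\ exactly one plane of $\mathcal{L}$ through each line disjoint from $S$.
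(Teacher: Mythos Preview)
Your counting proof is correct and takes a genuinely different route from the paper. The paper argues constructively: writing the canonical matrix of a line $L$ disjoint from $S$ as $(\mat{Z}\mid\mat{B})$ with $\mat{Z}\in\F_q^{2\times 3}$ of rank~$2$, it observes that the map $\mat{A}\mapsto\mat{Z}\mat{A}$ from the underlying MRD code $\mathcal{A}$ to $\F_q^{2\times 4}$ is injective (a collision would force $\rank(\mat{A}-\mat{A}')\leq 1$), hence bijective since both sides have size $q^8$; the preimage of $\mat{B}$ then furnishes a plane of $\mathcal{L}$ containing $L$. Your primary argument replaces this with a global pigeonhole, which is arguably slicker but non-constructive. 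Your alternative ``count-free'' sketch is in fact essentially the paper's proof: your cosets of $\{\mat{A}:\mat{Z}\mat{A}=0\}$ are exactly the fibres of the paper's map, and ``at most one codeword per coset plus incidence balancing'' is ``injective plus equal cardinality'' in disguise. (One small slip there: the projection $U$ of $L$ is a $2$-dimensional subspace of $\F_q^3$, i.e.\ a $1$-flat of $\PG(2,\F_q)$, not a $2$-flat.) The paper's version has the modest advantage of making explicit the restriction correspondence $g=f|_Z$ that is used repeatedly in Section~\ref{sec:aelmrd}.
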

\begin{proof}
  A line $L$ disjoint from $S$ has a canonical matrix of the form
  $(\mat{Z}|\mat{B})$ with $\mat{Z}\in\F_q^{2\times 3}$ in canonical
  form and $\mat{B}\in\F_q^{2\times 4}$ arbitrary. Now let
  $\mathcal{A}$ be the matrix code corresponding to $\mathcal{L}$ and
  consider the map $\mathcal{A}\to\F_q^{2\times 4}$,
  $\mat{A}\mapsto\mat{ZA}$. Since $\rank(\mat{Z})=2$ and the minimum
  nonzero rank in $\mathcal{A}$ is $2$, this map must be injective,
  hence also surjective. Thus there exists $\mat{A}\in\mathcal{A}$
  such that $\mat{B}=\mat{ZA}$, implying
  $\cm(L)=\mat{Z}(\imat_3|\mat{A})$. The latter just says that $L$ is
  contained in the plane $\langle(\imat_3|\mat{A})\rangle\in\mathcal{L}$.
\end{proof}
With the aid of this lemma the bound $\#\mathcal{C}\leq
q^8+\gauss{4}{2}{q}$ is established as follows: A fortiori
$\mathcal{C}$ covers every line disjoint from $S$ and hence cannot
contain a plane meeting $S$ in a point (as such a plane would contain
lines disjoint from $S$). Thus, apart from the planes in
$\mathcal{L}$, it contains only planes meeting $S$ in a line or
planes entirely contained in $S$. The number of such planes is bounded
by the total number of lines in $S$, yielding the bound. (Moreover,
the bound can be achieved only if no plane of $\mathcal{C}$ is
contained in $S$.)

We close this section with an alternative construction of an augmented
plane LMRD code in $\PG(6,\F_q)$ of size
$q^8+\gauss{4}{2}{q}$. Such a code was first constructed in
\cite{trautmann-rosenthal10}. Our construction uses the existence of a
\emph{line packing} of $\PG(3,\F_q)$, which refers to a partition of
the line set into line spreads, where a \emph{line spread} is itself
defined as a partition of the point set into lines (the same as a
partial line spread that covers all points).\footnote{Line packings
  form a projective analogue of the standard resolution of the line
  set of an affine plane into parallel classes.}  Line packings of
$\PG(3,\F_q)$ exist for all prime powers $q>1$; cf.\
\cite{beutelspacher74,denniston11}. Since line spreads of
$\PG(3,\F_q)$ have size $q^2+1$ and
$\gauss{4}{2}{q}=(q^2+1)(q^2+q+1)$, the number of line spreads in a
line packing is $q^2+q+1$.

\begin{theorem}
  \label{thm:almrd}
  Any plane LMRD code $\mathcal{L}$ in $\PG(6,\F_q)$ can be
  augmented by $\gauss{4}{2}{q}=q^4+q^3+2q^2+q+1$ further planes to
  yield a plane subspace code $\mathcal{C}$ of
  size
  $\#\mathcal{C}=q^8+\gauss{4}{2}{q}=q^8+q^4+q^3+2q^2+q+1$.\footnote{We
    remind the reader one last time that all subspace codes considered
    (including LMRD codes) have $t=2$ (maximum intersection dimension $1$).}
\end{theorem}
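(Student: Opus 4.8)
The plan is to adjoin to $\mathcal{L}$ exactly one plane for every line of the special solid $S=(0,0,0,*,*,*,*)$ from Lemma~\ref{lma:lmrd}, each of these planes meeting $S$ in the corresponding line. Since $S\cong\PG(3,\F_q)$ has $\gauss{4}{2}{q}=(q^2+1)(q^2+q+1)$ lines, this accounts for the right number of new planes, and such planes are automatically compatible with $\mathcal{L}$. Indeed, the planes of $\mathcal{L}$ (canonical matrices $(\imat_3|\mat{A})$) meet $S$ only in $0$, so by Lemma~\ref{lma:lmrd} they cover precisely the lines disjoint from $S$; whereas if $E$ is a plane with $E\cap S$ a line $L$, then every line of $E$ other than $L$ meets $L\subseteq S$ in a point, so $E$ contains no line disjoint from $S$, and a common line of $E$ and a plane of $\mathcal{L}$ would be simultaneously disjoint from $S$ and incident with $S$ — impossible. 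Hence the only intersections that remain to be controlled are those between the newly added planes.

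To organise the choice of the new planes I would use a \emph{line packing} of $S\cong\PG(3,\F_q)$ — a partition of its line set into $q^2+q+1$ spreads $\mathcal{S}_1,\dots,\mathcal{S}_{q^2+q+1}$, each of size $q^2+1$ — which exists for every prime power $q$ by \cite{beutelspacher74,denniston11}. Since there are likewise $\gauss{3}{1}{q}=q^2+q+1$ $4$-flats $T_1,\dots,T_{q^2+q+1}$ through $S$, I would fix a bijection matching $\mathcal{S}_i$ with $T_i$; note that $S$ is a hyperplane of $T_i$ and $T_i\cap T_j=S$ for $i\neq j$. In each $T_i$ choose any point $P_i\notin S$ and, for every $L\in\mathcal{S}_i$, set $E_L:=L+\langle P_i\rangle$; then $E_L$ is a plane with $E_L\cap S=L$ and $E_L\subseteq T_i$. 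Put $\mathcal{C}:=\mathcal{L}\cup\{E_L: L\text{ a line of }S\}$. The $E_L$ are pairwise distinct (they have distinct traces on $S$) and distinct from the planes of $\mathcal{L}$ (which meet $S$ only in $0$), so $\#\mathcal{C}=q^8+\gauss{4}{2}{q}$.

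It then remains to check that any two planes of $\mathcal{C}$ meet in at most a point. For two planes of $\mathcal{L}$ this holds because $\mathcal{L}$ is an LMRD code with $t=2$; for a plane of $\mathcal{L}$ and an $E_L$ it was argued above. So suppose $L\in\mathcal{S}_i$, $L'\in\mathcal{S}_j$ with $E_L\neq E_{L'}$. If $i=j$, then $L$ and $L'$ are distinct lines of the spread $\mathcal{S}_i$, hence disjoint, so $L+L'=S$ and $E_L+E_{L'}=S+\langle P_i\rangle=T_i$ is $5$-dimensional, giving $\dim(E_L\cap E_{L'})=3+3-5=1$. If $i\neq j$, then $E_L\cap E_{L'}\subseteq T_i\cap T_j=S$, hence $E_L\cap E_{L'}\subseteq(E_L\cap S)\cap(E_{L'}\cap S)=L\cap L'$, which is at most a point since $L\neq L'$. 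Thus $\mathcal{C}$ is a plane subspace code of the desired size.

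The construction involves no genuinely hard step; the one thing that really has to be gotten right is the routing of the lines of $S$ through the $4$-flats $T_i$. The potential failure mode is a pair of added planes lying in a common $4$-flat whose traces on $S$ meet in a point, for then the two planes could span only a $4$-flat and intersect in a line; feeding the lines of $S$ to the $T_i$ along a line packing makes any two lines assigned to the same $T_i$ disjoint in $S$ and thereby removes this obstruction. The freedom in the apices $P_i$, the distinctness and the count of the added planes, and the compatibility with $\mathcal{L}$ are then routine dimension bookkeeping, carried out above.
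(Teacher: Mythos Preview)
Your proof is correct and follows essentially the same approach as the paper: both use a line packing of $S\cong\PG(3,\F_q)$, match the $q^2+q+1$ spreads bijectively with the $4$-flats above $S$, pick an apex point $P_i$ in each $4$-flat, and add the planes $P_i+L$ for $L$ in the $i$-th spread. Your verification via direct dimension counts (splitting into the cases $i=j$ and $i\neq j$) is a clean variant of the paper's argument, which instead classifies lines by their intersection with $S$ and derives a contradiction from a hypothetical doubly covered line.
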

\begin{proof}
  Choose a packing
  $\mathscr{P}=\{\mathcal{P}_1,\dots,\mathcal{P}_{q^2+q+1}\}$ of
  $\PG(S/\F_q)\cong\PG(3,\F_q)$, and let $\{P_1,\dots,P_{q^2+q+1}\}$
  be a set of points in $\PG(6,\F_q)$ forming a set of representatives
  for the $q^2+q+1$ $4$-flats containing $S$.\footnote{By this we mean
    $P_i\notin S$ and the $4$-flats $F_i=P_i+S$, $1\leq i\leq
    q^2+q+1$, account for all $4$-flats above $S$.} For $1\leq i\leq
  q^2+q+1$ connect the point $P_i$ to all $q^2+1$ lines $L_{ij}$ in
  $\mathcal{P}_i$ to form a set of $(q^2+1)(q^2+q+1)$ planes
  $E_{ij}=P_i+L_{ij}$. We claim that
  $\mathcal{C}=\mathcal{L}\cup\{E_{ij}\}$ has the required property.

  Clearly the ``new'' planes $E_{ij}$ cover no line disjoint from $S$
  and each line in $S$ exactly once. Now suppose, for contradiction,
  that $L$ is a line
  meeting $S$ in a point $P$ and contained in two different new planes
  $E=P_i+L_{ij}$, $E'=P_{i'}+L_{i'j'}$. Then $L$ must meet
  both $L_{ij}$ and $L_{i'j'}$ in $P$, whence $L_{ij}$ and $L_{i'j'}$
  intersect and $i\neq i'$. But the $4$-flats $F_i=L+S=F_{i'}$
  coincide, contradiction!
\end{proof}

The subspace code $\mathcal{C}$ of Theorem~\ref{thm:almrd} is quite
small in comparison with the codes constructed later in our main
theorems. But we feel that the construction method is of independent
interest and have included it for this reason.

\section{First Expurgating and Then Augmenting}\label{sec:aelmrd}

In this section we describe the basic idea used in \cite{smt:fq11proc}
to overcome the size restriction imposed on subspace codes containing
LMRD codes, tailored (and generalized) to the case of plane subspace codes in
$\PG(6,\F_q)$ with arbitrary $q$.

Given a plane LMRD code $\mathcal{L}$ in $\PG(6,\F_q)$, we
must obviously remove some of the $q^8$ planes in $\mathcal{L}$
first and then augment the resulting subcode
$\mathcal{L}_0\subset\mathcal{L}$ as far as possible. What is the best
way to do this? The ``removed'' set of planes
$\mathcal{L}_1$, of size $\#\mathcal{L}_1=M_1$ say, covers
$(q^2+q+1)M_1$ lines disjoint from the special solid
$S=(0,0,0,0,*,*,*)$, which become \emph{free lines} of $\mathcal{L}_0$
in the sense that any \emph{new plane} added to $\mathcal{L}_0$, which
contains only lines disjoint from $S$ that are free, will not
increase $t$ (i.e., introduce a multiple line cover). Of course we are
only interested in adding new planes which meet $S$ in a point at this
stage, since this is the only way to go beyond the construction
in Section~\ref{sec:almrd}. In this case,
provided an exact rearrangement of the free
lines into new planes is possible, the subspace code size will increase to
\begin{equation}
  \label{eq:aelmrd}
  q^8-M_1+\frac{(q^2+q+1)M_1}{q^2}=q^8+\frac{(q+1)M_1}{q^2},
\end{equation}
since new planes contain only $q^2$ lines disjoint from $S$. It is
clear that $M_1$ must be a multiple of $q^2$, and it has been shown in
\cite{smt:fq11proc} that $M_1=q^2$ is not feasible but $M_1=q^3$ can
be realized for a particular choice of $\mathcal{L}$ and as far as
only the rearrangement of lines disjoint from $S$ matters. (As an
additional requirement, the chosen new planes must not introduce a
multiple cover of a line meeting $S$ in a point.) We will now
develop the technical machinery needed to derive this result, adapted
to the case $v=7$.

Since the ambient space of $\PG(6,\F_q)$ does not matter (as long as
it is $7$-dimensional over $\F_q$), we take it as $V=W\times\F_{q^4}$,
where $W$ denotes the trace-zero subspace of $\F_{q^4}/\F_q$
(consisting of all $x\in\F_{q^4}$ satisfying
$\trace(x)=\trace_{\F_{q^4}/\F_q}(x)=x+x^q+x^{q^2}+x^{q^3}=0$). This
allows us to use the additional structure of $\PG(6,\F_q)$ imposed by
the extension field $\F_{q^4}$. In this model our special solid is
$S=\{0\}\times\F_{q^4}\cong\F_{q^4}$ (naturally); likewise, we make
the identification $W\times\{0\}\cong W$. Subspaces of $V/\F_q$ can be
parametrized in the form
\begin{equation}
  \label{eq:U(Z,t,f)}
  U=\bigl\{(x,f(x)+y);x\in Z,y\in T,f\in\Hom(Z,\F_{q^4}/T)\bigr\},  
\end{equation}
where
\begin{align*}
  Z&=\bigl\{x\in W;\exists y\in\F_{q^4}\text{ such that }(x,y)\in
  U\bigr\},\\
  T&=\bigl\{y\in\F_{q^4};(0,y)\in U\bigr\}\\
\end{align*}
and $f\colon Z\to\F_{q^4}$ is any $\F_q$-linear map whose graph (in
the sense of Real Analysis) $\graph_f=\bigl\{(x,f(x));x\in
Z\bigr\}$ is contained in $U$. The $\F_q$-subspaces
$Z\subseteq W$ (projection of $U$ onto $W$) and $T\subseteq\F_{q^4}$
(naturally isomorphic to the kernel $U\cap S$ of this projection) are uniquely
determined by $U$, while $f$ is only determined up to addition of an
$\F_q$-linear map with values in $T$ 
and may therefore be replaced by any element in the coset
$f+\Hom(Z,T)\in\Hom(Z,\F_{q^4})/\Hom(Z,T)\cong\Hom(Z,\F_{q^4}/T)$.\footnote{It
  goes without saying that ``$\Hom$'' denotes the set of $\F_q$-linear
  maps between the indicated $\F_q$-spaces, which forms an $\F_q$-space of its
  own with respect to the point-wise operations.}
We denote this parametrization by $U=U(Z,T,f)$, using sometimes the
subspaces $Z\times\{0\}$, $\{0\}\times T$ of $\PG(V/\F_q)$ in place of
$Z,T$, as indicated above.

Observe that the subspaces
disjoint from $S$ are precisely the graphs $\graph_f=U(Z,\{0\},f)$ of
$\F_q$-linear maps $f\colon Z\to\F_{q^4}$. At the other extreme, the
subspaces containing $S$ are of the form $U(Z,S,0)=Z\times S$.

The incidence relation between subspaces of $V/\F_q$
can also be described within this setting: $U(Z',T',f')\subseteq
U(Z,T,f)$ if and only if $Z'\subseteq Z$, $T'\subseteq T$ and 
$f|_{Z'}-f'\in\Hom(Z',T)$.


Now recall from Galois Theory that the powers
$\identity,\frob,\frob^2,\frob^3$ of the Frobenius automorphism
$\frob\colon\F_{q^4}\to\F_{q^4}$, $x\mapsto x^q$ of $\F_{q^4}/\F_q$
form a basis of $\End(\F_{q^4}/\F_q)$ over $\F_{q^4}$. This says that
every $\F_q$-linear map $f\colon\F_{q^4}\to\F_{q^4}$ is evaluation of
a unique linearized polynomial
$a(X)=a_0X+a_1X^q+a_2X^{q^2}+a_3X^{q^3}\in\F_{q^4}[X]$ of symbolic
degree $\leq 3$. For simplicity we write $x\mapsto f(x)$ as
$a_0x+a_1x^q+a_2x^{q^2}+a_3x^{q^3}$. The restriction map $f\mapsto
f|_W$ then gives that every element of $\Hom(W,\F_{q^4})$ is
represented uniquely as $a_0x+a_1x^q+a_2x^{q^2}$ for some
$a_0,a_1,a_2\in\F_{q^4}$ (since the linear maps vanishing on $W$ are
of the form $a(x+x^q+x^{q^2}+x^{q^3})$ with $a\in\F_{q^4}$). 

Next we
name various subspaces of $\Hom(W,\F_{q^4})$, which will subsequently
play an important role:
\begin{align*}
  \gabidulin&=\{a_0x+a_1x^q;a_0,a_1\in\F_{q^4}\},\\
  \rspace&=\{ax^q-a^qx;a\in\F_{q^4}\},\\
  \tspace&=\{ax^q-a^qx;a\in W\},\\
  \dspace(Z,P)&=r(ab^q-a^qb)^{-1}\langle ax^q-a^qx,bx^q-b^qx\rangle
\end{align*}
for a $2$-dimensional subspace $Z=\langle a,b\rangle$ of $W$ and a
point $P=\F_q(0,r)$ of the special solid $S$ (i.e.\
$r\in\F_{q^4}^\times$). The space $\gabidulin$ has minimum rank
distance $2$ (since $a_0x+a_1x^q\neq 0$ has at most $q$ zeros in $W$)
and size $\#\gabidulin=q^8$. It is therefore an MRD code.  We call it
the \emph{Gabidulin code}, since it is a basis-free version of a
member of the family of MRD codes constructed in \cite{gabidulin85},
which are nowadays commonly called Gabidulin codes. Further we have
$\dspace(Z,P)\subset\tspace\subset\rspace\subset\gabidulin$, $\tspace$
has constant rank $2$ (since $ax^q-a^qx$ has $1$-dimensional kernel
$\F_qa$ if $a\in W\setminus\{0\}$), $\rspace\setminus\tspace$ has constant
rank $3$, and $\dspace(Z,P)$ consists of all linear maps
$f\in\gabidulin$ satisfying $f(Z)\subseteq\F_qr$.\footnote{Of course
  $0\in\tspace$ has rank $0\neq 2$, but it is custom to refer to a
  matrix space as a constant-rank space if all nonzero matrices in the
  matrix space have the same rank.}

Finally we fix $\mathcal{L}=\{\graph_f;f\in\gabidulin\}$ for the
remainder of this article and call $\mathcal{L}$ the \emph{lifted
  Gabidulin code}. The reader may check that $f\mapsto\graph_f$ provides
a basis-free description of the lifting construction
(passing from matrix codes to subspace codes) and hence $\mathcal{L}$
is a plane LMRD code as needed for the subsequent discussion. 

\begin{lemma}
  \label{lma:remove}
  For a set of planes $\mathcal{L}_1\subseteq\mathcal{L}$ let
  $\gabidulin_1\subseteq\gabidulin$ be the corresponding set of linear
  maps in the Gabidulin code. In order that the free lines determined
  by $\mathcal{L}_1$ can be rearranged into new planes meeting $S$ in
  a point, it is necessary and sufficient that $\#\gabidulin_1=mq^2$
  is a multiple of $q^2$ and for each
  $2$-dimensional subspace $Z\subset W$ there exist (not necessarily
  distinct) points
  $P_1,\dots,P_m$ on $S$ and linear maps $f_1,\dots,f_m\in\gabidulin$ such that
  \begin{equation*}
      \gabidulin_1=\biguplus_{i=1}^m\bigl(f_i+\dspace(Z,P_i)\bigr).
  \end{equation*}
\end{lemma}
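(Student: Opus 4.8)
The plan is to set up, for each $2$-dimensional subspace $Z\subseteq W$, a dictionary between the lines disjoint from $S$ whose projection onto $W$ equals $Z$, cosets in the Gabidulin code $\gabidulin$, and new planes meeting $S$ in a point with ``base'' $Z$; the lemma then becomes a purely coset-theoretic statement. First I would record the relevant structure of a new plane $E$ (a plane with $\dim(E\cap S)=1$): the projection $V\to W$ restricted to $E$ has kernel $E\cap S$ and image a $2$-dimensional subspace $Z$, the base of $E$, and the $q^2$ lines of $E$ disjoint from $S$ are exactly the lines of $E$ not through $P:=E\cap S$, each projecting isomorphically onto $Z$. Since distinct planes of the LMRD code $\mathcal{L}$ meet in at most a point, every free line lies in exactly one plane $\graph_g$ of $\mathcal{L}$ and has a well-defined $2$-dimensional projection onto $W$, while a new plane only covers free lines sharing one common projection $Z$. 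Hence a rearrangement of all free lines into new planes exists if and only if, for every $2$-dimensional $Z\subseteq W$ separately, the free lines with projection $Z$ can be partitioned into new planes with base $Z$.

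Fix such a $Z$. The restriction map $\rho_Z\colon\gabidulin\to\Hom(Z,\F_{q^4})$, $g\mapsto g|_Z$, is injective, since $\gabidulin$ has minimum rank distance $2$ and hence no nonzero map in $\gabidulin$ vanishes on the $q^2$-element subspace $Z$; as both spaces have $\F_q$-dimension $8$, $\rho_Z$ is an $\F_q$-linear bijection. (Equivalently, by Lemma~\ref{lma:lmrd} every line disjoint from $S$ with projection $Z$ equals $\graph_{g|_Z}$ for a unique $g\in\gabidulin$.) In particular the free lines with projection $Z$ correspond under $g\mapsto\graph_{g|_Z}$ exactly to $\gabidulin_1\subseteq\gabidulin$.

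Next I would identify what a new plane with base $Z$ covers. Writing $E=U(Z,\F_q r,f)$ for the new plane through the point $P=\F_q(0,r)\in S$, the incidence relation for the parametrization $U(Z,T,f)$ shows that the lines of $E$ disjoint from $S$ are precisely the $\graph_h$ with $h$ ranging over the coset $f+\Hom(Z,\F_q r)\subseteq\Hom(Z,\F_{q^4})$ (there are $q^2$ of them, as required). Pulling this coset back along the $\F_q$-linear bijection $\rho_Z$ and using the characterization $\dspace(Z,P)=\{g\in\gabidulin:g(Z)\subseteq\F_q r\}=\rho_Z^{-1}\bigl(\Hom(Z,\F_q r)\bigr)$, the free lines covered by $E$ correspond exactly to a coset $g_0+\dspace(Z,P)$ in $\gabidulin$. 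Conversely, any such coset $g_0+\dspace(Z,P)$ is the set of free lines covered by the genuine new plane $U\bigl(Z,P,\rho_Z(g_0)+\Hom(Z,\F_q r)\bigr)$, which meets $S$ exactly in $P$. Combining with the previous paragraph, the free lines with projection $Z$ can be rearranged into new planes with base $Z$ if and only if $\gabidulin_1$ is a disjoint union of cosets $f_i+\dspace(Z,P_i)$ with $P_1,\dots,P_m$ points of $S$; and since $\#\dspace(Z,P)=q^2$ (being the image under $\rho_Z^{-1}$ of the $q^2$-element space $\Hom(Z,\F_q r)$), such a decomposition forces $\#\gabidulin_1=mq^2$ with $m=\#\gabidulin_1/q^2$ independent of $Z$. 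Requiring this for all $2$-dimensional $Z\subseteq W$ is exactly the assertion, and both the ``necessary'' and the ``sufficient'' directions follow from the equivalences above.

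The main obstacle, and the place where care is needed, is the bookkeeping in the three-way translation — free lines as graphs of $\F_q$-linear maps on $2$-dimensional subspaces of $W$, new planes in the $U(Z,P,f)$ form, and cosets of the spaces $\dspace(Z,P)$ in $\gabidulin$ — and in particular checking that the requirement that the free lines be covered \emph{exactly once} corresponds on the algebraic side to a genuine \emph{disjoint} coset partition of $\gabidulin_1$, rather than merely a covering. Once the bijection $\rho_Z$ and the incidence relation are in place, the remaining verifications (the dimension counts, that $U(Z,P,f)$ really is a plane meeting $S$ in $P$, and the value $\#\dspace(Z,P)=q^2$) are routine.
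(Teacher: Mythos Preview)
Your proof is correct and follows essentially the same approach as the paper's: both reduce to one $2$-dimensional subspace $Z\subset W$ at a time (you via the projection $V\to W$, the paper via the hyperplane $H=Z\times\F_{q^4}$ above $S$, which are equivalent descriptions), use the restriction bijection $\rho_Z\colon\gabidulin\to\Hom(Z,\F_{q^4})$, and identify the set of free lines covered by a new plane $N(Z,P,g)$ with a coset $f+\dspace(Z,P)$. Your write-up is slightly more explicit about why $\rho_Z$ is bijective and about the disjointness bookkeeping, but there is no genuine difference in method.
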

Note that the condition requires $\gabidulin_1$ to be a union of
cosets of spaces $\dspace(Z,P)$ simultaneously in $q^2+q+1$ different
ways, one for each $2$-dimensional subspace $Z\subset W$. The number
of new planes in the rearrangement must be $m(q^2+q+1)$, but the
rearrangement itself is perhaps not uniquely determined by
$\gabidulin_1$. Moreover, the lemma does not say anything about
whether the rearrangement introduces a multiple
cover of some line meeting $S$ in a point.
\begin{proof}[Proof of the lemma]
  Lines $L$ disjoint from $S$ as well as new planes $N$ meeting $S$ in a
  point are contained in a unique hyperplane $H$ above $S$ ($H=L+
  S$ resp.\ $H=N+S$). ``Old'' planes $E\in\mathcal{L}$ are
  transversal to these hyperplanes and the $H$-section $E\mapsto E\cap
  H$ identifies $\mathcal{L}$ with the set of $q^8$ lines in
  $\PG(H)$ disjoint from $S$ (since $\mathcal{L}$ is an LMRD
  code). In terms of the parametrization $H=H(Z,\F_{q^4},0)$,
  $E=\graph_f$, $L=\graph_g$ the corresponding $H$-section is just
  restriction $g=f|_Z$. Thus we can look at each hyperplane above $S$
  separately. 

  Let $H$ be such a hyperplane and $Z$ the corresponding
  $2$-dimensional subspace of $W$. Planes in $H$ meeting $S$ in the
  point $P=\F_q(0,r)$ have the form $N=N(Z,\F_qr,g)$ with
  $g\in\Hom(Z,\F_{q^4})$ and contain the $q^2$ lines $L=\graph_h$,
  $h\in g+\Hom(Z,\F_qr)$, disjoint from $S$. Denoting by $f\in\gabidulin$
  the unique linear map such that $f|_Z=g$, we have that
  $f+\dspace(Z,P)$ restricts to $g+\Hom(Z,\F_qr)$ on $Z$. Hence the
  $mq^2$ free lines in $H$ determined by the planes in $\mathcal{L}_1$ can be
  rearranged into new planes $N(Z,\F_qr,g)$ iff $\gabidulin_1$ is a
  disjoint union of cosets of the form $f+\dspace(Z,P)$ with $P\in S$,
  $f\in\gabidulin_1$.
\end{proof}
Now observe that our distinguished space $\tspace$ contains one space
$\dspace(Z,P)$ for each $Z=\langle a,b\rangle\subset W$, viz.\
$\dspace(Z,P)$ with $P=\F_q(0,ab^q-a^qb)$. Hence
$\gabidulin_1=\tspace$ satisfies the conditions of
Lemma~\ref{lma:remove} with $m=q$, $P_1=\dots=P_q=P=\F_q(0,ab^q-a^qb)$
and $f_1,\dots,f_q$ a system of coset representatives for
$\tspace/\dspace(Z,P)$. A fortiori the same is true for any coset of
$\tspace$ in $\gabidulin$, and even for any disjoint union of
``rotated'' cosets $\biguplus_{j=1}^r(f_j+r_j\tspace)$ with
$r_j\in\F_{q^4}^\times$ and $f_j\in\gabidulin$.\footnote{For the
  latter the points $P_i$ vary not only with $Z$ but also with $j$.}

The next theorem, which closes this section, shows that if we take 
$\gabidulin_1=\rspace$, the distinguished subspace of order $q^4$
defined along with $\tspace$, then the corresponding rearrangement
into new planes does not introduce a multiple line cover and hence
results in a plane subspace code with $t=2$.

\begin{theorem}
  \label{thm:rspace}
  Let $\mathcal{C}$ be the set of planes in
  $\PG(W\times\F_{q^4})\cong\PG(6,\F_q)$ obtained by removing all
  planes $E=\graph_f$, $f\in\rspace$, from $\mathcal{L}$ and adding all
  planes of the form $N=N(Z,P,g)$ with $Z=\langle a,b\rangle\subset W$
  $2$-dimensional, $P=\F_q(0,ab^q-a^qb)$ (so $P$ depends on $Z$) and
  $g=f|_Z$ for some
  $f\in\rspace$. Then $\mathcal{C}$ forms a subspace code (i.e.,
  $t=2$) of size $\#\mathcal{C}=q^8+q^3+q^2$. Moreover, $\mathcal{C}$
  can be augmented by $\gauss{4}{2}{q}$ further planes meeting $S$ in
  a line to a subspace code $\widehat{\mathcal{C}}$ of
  size $\#\widehat{\mathcal{C}}=q^8+q^4+2q^3+3q^2+q+1$.
\end{theorem}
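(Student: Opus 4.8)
The plan is to treat the two assertions in turn, working in the model $V=W\times\F_{q^4}$ and writing $B(x,y)=xy^q-x^qy$ for the alternating $\F_q$-bilinear form on $\F_{q^4}$; thus $\rspace=\{B(a,\cdot):a\in\F_{q^4}\}$, $P_Z=\F_q(0,B(a,b))$ for $Z=\langle a,b\rangle$, and one checks $\dspace(Z,P_Z)=\{B(d,\cdot):d\in Z\}$, so that $\dspace(Z,P_Z)\subseteq\rspace$ is an $\F_q$-subspace of index $q^2$ for every two-dimensional $Z\subseteq W$. For the cardinality: $\mathcal{C}$ arises from $\mathcal{L}$ by deleting the $\#\rspace=q^4$ planes $\graph_f$ ($f\in\rspace$) and, for each of the $\gauss{3}{1}{q}$ two-dimensional subspaces $Z\subseteq W$, adjoining the planes $N(Z,P_Z,f|_Z)$ ($f\in\rspace$); two such values of $f$ yield the same plane exactly when they differ by an element of $\dspace(Z,P_Z)$, so there are $q^2$ new planes per $Z$ and $q^2\gauss{3}{1}{q}=q^4+q^3+q^2$ new planes in all, giving $\#\mathcal{C}=q^8-q^4+(q^4+q^3+q^2)=q^8+q^3+q^2$.

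To see $\mathcal{C}$ has $t=2$ I check the three kinds of lines. No line of $S$ is covered: the retained planes of $\mathcal{L}$ are disjoint from $S$ (Lemma~\ref{lma:lmrd}) and each new plane meets $S$ in the single point $P_Z$. For lines disjoint from $S$ I invoke Lemma~\ref{lma:remove} with $\gabidulin_1=\rspace$; the inclusion $\dspace(Z,P_Z)\subseteq\rspace$ of index $q^2$ furnishes the required decomposition (all points taken equal to $P_Z$), and within each hyperplane above $S$ the prescribed new planes cover exactly the free lines left behind by the deleted planes, each once. Since no plane of $\mathcal{L}$ carries a line meeting $S$ in a point, everything reduces to the assertion that two distinct new planes share no line meeting $S$ in a point, which is the crux.

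So let $L\subseteq N(Z,P_Z,g)\cap N(Z',P_{Z'},g')$ with $\dim(L\cap S)=1$. Then $L\cap S$ lies in $N(Z,P_Z,g)\cap S=P_Z$ and in $N(Z',P_{Z'},g')\cap S=P_{Z'}$, so $P_Z=P_{Z'}$; but $Z\mapsto\F_qB(a,b)$ is induced by the linear map $\Lambda^2W\to\F_{q^4}$, $a\wedge b\mapsto B(a,b)$, which is injective because $B(a,b)\neq0$ whenever $a,b$ are $\F_q$-independent, so $Z=Z'$. Now $L$ is one of the $q+1$ lines through $P_Z$ in $N(Z,P_Z,g)$; its $W$-direction is a line $\F_qc$ with $c\in Z$, and membership in $N(Z,P_Z,g')$ forces $(f-f')(c)\in\F_qB(a,b)$, where $f-f'=B(d,\cdot)$ and, the two planes being distinct, $f-f'\notin\dspace(Z,P_Z)$, i.e.\ $d\notin Z$. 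This contradicts the elementary fact that $B(d,c)\in\F_qB(a,b)$ for some $c\in Z\setminus\{0\}$ forces $d\in Z$: completing $c$ to a basis $\{c,c'\}$ of $Z$ gives $\F_qB(a,b)=\F_qB(c,c')$, so $B(d,c)=\mu B(c',c)$ for some $\mu\in\F_q$, hence $B(d-\mu c',c)=0$; since the kernel of $x\mapsto B(x,c)$ is precisely $\F_qc$, this yields $d\in\langle c',c\rangle=Z$. Hence $\mathcal{C}$ is a subspace code.

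For the augmentation I add, one per line $\ell$ of $S$, a plane $E_\ell=P_{\ell'}+\ell$, where $\ell'$ ranges over the $\gauss{3}{1}{q}$ $4$-flats above $S$ and $P_{\ell'}\notin S$ is a representative point with $P_{\ell'}+S=\ell'$, the pairing of lines with $4$-flats coming from a line packing of $\PG(S)\cong\PG(3,q)$, exactly as in Theorem~\ref{thm:almrd}; the only novelty lies in choosing this pairing and the points $P_{\ell'}$. Here one uses that all points $P_Z$ lie on the plane $\Pi$ of $\PG(S)$ cut out by the image of $\Lambda^2W\to\F_{q^4}$: for a $4$-flat $\ell'=\F_qw\times\F_{q^4}$ the $q+1$ points $P_Z$ with $w\in Z$ form the line $\ell_0(\ell')=\PG(\{B(w,b):b\in W\})$ of $\Pi$, and $\ell'\mapsto\ell_0(\ell')$ is a bijection onto the $\gauss{3}{1}{q}$ lines of $\Pi$. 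Since two lines of $\Pi$ always meet whereas two lines of one spread are disjoint, each spread of the packing contains exactly one line of $\Pi$; I use the resulting bijection (composed with $\ell_0^{-1}$) to assign $4$-flats to spreads, and I pick $P_{\ell'}=\F_q(w,v)$ with $v\notin V_w:=\{B(d,w):d\in\F_{q^4}\}$ (possible as $\dim V_w=3$, and $V_w$ depends only on $\ell'$). Then $\#\widehat{\mathcal{C}}=\#\mathcal{C}+\gauss{4}{2}{q}=q^8+q^4+2q^3+3q^2+q+1$, and $t=2$ survives: every line of $E_\ell$ other than $\ell$ meets $S$ in a point of $\ell$, so lines disjoint from $S$ are untouched and the only new line inside $S$ is $\ell$; two planes $E_\ell,E_{\ell''}$ cannot share a line meeting $S$ (same spread: the lines are disjoint; different $4$-flats: the shared line would lie in $S$), as in Theorem~\ref{thm:almrd}; and if $E_\ell$ shared a line $L$ with a new plane $N(Z,P_Z,g)$, then $L\cap S=P_Z\in\ell$ and $L\subseteq E_\ell\cap H_Z$ with $H_Z=Z\times\F_{q^4}$, so the $W$-direction $\F_qw$ of $E_\ell$'s lines off $\ell$ lies in $Z$; then $\ell$ and $\ell_0(\F_qw\times\F_{q^4})$ are spread lines through $P_Z$, hence equal, so $\ell\subseteq V_w$, and $L\subseteq N(Z,P_Z,g)$ then forces $v\in V_w+\ell=V_w$, contradicting the choice of $P_{\ell'}$. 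The main obstacle is the exterior-algebra bookkeeping — identifying the configuration of the $P_Z$ with a plane and the $\ell_0(\ell')$ with its lines, and making the $E_\ell$-versus-$N$ incidence condition fully explicit — after which both verifications become routine.
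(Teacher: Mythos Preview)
Your proof is correct, and in both parts you take a somewhat different route from the paper.

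For the first assertion ($t=2$ for $\mathcal{C}$), the paper separates the cases $Z\neq Z'$ (handled by showing the map $\langle a,b\rangle\mapsto\F_q(ab^q-a^qb)$ is injective via a direct image computation) and $Z=Z'$ (handled by a $3\times 3$ Moore-type determinant that forces $a,b,u$ to be $\F_q$-dependent). Your $\Lambda^2W\hookrightarrow\F_{q^4}$ argument for injectivity is equivalent to the paper's, but your treatment of the $Z=Z'$ case is more elementary: rather than the determinant, you observe directly that $B(d,c)\in\F_qB(c,c')$ implies $B(d-\mu c',c)=0$ and hence $d-\mu c'\in\F_qc$. This is cleaner and avoids the conjugate-rank argument.

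For the augmentation, the difference is more substantial. The paper invokes the construction of Theorem~\ref{thm:almrd} with an \emph{arbitrary} packing and says that the only additional requirement is to pick each representative point $P_i$ outside the solid $(\F_qx)\times I$ covered by the new planes. But as written this is not quite enough: if a spread line $L_{ij}$ assigned to $F=\F_qx\times\F_{q^4}$ meets $\ell_0(F)$ in a single point while $L_{ij}\not\subseteq I$, then $E_{ij}\cap((\F_qx)\times I)$ is a line through that point, and this line is contained in some $N(Z,P_Z,g)$ regardless of the choice of $P_i$. Your construction sidesteps this by exploiting the observation that the points $P_Z$ fill out a plane $\Pi$ of $\PG(S)$ whose $q^2+q+1$ lines are exactly the $\ell_0(\ell')$, and that any packing contains precisely one line of $\Pi$ per spread; using this to pair $4$-flats with spreads forces $\ell_0(F)$ itself into the spread assigned to $F$, so every other spread line is disjoint from $\ell_0(F)$. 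Your final verification (showing $\ell=\ell_0(F)\subseteq V_w$ and hence $v\in V_w$) then goes through cleanly. This is a genuine refinement of the paper's argument, not merely a reformulation.
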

\begin{proof}
  Since $M_1=\#\rspace=q^4$, the rearrangement increases the size of
  the subspace code by $(q+1)M_1/q^2=q^3+q^2$. Thus
  $\#\mathcal{C}=q^8+q^3+q^2$, and it remains to show that
  $\mathcal{C}$ still has $t=2$. 

  By Lemma~\ref{lma:remove} and the definition of $\mathcal{C}$, the
  new planes $N=N(Z,P,g)$ added to
  $\mathcal{L}_0=\mathcal{L}\setminus\mathcal{L}_1$ cover each free
  line exactly once. Hence it suffices to check that no line meeting
  $S$ in a point is covered more than once.

  To this end we first we show that the map $\langle
  a,b\rangle\mapsto\F_q(ab^q-a^qb)$ (i.e.\ $Z\mapsto P$) is
  one-to-one. This implies that new planes in different hyperplanes
  above $S$ do not meet on $S$ and hence cannot intersect in a
  line. Suppose, by contradiction, that different subspaces 
  $Z_1,Z_2$ of $W$ correspond to the
  same point $P$. Since $\dim(Z_1\cap Z_2)=1$, we can write
  $Z_1=\langle a,b_1\rangle$, $Z_2=\langle a,b_2\rangle$. The $\F_q$-linear map
  $ax^q-a^qx\in\Hom(W,\F_{q^4})$ has kernel $\F_qa$ and hence maps $Z_1,Z_2$ to
  different $1$-dimensional subspaces
  $\F_q(ab_1^q-a^qb_1)\neq\F_q(ab_2^q-a^qb_2)$; contradiction!

  Next let $N_1=N(Z,P,g_1)$, $N_2=N(Z,P,g_2)$, $g_i=f_i|_Z$, be
  different new planes meeting $S$ in the same point $P$ (and hence
  with the same $Z$). Write $Z=\langle a,b\rangle$ and
  $f_1(x)-f_2(x)=u_0x+u_1x^q$. The planes $N_1,N_2$ have a point outside $S$
  (and hence a line through $P$) in common iff there exists $x\in
  Z\setminus\{0\}$ such that
  $f_1(x)-f_2(x)\in\F_q(ab^q-a^qb)$. Setting $x=\lambda
  a+\mu b$, this is equivalent to a nontrivial solution
  $(\lambda,\mu,\nu)\in\F_q^3$ of the equation
  \begin{equation*}
    \lambda(u_0a+u_1a^q)+\mu(u_0b+u_1b^q)+\nu(ab^q-a^qb)=0.
  \end{equation*}
  Thus $f_1,f_2\in\gabidulin$ determine new planes $N_1,N_2$
  satisfying $N_1\cap N_2=\{P\}$ for those choices of $Z=\langle
  a,b\rangle\subset W$
  (equivalently, for those choices of the hyperplane $H=Z+S$) for
  which $u_0a+u_1a^q$, $u_0b+u_1b^q$, $ab^q-a^qb$ are linearly
  independent over $\F_q$.\footnote{Viewed projectively, this requires
    that $f(x)=u_0x+u_1x^q$ maps the line $Z=\langle a,b\rangle$ to
    another line $Z'=f(Z)$ of $\PG(\F_{q^4}/\F_q)$ and the 
    point $\F_q(ab^q-a^qb)$ corresponding to $Z$ is not on $Z'$.}

  With these preparations we can now prove that $\mathcal{C}$ still
  has $t=2$. For $f_1,f_2\in\mathcal{R}$ we have
  $f_1-f_2\in\mathcal{R}$ and hence of the form $ux^q-u^qx$. If
  $f_1,f_2$ are in different cosets of $\dspace(Z,P)$ then $u\notin
  Z$. The equation $\lambda(ua^q-u^qa)+\mu(ub^q-u^qb)+\nu(ab^q-a^qb)=0$
  can be rewritten as
  \begin{equation*}
    \begin{vmatrix}
      a&b&u\\
      a^q&b^q&u^q\\
      -\mu&\lambda&\nu
    \end{vmatrix}=0.
  \end{equation*}
  If $(\lambda,\mu,\nu)$ is nonzero then
  using the linear dependence of the rows of this matrix we can
  express the conjugates $(a^{q^i},b^{q^i},u^{q^i})$ as linear
  combinations (with coefficients in $\F_{q^4}$) of $(a,b,u)$ and
  $(-\mu,\lambda,\nu)\in\F_q^3$. This shows that the $4\times 3$
  matrix formed from the conjugates of $(a,b,u)$ has rank $2$ and
  implies that $a,b,u$ are linearly dependent over $\F_q$;
  contradiction. Thus $\mathcal{C}$ has the required property.


  The augmented subspace code $\widehat{\mathcal{C}}$ is constructed
  in the same way as in the proof of Theorem~\ref{thm:almrd}. The only
  thing that needs to be checked is that each $4$-flat $F$ above $S$
  contains a point $Q\notin S$ that is not covered by any new plane
  $N\in\mathcal{C}$. Equivalently, for any $x\in W\setminus\{0\}$ the
  new planes $N=N(Z,\F_qr,g)$ with $x\in Z$ do not cover all $q^4$ points
  $\F_q(x,y)$, $y\in\F_{q^4}$. This property will now be verified
  through explicit computation.

  A $2$-dimensional subspace $Z\subset W$ containing $x$ has the form
  $Z=\langle a,x\rangle$ with $a\in W$ and $ax^q-a^qx\neq 0$. The
  points $\F_q(x,y)$ 
  covered by the $q^2$ new planes corresponding to $Z$ have
  $y=ux^q-u^qx+\mu(ax^q-a^qx)=(u+\mu a)x^q-(u+\mu a)^qx$ for
  $u\in\F_{q^4}/Z$, $\mu\in\F_q$. It follows that $y$ takes precisely
  the $q^3$ values in the image $I$ of the linear map $c\mapsto
  cx^q-c^qx$, which has kernel $\F_qx$. In other words, the points in
  the $4$-flat $F=(\F_qx)\times S$ covered by the new planes in
  $\mathcal{C}$ form the affine part of a solid, viz.\ $(F_qx)\times I$,
  with plane at infinity $\{0\}\times I$. In particular, there
  are $q^4-q^3$ valid choices for the point $Q$.
  This completes the proof of the Theorem~\ref{thm:rspace}.
\end{proof}
In the binary case $q=2$ the size of the augmented subspace code in
Theorem~\ref{thm:rspace} is $\#\widehat{\mathcal{C}}=303$, falling
short by $1$ of the corresponding code in
\cite{kohnert-kurz08}. On the other hand, $\#\widehat{\mathcal{C}}$
strictly exceeds the bound imposed on codes containing an LMRD code for every
$q$, showing already the effectiveness of our approach. However,
this is not the end of the story; Theorem~\ref{thm:rspace} will be 
improved upon later.

\section{An Attempt to Construct a $q$-Analogue and its 
  Failure}\label{sec:attempt}

In this section we apply the method developed in the previous section
to the construction problem for $q$-analogues of the Fano plane. The
attempt eventually fails for every $q$ but produces the largest known
plane subspace codes in $\PG(6,\F_q)$.

We start with a few words on automorphisms of subspace codes in
$\PG(V/\F_q)$. The group $G=\GL(V/\F_q)$ obviously acts on plane
subspace codes in $\PG(V/\F_q)$, but is by way too large for our
purpose. The stabilizer $G_S$ of our special solid $S$ in $\GL(V/\F_q)$
consists of all maps $L$ of the form $(x,y)L=(xL_{11},xL_{12}+yL_{22})$
with $L_{11}\in\GL(W/\F_q)$, $L_{22}\in\GL(\F_{q^4}/\F_q)$ and
$L_{12}\in\Hom(W,\F_{q^4})$. The map $L$ sends a plane $E=\graph_f$
disjoint from $S$ to $\graph_g$ with
$g=L_{11}^{-1}fL_{22}+L_{11}^{-1}L_{12}$ (composition of maps is from
left to right for the moment), so that on the corresponding maps
$f\in\Hom(W,\F_{q^4})$ it affords the group of all ``affine''
transformations $f\mapsto A\circ f\circ B+C$ with $A\in\GL(\F_{q^4}/\F_q)$,
$B\in\GL(W/\F_q)$ and $C\in\Hom(W,\F_{q^4})$.

The group $G_S$ is still too large for our purpose, but we have that the
Gabidulin code $\gabidulin$ is invariant under the subgroup consisting
of all maps $f\mapsto rf$ with $r\in\F_{q^4}^\times$, which acts as a
Singer group on the projective space
$\PG(S/\F_q)\cong\PG(3,\F_q)$. This group, or rather the corresponding
subgroup $\Sigma\leq\GL(V/\F_q)$ consisting of all maps
$(x,y)\mapsto(x,ry)$ with $r\in\F_{q^4}^\times$, is suitable for our
purpose.\footnote{Viewed as collineation group, $\Sigma$ has order
  $q^4-1$ (not the same as the Singer group).}
It is our next goal to make the expurgation-augmentation process
of Section~\ref{sec:aelmrd} invariant under $\Sigma$.

How large should the set $\mathcal{L}_1$ of removed planes be for a
putative $q$-analogue $\mathcal{F}_q$\,? We can arrange coordinates in
such a way that $S$ does not contain a block of $\mathcal{F}_q$ and
hence $q^8-q^7+q^3$ blocks are disjoint from $S$; cf.\
Lemma~\ref{lma:q-analog}. This requires
\begin{equation*}
\#\mathcal{L}_1=q^7-q^3=q^3(q^4-1)=(q^4-q^3)(q^3+q^2+q+1)
\end{equation*}
and the number of new planes through each point $P\in S$ to be
$(q^4-q^3)(q^2+q+1)/q^2=q^4-q$.\footnote{As a consistency check, use
  that this number can also be obtained by subtracting from the total
  number $q^4+q^2+1$ of blocks through $P$ (cf.\
  Lemma~\ref{lma:q-analog}) the number $q^2+q+1$ of blocks that meet
  $S$ in a line through $P$. Indeed, $q^4+q^2+1-(q^2+q+1)=q^4-q$.}
Hence a $\Sigma$-invariant construction of $\mathcal{F}_q$ is at least
conceivable and, even better, there is a canonical candidate for a
$\Sigma$-invariant subset $\gabidulin_1\subset\gabidulin$ of the
appropriate size, viz.\ the union of all ``rotated'' cosets
$r(f+\tspace)$ with $f\in\rspace\setminus\tspace$ and
$r\in\F_{q^4}^\times$.\footnote{The spaces $r\tspace$ itself cannot be
  used, since these are not disjoint.}  A moment's reflection shows
that this set $\gabidulin_1$ consists precisely of all binomials
$a_0x+a_1x^q$ with $1$-dimensional kernel in $\F_{q^4}/\F_q$
complementary to $W$ (thus the rank in $\Hom(W,\F_{q^4})$ is $3$).
The complementary subset
$\gabidulin_0=\gabidulin\setminus\gabidulin_1$ consists of $0$, the
$2(q^4-1)$ monomials $rx$, $rx^q$ with $r\in\F_{q^4}^\times$, the
$(q^4-1)(q^2+q+1)$ binomials 
$r(ux^q-u^qx)$ with $r\in\F_{q^4}^\times$ and $u\in W\setminus\{0\}$
(these have rank $2$ in $\Hom(W,\F_{q^4})$) and
$(q^4-1)(q^3+q^2+q+1)(q-2)$ binomials $a_0x+a_1x^q$ with no nontrivial
zero in $\F_{q^4}$. The set $\gabidulin_1$ decomposes as
\begin{equation*}
  \gabidulin_1=\biguplus_{r\in\F_{q^4}^\times/\F_q^\times}r(\rspace\setminus\tspace), 
\end{equation*}
showing that the $(q-1)\times\frac{q^4-1}{q-1}=q^4-1$ cosets 
$r(f+\tspace)$ used are pairwise disjoint, as needed for the
construction.

New planes are defined by connecting the free lines $L$ in the planes
corresponding to $f+\tspace$ to the points $P=\F_q(0,ab^q-a^qb)$,
where $Z=\langle a,b\rangle\subset W$ is the $2$-dimensional 
subspace determined by
the hyperplane $H=L+S=Z\times S$ (the same definition as in
Section~\ref{sec:aelmrd}), and rotating: Free lines in the planes
corresponding to $r(f+\tspace)$, $r\in\F_{q^4}^\times$, are connected to
$rP=\F_q\bigl(0,r(ab^q-a^qb)\bigr)$ in the same way. The collection
$\mathcal{N}$ of $(q^4-q)(q^3+q^2+q+1)$ new planes determined in this
way is certainly $\Sigma$-invariant and contains $q^4-q$ planes
meeting $S$ in any particular point $P$. By construction,
$\mathcal{N}$ forms an exact cover of the free lines determined by
$\mathcal{L}_1$ (and $\mathcal{L}_0\cup\mathcal{N}$ forms an exact
cover of all lines disjoint from $S$), but $\mathcal{N}$ may cover
some lines meeting $S$ in a point more than once.

If for some value of $q$ 
the set $\mathcal{L}_0\cup\mathcal{N}$ still had $t=2$,
then the present construction would have been a big step towards the desired
$q$-analogue $\mathcal{F}_q$, leaving only the task to augment it by
$\gauss{4}{2}{q}$ further planes meeting $S$ in a line. Unfortunately,
however, it turns out that $\mathcal{L}_0\cup\mathcal{N}$ never has
$t=2$, rendering a construction of a $q$-analogue $\mathcal{F}_q$ in
this way impossible. This negative result will follow
from our subsequent analysis, which on the other hand will tell us
precisely 
how many planes should be removed from $\mathcal{L}_0\cup\mathcal{N}$ in
order to restore $t=2$. Fortunately, this number turns out
to be rather small. 

Let $\mathcal{N}_1\subset\mathcal{N}$ be the set of
$q^4-q=(q-1)(q^3+q^2+q)$ new planes passing through the special point
$P_1=\F_q(0,1)$. We are interested in finding the largest subset(s)
$\mathcal{N}_1'\subseteq\mathcal{N}_1$ consisting of planes mutually
intersecting in $P_1$. Denoting by $M_1'$ the maximum size of such a
subset $\mathcal{N}_1'$, it is clear from the preceding development
and $\Sigma$-invariance of $\mathcal{N}$ that $\mathcal{L}_0$ can then
be augmented by a subset $\mathcal{N}'$ of size
$M'=M_1'(q^3+q^2+q+1)$ without increasing $t$. If $\mathcal{N}'_1$ is
invariant under the subgroup of $\Sigma$ corresponding to
$\F_q^\times$ then
$\mathcal{N}'$ may be taken in the form
$\mathcal{N}'=\biguplus_{L\in\Sigma}L(\mathcal{N}_1')
=\biguplus_{r\in\F_{q^4}^\times/\F_q^\times}r\mathcal{N}_1'$,
making the augmented subspace code
$\mathcal{C}=\mathcal{L}_0\cup\mathcal{N}'$ again
$\Sigma$-invariant.\footnote{``$r\mathcal{N}_1'$ refers to the image
  of $\mathcal{N}_1'$ under $(x,y)\mapsto(x,ry)$.} 
If $\mathcal{N}_1'$ is not uniquely determined then there are many
further choices for $\mathcal{N}'$, which could lead to better overall
subspace codes during the final augmentation step.\footnote{Later we
  will see that the number of choices for $\mathcal{N}_1'$ is
  at least $(q^2)^{q^3+q^2+q+1}$;\ cf. Section~\ref{sec:ext}.}

Before writing down $\mathcal{N}_1$ in explicit form we will introduce some
further terminology. Relative to a $2$-dimensional subspace $Z\subset
W$, the letters $a,b,c,d$ will henceforth denote a basis of
$\F_{q^4}/\F_q$ such that $Z=\langle a,b\rangle$, $W=\langle
a,b,c\rangle$ and $\trace(d)=1$.\footnote{The element $d$ can be fixed
  once and for all, but $c$ depends on $Z$, of course.} Further we
set $\dickson(x,y)=xy^q-x^qy=\left|
  \begin{smallmatrix}
    x&y\\x^q&y^q
  \end{smallmatrix}\right|$ for $x,y\in\F_{q^4}$, which constitutes an
$\F_q$-bilinear, antisymmetric map with right annihilators
$\bigl\{y\in\F_{q^4};\dickson(x,y)=0\bigr\}=\F_qx$ and corresponding
right images
$\dickson(x,\F_{q^4})=\bigl\{z\in\F_{q^4};\trace(x^{-q-1}z)=0\bigr\}=x^{q+1}W$
(provided that $x\neq 0$). The latter follows from Hilbert's Satz~90,
using $z=xy^q-x^qy\iff x^{-q-1}z=(y/x)^q-y/x$. Since
$\dickson(x,y)=x\prod_{\lambda\in\F_q}(y-\lambda x)$, we also have
that $\F_q\dickson(x,y)$ depends only on the line $L=\langle
x,y\rangle$ of $\PG(\F_{q^4}/\F_q)$ (provided that $\F_qx\neq\F_qy$)
and is computed as the product of all points of $L$ in
$\F_{q^4}^\times/\F_q^\times$. Accordingly, we can write $\dickson(L)$
for $\F_q\dickson(x,y)$ and thus have a well-defined map
$L\mapsto\dickson(L)$ from lines to points of $\PG(\F_{q^4}/\F_q)$. As
shown above, $L\mapsto\dickson(L)$ maps the line pencil through
$\F_qx$ bijectively onto the plane $x^{q+1}W$,\footnote{This fact was
  already used implicitly in some proofs.} but we also have the
following

\begin{lemma}
  \label{lma:dickson}
  $L\mapsto\dickson(L)$ maps the lines contained in any plane $E$ of
  $\PG(\F_{q^4}/\F_q)$ bijectively onto the points of another plane
  $E'$. If $\uone\in\F_{q^4}^\times$ satisfies $\uone^q=-\uone$ then
  $(aW)'=a^{q+1}\uone W$ for $a\in\F_{q^4}^\times$.
\end{lemma}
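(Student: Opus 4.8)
The plan is to reduce the whole statement to the single special plane $E=W$, exploiting that $\dickson$ is "almost multiplicative". First I would record the identity $\dickson(ax,ay)=a^{q+1}\dickson(x,y)$ for $a\in\F_{q^4}^\times$ and $x,y\in\F_{q^4}$, which is immediate from $\dickson(x,y)=xy^q-x^qy$. Since the trace form $(x,y)\mapsto\trace(xy)$ on $\F_{q^4}/\F_q$ is non-degenerate, every plane of $\PG(\F_{q^4}/\F_q)$ has the shape $aW=\{z;\trace(a^{-1}z)=0\}$ for some $a\in\F_{q^4}^\times$, and multiplication by $a$ carries the lines of $W$ bijectively onto the lines of $aW$. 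Hence, writing $E'=\{\dickson(L);L\subset E\text{ a line}\}$ for the image point set, the identity gives $(aW)'=a^{q+1}W'$, and it remains to show that $W'$ is a plane and equals $\uone W$.

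For this I would prove two things. \textbf{(a)} \emph{Containment $W'\subseteq\uone W$.} Consider the $\F_q$-bilinear form $\beta(x,y)=\trace\!\bigl(\uone^{-1}\dickson(x,y)\bigr)$ on $\F_{q^4}$; it is alternating because $\dickson(x,x)=0$. Using $\uone^q=-\uone$ (so $\uone^{q^2}=\uone$ and $\uone^{q^3}=-\uone$) together with the Frobenius-invariance of the trace, a short rewriting turns it into $\beta(x,y)=\trace\!\bigl(\uone^{-1}(x+x^{q^2})y^{q^3}\bigr)$; by non-degeneracy of the trace form the radical of $\beta$ is therefore $\ker(x\mapsto x+x^{q^2})$, a $2$-dimensional $\F_q$-subspace, and it lies in $W$ because $x^{q^2}=-x$ forces $\trace(x)=x+x^q-x-x^q=0$. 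An alternating form on the $3$-dimensional space $W$ has radical of even codimension in $W$, so a radical already containing a $2$-dimensional subspace must be all of $W$; thus $\beta$ vanishes identically on $W$, which is exactly the statement $\dickson(x,y)\in\uone W$ for all $x,y\in W$. \textbf{(b)} \emph{Injectivity of $L\mapsto\dickson(L)$ on the lines of $W$.} Two distinct lines of the $3$-space $W$ meet in a point $\F_q u$, say $L_i=\langle u,v_i\rangle$ with $v_i\in W\setminus\F_q u$; then $\dickson(L_i)=\F_q\dickson(u,v_i)$ with $\dickson(u,v_i)\neq 0$, and $\dickson(L_1)=\dickson(L_2)$ would force $v_1-cv_2\in\ker\bigl(\dickson(u,\cdot)\bigr)=\F_q u$ for some $c\in\F_q^\times$, whence $L_1=L_2$, a contradiction.

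I would then finish by counting: $W$ has $\gauss{3}{2}{q}=q^2+q+1$ lines, which by (b) have $q^2+q+1$ distinct images, all lying by (a) in the plane $\uone W$ with its $\gauss{3}{1}{q}=q^2+q+1$ points; hence $W'=\uone W$ and $L\mapsto\dickson(L)$ is a bijection from the lines of $W$ onto the points of $\uone W$. Transporting this by multiplication with $a$, and using $(aW)'=a^{q+1}W'$ together with the fact that the lines of $aW$ are exactly the $aL$, gives for an arbitrary plane $E=aW$ a bijection from the lines of $E$ onto the points of the plane $E'=a^{q+1}\uone W$ — which is both assertions of the lemma. The only slightly delicate step is the rewriting in (a): one must track the twists $\uone^{q^i}$ and apply Frobenius-invariance of the trace at the right places, but this is routine. (An alternative, purely computational route to (a) expands $\trace\!\bigl(\uone^{-1}\dickson(x,y)\bigr)$ via the nested traces $\trace_{\F_{q^4}/\F_{q^2}}$ and $\trace_{\F_{q^2}/\F_q}$ and uses $\trace_{\F_{q^4}/\F_q}(x)=\trace_{\F_{q^4}/\F_q}(y)=0$.)
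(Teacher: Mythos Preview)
Your proof is correct and follows essentially the same strategy as the paper: reduce via $\dickson(ax,ay)=a^{q+1}\dickson(x,y)$ to the single plane $E=W$, establish injectivity of $L\mapsto\dickson(L)$ on the lines of a plane via the pencil argument, show the containment $\dickson(x,y)\in\uone W$ for $x,y\in W$ by analysing $\trace(\uone^{\pm 1}\dickson(x,y))$, and finish by counting.

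The only noticeable difference is in how the containment is obtained. The paper expands $\trace\bigl(\uone\,\dickson(x,y)\bigr)$ directly and factors it as $\uone(x+x^{q^2})(y+y^q+y^{q^2}+y^{q^3})$, which vanishes since $y\in W$. You instead rewrite the alternating form as $\beta(x,y)=\trace\bigl(\uone^{-1}(x+x^{q^2})y^{q^3}\bigr)$, read off its radical $\{x:x^{q^2}=-x\}$, check that this $2$-dimensional space lies in $W$, and then invoke the parity constraint on the rank of an alternating form over the $3$-space $W$ to force $\beta|_{W\times W}=0$. Your route is a bit more structural and makes the role of the radical explicit, while the paper's direct factorisation is marginally shorter; the underlying computation is the same in both cases.
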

Note that $\uone^q=-\uone$, or $\uone^{q-1}=-1$, is equivalent to
$\uone\in\F_q^\times$ for even $q$ and to
$\uone\notin\F_q^\times\wedge\uone^2\in\F_q^\times$ for odd $q$. In the
latter case $\F_q^\times\uone$ is the unique element of order $2$ in
$\F_{q^4}^\times/\F_q^\times$. Further note that every plane of
$\PG(\F_{q^4}/\F_q)$ has the form $aW$ for some $a\in\F_{q^4}^\times$
(by Singer's Theorem).
\begin{proof}
  Since any two lines in $E$ intersect and $L\mapsto\dickson(L)$ is
  injective on line pencils, it is clear that the $q^2+q+1$ points
  $\dickson(L)$ for $L\subset E$ are distinct.

  Now consider the special plane
  $E=W=\bigl\{x\in\F_{q^4};x+x^q+x^{q^2}+x^{q^3}=0\bigr\}$. For
  $x,y\in W$ we have
  \begin{align*}
    \trace\bigl(\uone\dickson(x,y)\bigr)&=\trace(\uone xy^q-\uone x^qy)\\
    &=\uone xy^q-\uone x^qy^{q^2}+\uone x^{q^2}y^{q^3}-\uone x^{q^3}y
    -(\uone x^qy-\uone x^{q^2}y^q+\uone x^{q^3}y^{q^2}-\uone xy^{q^3})\\
    &=\uone(x+x^{q^2})(y^q+y^{q^3})-\uone(x^q+x^{q^3})(y+y^{q^2})\\
    &=\uone(x+x^{q^2})(y^q+y^{q^3})+\uone(x+x^{q^2})(y+y^{q^2})\\
    &=\uone(x+x^{q^2})(y+y^q+y^{q^2}+y^{q^3})=0
  \end{align*}
  and hence $\dickson(x,y)\in \uone^{-1}W=\uone W$. Thus $W'=\uone W$, and then
  $\dickson(ax,ay)=a^{q+1}\dickson(x,y)$ yields $(aW)'=a^{q+1}\uone W$. 
\end{proof}
Finally, for a plane $E$ in $\PG(\F_{q^4}/\F_q)$ we define $\dickson(E)$
as the product of all points on $E$ in $\F_{q^4}^\times/\F_q^\times$
(this yields a map $E\mapsto\dickson(E)$ from planes to points of
$\PG(\F_{q^4}/\F_q)$ and is completely analogous to
the case of lines), and in the case $E\neq W$ another
projective invariant $\sickson(E)$ as
\begin{equation}
  \label{eq:sickson}
  \sickson(E)=\frac{\dickson(E)}{\dickson(Z)^{q+1}},\quad\text{where
    $Z=E\cap W$}.
\end{equation}
The reason for this extra definition will become clear in a moment (cf.\
the subsequent Lemma~\ref{lma:collision}).

Now we turn to the description of the new planes in $\mathcal{N}_1$.
By the reasoning in Section~\ref{sec:aelmrd} and since
$\dspace(Z,P_1)=\dspace(\langle
a,b\rangle,P_1)=\dickson(a,b)^{-1}\langle ax^q-a^qx,bx^q-b^qx\rangle$,
the planes in $\mathcal{N}_1$ are parametrized as $N=N(Z,P_1,g)$, where
$Z\subset W$ is $2$-dimensional and 
$g\colon Z\to\F_{q^4}$ is of the form
\begin{equation*}
  g(x)=\dickson(a,b)^{-1}\left(\lambda(dx^q-d^qx)+\mu(cx^q-c^qx)\right)
  =\frac{\dickson(\lambda d+\mu c,x)}{\dickson(a,b)}
\end{equation*}
with $\lambda\in\F_q^\times$,
$\mu\in\F_q$ ($q^4-q$ choices for $N$), and cover the $(q+1)q$ points
\begin{equation}
  \label{eq:cover}
  \F_q\left(x,\frac{\dickson(\lambda d+\mu
      c,x)}{\dickson(a,b)}+\nu\right),\quad\F_qx\in Z,\;\nu\in\F_q
\end{equation}
outside $S$.

We call a pair of new planes $N,N'\in\mathcal{N}_1$ a \emph{collision}
if $N,N'$ have a point outside $S$ (and hence a line through $P_1$) in
common. Such collisions are precisely the obstructions to adding $N,N'$
simultaneously to the expurgated LMRD code
$\mathcal{L}_0=\mathcal{L}\setminus\mathcal{L}_1$ of size
$q^8-q^7+q^3$. From Theorem~\ref{thm:rspace} (and its ``rotated''
analogues, so-to-speak) we know that collisions between $N=N(Z,P_1,g)$
and $N'=N(Z',P_1,g')$ can occur only if $Z\neq Z'$. In this case
$Z\cap Z'=\F_qz$ is a single point, so that every collision takes the form
\begin{equation}
  \label{eq:collision}
  \frac{\dickson(\lambda d+\mu
      c,z)}{\dickson(a,z)}+\nu=\frac{\dickson(\lambda' d+\mu'
      c,z)}{\dickson(a',z)}+\nu'
\end{equation}
with $z,a,a'$ spanning $W$. Rewriting the denominator as
$\dickson(a,z)$ makes the actual correspondence
$(Z,\lambda,\mu)\mapsto N$ depend on $z$. However, since
$\dickson(a,b)$ and $\dickson(a,z)$ differ only by a factor in
$\F_q^\times$, this dependence disappears in the projective view, where
$Z$ and the point $\F_q(\lambda d+\mu c)$ correspond collectively to a
set of $q-1$ new planes, viz.\ $N(Z,P_1,\F_q^\times g)$ with
$g(x)=\dickson(\lambda d+\mu c,x)/\dickson(a,b)$.\footnote{Of course
  this remark also applies when changing the generators $a,b$ of $Z$.}

Further note that setting $E=Z+\F_q(\lambda d+\mu c)$ 
gives a parametrization
of the $q^4-q=(q-1)(q^3+q^2+q)$ new planes in $\mathcal{N}_1$, $q-1$
planes at a time, by the $q^3+q^2+q$ planes $E\neq W$ of
$\PG(\F_{q^4}/\F_q)$.\footnote{Since the line $\langle c,d\rangle$ is skew
  to $Z$, the $q$ points $\F_q(d+\mu c)$, $\mu\in\F_q$, determine the
  $q$ planes $E\neq W$ above $Z$. Replacing $\lambda d+\mu c$ by
  $\lambda d+\mu c+\alpha a+\beta b$ has no effect on the plane
  $N(Z,P_1,g)$, since
  $\dickson(a,x),\dickson(b,x)\in\F_q\dickson(Z)$ for $x\in Z$ and
  hence $g$ is only changed inside the coset $g+\Hom(Z,\F_q)$.}


\begin{lemma}
  \label{lma:collision}
  Let $N=N(Z,P_1,g)$, $N'=N(Z',P_1,g')$ be planes in $\mathcal{N}_1$
  parametrized by distinct planes $E,E'$ of $\PG(\F_{q^4}/\F_q)$ in the fashion
  just described. Collisions between
  any of the $2(q-1)$ planes in $N(Z,P_1,\F_q^\times g)\uplus
  N(Z',P_1,\F_q^\times g')$ fall into the following two cases:
  \begin{enumerate}[(i)]
  \item $\sickson(E)\neq\sickson(E')$. In this case there are no
    collisions among the planes in
    $N(Z,P_1,\F_q^\times g)\uplus N(Z',P_1,\F_q^\times g')$.
  \item $\sickson(E)=\sickson(E')$. In this case any new plane in
    $N(Z,P_1,\F_q^\times g)$ collides with a unique new plane
    in $N(Z',P_1,\F_q^\times g')$ and vice versa, and we can select a
    maximum of $q-1$ mutually non-colliding planes from
    $N(Z,P_1,\F_q^\times g)\uplus N(Z',P_1,\F_q^\times g')$.
  \end{enumerate}
\end{lemma}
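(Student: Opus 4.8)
The plan is to turn the collision condition into a single $\F_q$-linear equation inside the quotient space $\F_{q^4}/\F_q$, read off the stated dichotomy from that equation, and only at the end match the two cases with the invariant $\sickson$ by an explicit computation with linearized polynomials.

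First, since $E\neq E'$, Theorem~\ref{thm:rspace} (in the ``rotated'' form already used to justify \eqref{eq:collision}) rules out collisions whenever $Z=Z'$, so I may assume $Z\neq Z'$ and put $\F_qz=Z\cap Z'$. The first coordinate of any common point of two of our planes lying outside $S$ must lie in $Z\cap Z'$, so by \eqref{eq:cover}--\eqref{eq:collision}, with $Z=\langle a,z\rangle$ and $Z'=\langle a',z\rangle$, the plane $N(Z,P_1,g)$ of the first group given by a pair $(\lambda,\mu)$ as in \eqref{eq:collision} collides with the plane $N(Z',P_1,g')$ of the second given by $(\lambda',\mu')$ precisely when $s-s'\in\F_q$, where $s=\dickson(\lambda d+\mu c,z)/\dickson(a,z)$ and $s'=\dickson(\lambda'd+\mu'c',z)/\dickson(a',z)$ (the $\nu,\nu'$ in \eqref{eq:collision} absorbing the $\F_q$-part). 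As $(\lambda,\mu)$ runs over the group $N(Z,P_1,\F_q^\times g)$ the vector $\lambda d+\mu c$ runs over the $\F_q^\times$-multiples of one fixed $e\in E\setminus W$, so $s$ runs over $\F_q^\times s_0$ for some $s_0$, and likewise $s'$ over $\F_q^\times s_0'$. A key preliminary point: $s_0\notin\F_q$, equivalently $\F_q\dickson(e,z)\neq\F_q\dickson(a,z)$ in $\PG(\F_{q^4}/\F_q)$, because $\F_q\dickson(e,z)=\dickson(\langle e,z\rangle)$ and $\F_q\dickson(a,z)=\dickson(Z)$ are $\dickson$-images of two distinct lines of the pencil through $\F_qz$ (distinct since $e\notin W\supseteq Z$), and $L\mapsto\dickson(L)$ is injective on that pencil (the fact recalled just before Lemma~\ref{lma:dickson}); the same argument, applied to the pencil through a fixed $x\in Z\setminus\{0\}$, shows that no two planes inside a single group collide, so the collision graph on the $2(q-1)$ planes under consideration is bipartite between the two groups.

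Passing to $\F_{q^4}/\F_q$, the classes $\bar s_0,\bar s_0'$ are nonzero, and collisions are governed entirely by the equation $\rho\bar s_0=\rho'\bar s_0'$ with $\rho,\rho'\in\F_q^\times$: this is solvable iff $\bar s_0$ and $\bar s_0'$ are $\F_q$-proportional. If they are not, no collision occurs among the $2(q-1)$ planes; if $\bar s_0'=t\bar s_0$ with $t\in\F_q^\times$, then each plane of the first group (scaling $\rho$) collides with exactly one plane of the second (scaling $\rho/t$) and with no other, so the collisions form a perfect matching of $q-1$ edges---and since the graph has no other edges, one may keep at most, and (picking one endpoint per edge) at least, $q-1$ mutually non-colliding planes. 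It remains to make ``$\bar s_0$ proportional to $\bar s_0'$'' intrinsic. I would note that $\Theta_E(\F_qz):=\F_q\bar s_0\in\PG(\F_{q^4}/\F_q)$ depends only on $E$ and on the point $\F_qz\in E\cap W$ (scaling $z$ rescales $\dickson(\cdot,z)$; replacing $e$ by $e+z_0$ with $z_0\in Z$ changes $s_0$ only by $\dickson(z_0,z)/\dickson(a,z)\in\F_q$; changing the generator $a$ of $Z$ rescales $\dickson(a,z)$ by $\F_q^\times$), and that by bilinearity of $\dickson$ and pencil-injectivity $\F_qx\mapsto\Theta_E(\F_qx)$ is a projective bijection of the line $Z$ onto a line of $\PG(\F_{q^4}/\F_q)$.

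The main obstacle is the identification $\Theta_E(\F_qz)=\Theta_{E'}(\F_qz)\iff\sickson(E)=\sickson(E')$. Here I would compute $\sickson(E)$ via \eqref{eq:sickson}: the $q^2+q+1$ points of $E$ split into the $q+1$ points of $Z$ together with the $q^2$ points $\F_q(e+z_0)$, $z_0\in Z$, so that
\begin{equation*}
  \dickson(E)=\dickson(Z)\prod_{z_0\in Z}(e+z_0)=\dickson(Z)\cdot M_Z(e),
\end{equation*}
$M_Z$ being the monic $\F_q$-linearized polynomial whose root set is exactly $Z$, whence $\sickson(E)=M_Z(e)/\dickson(Z)^q$. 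Factoring $M_Z$ along a flag $\F_qz\subset Z$ and rewriting everything through the Artin--Schreier map $\wp\colon x\mapsto x^q-x$---using Hilbert's Satz~90 (already invoked in the excerpt for $\dickson(x,\F_{q^4})=x^{q+1}W$) to identify $\dickson(x,y)$ with $\wp(y/x)$ up to scalars---should exhibit $\sickson(E)$ as a bijective function of the pair $\bigl(\Theta_E(\F_qz),\F_qz\bigr)$; equivalently, both $E\mapsto\sickson(E)$ and $E\mapsto\Theta_E(\F_qz)$ are injective on the planes $E\neq W$ through the fixed point $\F_qz$ and have the same fibres there. Since $E$ and $E'$ both pass through $\F_qz$, this yields the equivalence, so that the no-collision case is exactly $\sickson(E)\neq\sickson(E')$ (assertion (i)) and the perfect-matching case is exactly $\sickson(E)=\sickson(E')$ (assertion (ii)), completing the proof. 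Everything apart from this last Satz~90 computation is bookkeeping with the parametrization \eqref{eq:U(Z,t,f)} and the antisymmetric form $\dickson$.
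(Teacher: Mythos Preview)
Your reduction of the collision condition to $\F_q$-proportionality of $\bar s_0,\bar s_0'$ in $\F_{q^4}/\F_q$, and the resulting bipartite ``perfect matching or nothing'' dichotomy, is exactly the paper's argument (phrased there via $\AGL(1,\F_q)$-orbits). Two points are not settled, however. First, in the case $Z=Z'$ you rule out collisions but never check that $\sigma(E)\neq\sigma(E')$; the lemma places this case under (i), so this needs proof. The paper does it by observing that $E\mapsto\dickson(E)$ is a bijection on all planes (via $\gcd(q^3+q^2+q+1,q^2+q+1)=1$); your formula $\sigma(E)=\F_q M_Z(e)/\dickson(Z)^q$ would also yield it, since $M_Z$ has kernel exactly $Z$, but you do not carry this out.

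Second, and more seriously, your final identification is misstated: you claim that $E\mapsto\sigma(E)$ is \emph{injective} on the planes $E\neq W$ through a fixed $\F_qz$. This is false whenever $\F_qz\neq\F_q\uone$, because exactly $q$ planes of $\mathcal{O}^*\setminus\{W\}$ pass through such a point and all have $\sigma$-value $\F_q$; indeed, this non-injectivity is precisely where case~(ii) lives. What is needed is only that $E\mapsto\sigma(E)$ and $E\mapsto\Theta_E(\F_qz)$ have the \emph{same fibres}, and your flag-factorisation sketch does not establish this. The paper's route is a direct computation: with $g(z)=s_0$ and $u^q-u=\prod_{\nu\in\F_q}(u+\nu)$ one gets
\[
\F_q\bigl(g(z)^q-g(z)\bigr)=\dickson(Z)^{-q}\prod_{\substack{L\subset E\\\F_qz\in L,\;L\neq Z}}\dickson(L)=\sigma(E)\cdot(\F_qz)^q,
\]
the last equality by regrouping the points of $E$ along the pencil through $\F_qz$. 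Since $\F_q\wp(s_0)$ and $\Theta_E(\F_qz)$ determine each other, cancelling $(\F_qz)^q$ gives $\sigma(E)=\sigma(E')\iff\Theta_E(\F_qz)=\Theta_{E'}(\F_qz)$ immediately. Your $M_Z(e)$ approach can be made to reach the same endpoint, but the pencil decomposition is shorter than the coset decomposition you propose, and it avoids the erroneous injectivity claim.
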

\begin{proof}
  First suppose $Z=Z'$. In this case there are no
  collisions, and we must show $\sickson(E)\neq\sickson(E')$ or,
  equivalently, $\dickson(E)\neq\dickson(E')$. The planes of
  $\PG(\F_{q^4}/\F_q)$ have the form $rW$ with $r$ running through a
  system of coset representatives for $\F_q^\times$ in
  $\F_{q^4}^\times$, and clearly
  $\dickson(rW)=r^{q^2+q+1}\dickson(W)$. Since
  $\gcd(q^3+q^2+q+1,q^2+q+1)=1$, $E\mapsto\dickson(E)$ is a bijection
  and the result follows.

  Now suppose $Z\neq Z'$ and set $Z\cap Z'=\F_qz$. Assuming w.l.o.g.\
  $g(x)=\dickson(d+\mu c,x)/\dickson(a,x)$, we have from
  \eqref{eq:cover} that the points on $N(Z,P_1,\lambda g)$ of the form
  $\F_q(z,y)$ are those with $y\in\lambda g(z)+\F_q$,
  i.e.\ the $q$ points $\neq P_1$ on the line through $\F_q\bigl(z,\lambda
  g(z)\bigr)$ and $P_1$. Hence the points $\F_q(z,y)$ on the planes in
  $N(Z,P_1,\F_q^\times g)$ are those with $y\in\F_q^\times g(z)+\F_q$, an
  orbit of the affine group $\AGL(1,\F_q)=\{u\mapsto \lambda
  u+\nu;\lambda\in\F_q^\times,\nu\in\F_q\}$ acting on $\F_{q^4}$. The orbits
  corresponding to $N,N'$ are either disjoint and there are no collisions,
  or the orbits coincide and the planes in $N(Z,P_1,\F_q^\times g)$
  and $N(Z',P_1,\F_q^\times g')$ are matched up in pairs covering the
  same line $L$ through $P_1$ and a point of the form $\F_q\bigl(z,\lambda
  g(z)\bigr)$. In this case we can select at most one plane from
  each matching pair without introducing collisions. If we do so, the
  selected planes will cover the same lines $L$ as the corresponding
  planes in $N(Z,P_1,\F_q^\times g)$, say,
  and hence there is no obstruction to selecting exactly one plane from
  each pair.

  It remains to show that the two cases just described are characterized by 
  $\sickson(E)\neq\sickson(E')$ and $\sickson(E)=\sickson(E')$,
  respectively. For this we use the fact that
  $\F_q^\times u_1+\F_q=\F_q^\times u_2+\F_q$, or
  $\F_qu_1+\F_q=\F_qu_2+\F_q$, is equivalent to 
  $\F_q(u_1^q-u_1)=\F_q(u_2^q-u_2)$. This is an instance of the equivalence
  $\dickson(L_1)=\dickson(L_2)\iff L_1=L_2$ for lines $L_1,L_2$
  through the same point (in this case the point
  $\F_q=\F_q1$).\footnote{It is also straightforward to show directly
    that $u\mapsto(u^q-u)^{q-1}$ is a 
    separating invariant for the orbits of
    $\AGL(1,\F_q)$ on $\F_{q^4}$, i.e.\ $\F_q^\times
    u_1+\F_q=\F_q^\times u_2+\F_q$ iff 
    $(u_1^q-u_1)^{q-1}=(u_2^q-u_2)^{q-1}$.}
  Using this fact and $u^q-u=\prod_{\lambda\in\F_q}(u+\lambda)$ we can
  rewrite the collision criterion \eqref{eq:collision} as
  \begin{align*}
    g(z)^q-g(z)&=\prod_{\nu\in\F_q}\frac{\dickson(d+\mu
      c,z)+\nu\dickson(a,z)}{\dickson(a,z)}\\
    &=\dickson(a,z)^{-q}\prod_{\nu\in\F_q}\dickson(d+\mu
      c+\nu a,z)\\
      &\in\dickson(Z)^{-q}\prod_{\substack{L\subset E\\\F_qz\in
          L\wedge L\neq Z}}\dickson(L)\\
      &=\dickson(Z)^{-q-1}\prod_{\substack{L\subset E\\\F_qz\in
          L}}\dickson(L)\\
      &=\frac{\dickson(E)}{\dickson(Z)^{q+1}}\cdot(\F_qz)^{q}\\
      &=\sickson(E)\cdot(\F_qz)^{q}=\sickson(E')\cdot(\F_qz)^{q},
  \end{align*}
  where we have used that the product of all points in $E$ on the
  $q+1$ lines through $\F_qz$ involves $\F_qz$ exactly $q+1$ times and
  all other points exactly once. Cancelling the factor $(\F_qz)^{q}$
  completes the proof of the lemma.
\end{proof}

As a consequence of Lemma~\ref{lma:collision} we obtain that there exist
subsets $\mathcal{N}_1'\subseteq\mathcal{N}_1$ of size
$\#\mathcal{N}_1'=(q-1)\cdot\#\image(\sickson)$ which
can be added to the expurgated LMRD code $\mathcal{L}_0$ while still maintaining
$t=2$. For this we choose for each point $Q$ in the image of
$\sickson$ a plane $E\neq W$ with $\sickson(E)=Q$ and take
$\mathcal{N}_1'$ as the union of all sets $N(Z,P_1,\F_q^\times
g)$ parametrized by these planes. In the smallest case $q=2$, where
$\#N(Z,P_1,\F_q^\times g)=1$,
such a set $\mathcal{N}_1'$ is clearly maximal.\footnote{Whether such
  sets $\mathcal{N}_1'$ are maximal in general remains an open problem.}

Hence our next goal is to obtain more detailed information
on the map
$E\mapsto\sickson(E)$ with domain the set
of $q^3+q^2+q$ planes $E\neq W$ in $\PG(\F_{q^4}/\F_q)$, and in
particular determine its image size. As a first step towards this
we establish an explicit formula for $\sickson(E)$. The formula is
stated in terms of the absolute invariant
$\sickson(E)^{q-1}\in\F_{q^4}^\times$, which is obtained by composing
$E\mapsto\sickson(E)$ with the group isomorphism
$\F_{q^4}^\times/\F_q^\times\to(\F_{q^4}^\times)^{q-1}$,
$r\F_q^\times\mapsto r^{q-1}$. 

\begin{lemma}
  \label{lma:sickson}
  For a plane $E=aW\neq W$ of $\PG(\F_{q^4}/\F_q)$ we have
  \begin{equation*}
    \sickson(E)^{q-1}=1-\frac{a^{(q-1)(q^2+1)}-1}{a^{q-1}-1}.
  \end{equation*}
\end{lemma}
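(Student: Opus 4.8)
The plan is to compute $\sickson(E)^{q-1} = \dickson(E)^{q-1}/\dickson(Z)^{(q-1)(q+1)}$ directly for $E = aW$ with $Z = E \cap W$, using the multiplicativity of $\dickson$ together with Lemma~\ref{lma:dickson}. Recall that $\dickson(E)$ is the product of all $q^2+q+1$ points of $E$ in $\F_{q^4}^\times/\F_q^\times$. Since $E = aW$, these points are $a$ times the points of $W$, so $\dickson(aW) = a^{q^2+q+1}\dickson(W)$; passing to the absolute invariant, $\dickson(aW)^{q-1} = a^{(q-1)(q^2+q+1)}\dickson(W)^{q-1}$. Similarly I need $\dickson(Z)$ where $Z = aW \cap W$. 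The first subtlety is to get an explicit handle on $Z$: a point $\F_q x$ lies in $aW \cap W$ iff $x \in W$ and $a^{-1}x \in W$, i.e.\ $\trace(x) = 0$ and $\trace(a^{-1}x) = 0$. This is a $2$-dimensional $\F_q$-subspace (the intersection of two distinct hyperplanes, distinct because $a \notin \F_q$), hence a line $Z$ of $\PG(\F_{q^4}/\F_q)$.

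Next I would express $\dickson(Z)$ in closed form. One clean route: $\dickson(Z) = \F_q\dickson(x,y)$ for any basis $x,y$ of $Z$, and $\dickson(x,y)^{q-1}$ is the absolute invariant; so it suffices to produce one explicit nonzero element of $\dickson(x,\F_{q^4}) \cap \dickson(y,\F_{q^4})$ — actually more directly, to identify the point $\dickson(Z)$ by the line-to-point map. By the remark preceding Lemma~\ref{lma:dickson}, $L \mapsto \dickson(L)$ maps the line pencil through $\F_q x$ bijectively onto the plane $x^{q+1}W$; applying this with $x$ a chosen generator of $Z$ and noting the pencil member $Z$ itself, $\dickson(Z) \in x^{q+1}W$. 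The cleanest approach is probably to pick a convenient generator. Since $Z \subseteq W$ and $Z \subseteq aW$, Lemma~\ref{lma:dickson} gives $Z' = \dickson(\text{lines of }W\text{ through ...})$; more to the point, $Z$ sits inside both planes $W$ and $aW$, and I can compute $\dickson(Z)$ as the product of its $q+1$ points. I expect the formula $\dickson(Z)^{q-1}$ to come out proportional to something like $(a^{q-1}-1)$ or a close variant, after using Hilbert~90 to parametrize $Z$: writing a point of $Z$ as $\F_q x$ with $x + x^q + x^{q^2} + x^{q^3} = 0$ and $a^{-1}x + (a^{-1}x)^q + \dots = 0$, subtracting gives a linear relation that pins down $x$ up to $\F_q$ — this is the computational heart of the argument.

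Once $\dickson(W)^{q-1}$, $a^{(q-1)(q^2+q+1)}$, and $\dickson(Z)^{q-1}$ are all in hand, I would assemble
\[
  \sickson(E)^{q-1} = \frac{a^{(q-1)(q^2+q+1)}\dickson(W)^{q-1}}{\dickson(Z)^{(q-1)(q+1)}}
\]
and simplify. The target identity $\sickson(E)^{q-1} = 1 - \dfrac{a^{(q-1)(q^2+1)}-1}{a^{q-1}-1}$ rewrites, upon clearing denominators, as $\sickson(E)^{q-1}(a^{q-1}-1) = (a^{q-1}-1) - (a^{(q-1)(q^2+1)} - 1) = a^{q-1} - a^{(q-1)(q^2+1)}$, i.e.\ $\sickson(E)^{q-1} = \dfrac{a^{q-1} - a^{(q-1)(q^2+1)}}{a^{q-1}-1} = a^{q-1}\cdot\dfrac{1 - a^{(q-1)q^2}}{a^{q-1}-1}$. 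So the real claim is $\dickson(Z)^{(q-1)(q+1)}/\dickson(W)^{q-1} = a^{(q-1)(q^2+q+1)} \cdot \dfrac{a^{q-1}-1}{a^{q-1}-a^{(q-1)(q^2+1)}}$, and after cancelling a power of $a$ this should reduce to an identity purely about $\dickson(Z)$ and $\dickson(W)$ in terms of $a^{q-1}$; writing $\beta = a^{q-1}$ (so $\beta$ has norm $1$ over $\F_q$, and $\beta^{q^2+1}$ is its norm to $\F_{q^2}$... actually $\beta^{1+q+q^2+q^3}=1$), I expect everything to collapse via Hilbert~90 for the extensions $\F_{q^4}/\F_{q^2}$ and $\F_{q^4}/\F_q$.

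The main obstacle I anticipate is the explicit determination of $\dickson(Z)$ for $Z = aW \cap W$: one must correctly identify this line, choose a workable generator, and carry the Hilbert~90 bookkeeping through two nested extensions without sign or exponent errors — in particular keeping straight that $\dickson$ is antisymmetric and that the relevant invariant is the $(q-1)$st power. The appearance of the exponent $(q-1)(q^2+1)$ strongly suggests the norm map $\F_{q^4}^\times \to \F_{q^2}^\times$, $x \mapsto x^{1+q^2}$, enters when comparing $\dickson(W)$ (governed by the involution $\uone$ with $\uone^{q-1}=-1$ from Lemma~\ref{lma:dickson}) against $\dickson(Z)$; reconciling the two computations cleanly is where care is needed. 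If the direct route proves messy, a fallback is to verify the stated formula by checking it is multiplicative-compatible (both sides behave correctly under $a \mapsto \lambda a$, $\lambda \in \F_q^\times$, giving $1$, and the right side is manifestly invariant) and then pinning it down on a spanning set of cases, but the honest computation via Lemma~\ref{lma:dickson} should go through.
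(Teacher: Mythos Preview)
Your plan coincides with the paper's proof. The ``computational heart'' you flag is dispatched there exactly as you suggest: subtracting $a^{q^3}\bigl((X/a)^{q^3}+\cdots+(X/a)\bigr)$ from $X^{q^3}+X^{q^2}+X^q+X$ cancels the top term and leaves a degree-$q^2$ linearized polynomial whose root space is the $2$-dimensional $Z=W\cap aW$ (so not ``$x$ up to $\F_q$'' but $x$ up to all of $Z$); the ratio of its constant to leading coefficient gives $\dickson(Z)^{q-1}=\dfrac{1-a^{q^3-1}}{1-a^{q^3-q^2}}$ directly, and together with $\dickson(W)^{q-1}=-1$ (obtained from $\prod_{w\in W\setminus\{0\}}w=1$) the assembly and simplification to the stated formula are mechanical---no Hilbert~90 or fallback verification needed.
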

\begin{proof}
  First we show $\dickson(W)=\F_q\uone$ or, equivalently,
  $\dickson(W)^{q-1}=-1$, with $\uone$ as in
  Lemma~\ref{lma:dickson}. From $X^{q^3}+X^{q^2}+X^q+X=\prod_{w\in
    W}(X-w)$ the product of all elements in $W\setminus\{0\}$ is $1$.
  For a point $P=\F_qx$ the quantity $\dickson(P)^{q-1}=x^{q-1}$
  differs from $\prod_{x\in
    P\setminus\{0\}}x=\prod_{\lambda\in\F_q^\times}(\lambda
  x)
  =-x^{q-1}$ just by its sign. Hence we have
  $\dickson(W)^{q-1}=(-1)^{q^2+q+1}\prod_{w\in
    W\setminus\{0\}}w=(-1)^{q^2+q+1}=-1$ as claimed.\footnote{Note
    that the last equality is trivially true for even $q$.}

  
  This gives $\dickson(aW)=\F_qa^{q^2+q+1}\uone$ and $\dickson(aW)^{q-1}
  =-a^{q^3-1}$ for any $a\in\F_{q^4}^\times$.
  
  Since $\sickson(aW)^{q-1}=\dickson(aW)^{q-1}/\dickson(Z)^{q^2-1}$,
  where $Z=W\cap aW$, we also need to compute $\dickson(W\cap aW)$. This
  can be done as follows:

  The $\F_q$-space $W\cap aW$ is the set of zeros of the polynomial
  \begin{align*}
    &X^{q^3}+X^{q^2}+X^q+X-a^{q^3}\left(
      (a^{-1}X)^{q^3}-(a^{-1}X)^{q^2}-(a^{-1}X)^{q}-a^{-1}X\right)\\
    &\quad=(1-a^{q^3-q^2})X^{q^2}+(1-a^{q^3-q})X^{q}+(1-a^{q^3-1})X\\
    &\quad=(1-a^{q^3-q^2})\left(X^{q^2}+\frac{1-a^{q^3-q}}{1-a^{q^3-q^2}}X^q
      +\frac{1-a^{q^3-1}}{1-a^{q^3-q^2}}X\right),
  \end{align*}
  and hence
  \begin{align*}
    \dickson(W\cap aW)^{q-1}&=\frac{1-a^{q^3-1}}{1-a^{q^3-q^2}},\\
    \sickson(aW)^{q-1}&=\frac{-a^{q^3-1}(1-a^{q^3-q^2})^{q+1}}{(1-a^{q^3-1})^{q+1}}\\
    &=-\frac{a^{q^3-1}(1-a^{1-q^3})(1-a^{q^3-q^2})}{(1-a^{1-q})(1-a^{q^3-1})}\\
    &=\frac{1-a^{q^3-q^2}}{1-a^{1-q}}=\frac{a^{q-1}-a^{q^3-q^2+q-1}}{a^{q-1}-1}\\ 
    &=1-\frac{a^{q^3-q^2+q-1}-1}{a^{q-1}-1}\\
    &=1-\frac{a^{(q-1)(q^2+1)}-1}{a^{q-1}-1},
  \end{align*}
  as asserted.
\end{proof}
From Lemma~\ref{lma:sickson} it is clear that $\sickson(E)=\F_q$ for
the planes of the form $E=a^{q+1}W\neq W$ and no other planes. Since
there are $q^2$ such planes, we have that $\#\image(\sickson)\leq
q^3+q^2+q-(q^2-1)=q^3+q+1$. It turns out that equality holds in this
bound, and hence a maximum of $\#\mathcal{N}_1'=(q-1)(q^3+q+1)$ planes
passing through any given point $P\in S$ can be added to
$\mathcal{L}_0$ without increasing $t$. Before proving this theorem,
we note that the existence of collisions already implies that a
$q$-analogue of the Fano plane cannot be constructed by our present
method.

\begin{theorem}
  \label{thm:sickson}
  Let $\mathcal{L}_0$ be the plane subspace code of size $q^8-q^7+q^3$
  obtained from the lifted Gabidulin code $\mathcal{L}$ by removing
  all planes $\graph_f$ corresponding to binomials $f(x)=r(ux^q-u^qx)$
  with $r\in\F_{q^4}^\times$, $u\in\F_{q^4}\setminus W$. Then
  $\mathcal{L}_0$ can be augmented by $(q^4-1)(q^3+q+1)$ new planes
  meeting $S$ in a point, $(q-1)(q^3+q+1)$ of them passing through
  any point $P\in S$, to a subspace code $\mathcal{C}$ with size
  $\#\mathcal{C}=q^8+q^5+q^4-q-1$. Moreover, $\mathcal{C}$
  may be chosen as a $\Sigma$-invariant code.
\end{theorem}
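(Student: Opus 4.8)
The plan is to combine Lemma~\ref{lma:collision} with a careful count of the image of $\sickson$, so that the resulting augmentation is both large and $\Sigma$-invariant. First I would recall the set-up: the expurgated code $\mathcal{L}_0=\mathcal{L}\setminus\mathcal{L}_1$ has $\#\mathcal{L}_0=q^8-q^7+q^3$, and by the general discussion preceding Lemma~\ref{lma:collision}, any family $\mathcal{N}_1'\subseteq\mathcal{N}_1$ of pairwise non-colliding new planes through $P_1=\F_q(0,1)$ can be rotated by $\Sigma$ to yield an augmenting family $\mathcal{N}'=\biguplus_{r\in\F_{q^4}^\times/\F_q^\times}r\mathcal{N}_1'$ with $\#\mathcal{N}'=\#\mathcal{N}_1'\cdot(q^3+q^2+q+1)$, provided $\mathcal{N}_1'$ is invariant under the $\F_q^\times$-subgroup of $\Sigma$. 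Since the parametrization of $\mathcal{N}_1$ by planes $E\neq W$ of $\PG(\F_{q^4}/\F_q)$ already groups planes $q-1$ at a time into $\F_q^\times$-orbits $N(Z,P_1,\F_q^\times g)$, and since by Lemma~\ref{lma:collision}(i) two such orbits parametrized by $E,E'$ are collision-free whenever $\sickson(E)\neq\sickson(E')$, it suffices to pick one plane $E$ from each fibre of $\sickson$ and take $\mathcal{N}_1'$ as the union of the corresponding orbits $N(Z,P_1,\F_q^\times g)$. This gives $\#\mathcal{N}_1'=(q-1)\cdot\#\image(\sickson)$, hence $\#\mathcal{C}=q^8-q^7+q^3+(q-1)\cdot\#\image(\sickson)\cdot(q^3+q^2+q+1)$, and this code is $\Sigma$-invariant by construction.

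The remaining task — and the main obstacle — is to prove $\#\image(\sickson)=q^3+q+1$. The upper bound $\#\image(\sickson)\le q^3+q+1$ was already noted in the text: by Lemma~\ref{lma:sickson}, $\sickson(aW)^{q-1}=1-\frac{a^{(q-1)(q^2+1)}-1}{a^{q-1}-1}$ equals $1$ (i.e.\ $\sickson(E)=\F_q$) exactly when $a^{(q-1)(q^2+1)}=1$, equivalently $a^{q-1}\in\mu_{q^2+1}$ (or $a^{q-1}=1$), and one checks there are precisely $q^2-1$ planes $E=a^{q+1}W\neq W$ with this property, so at least $q^2-1$ distinct planes collapse to the single value $\F_q$, trimming the domain of size $q^3+q^2+q$ down to at most $q^3+q+1$ image points. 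For the matching lower bound I would analyze the map $t\mapsto 1-\frac{t^{q^2+1}-1}{t-1}$ on the cyclic group $C:=(\F_{q^4}^\times)^{q-1}\cong\Z/(q^3+q^2+q+1)$ of $(q-1)$-st powers (with the value at $t=1$ understood as $1-(q^2+1)=-q^2$, the limit/derivative value): show that its fibres over every point except the "large" fibre $\{1\}$ (preimage of $\sickson=\F_q$) are small, of size bounded by a constant, so that the image has size $q^3+q^2+q-(q^2-1)=q^3+q+1$. Equivalently, I would rewrite $1-\sickson(aW)^{q-1}=\frac{t^{q^2+1}-1}{t-1}=1+t+t^2+\cdots+t^{q^2}$ with $t=a^{q-1}$ and bound the number of solutions $t\in C$ to $\sum_{i=0}^{q^2}t^i=c$ for fixed $c$ — a polynomial of degree $q^2$ in $t$, so over $\F_{q^4}$ it has at most $q^2$ roots, but one needs the sharper statement that distinct values $c$ arising from distinct $t\in C\setminus\{1\}$ genuinely account for $q^3+q$ distinct images. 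A clean way is to observe that $t$ and $t^{-1}$ give the same value of $(1+t+\cdots+t^{q^2})/t^{q^2/2}$-type symmetric combination only in controlled ways, or more directly to exhibit $q^3+q+1$ planes with pairwise distinct $\sickson$-values by an explicit choice of representatives $a$ in $\F_{q^4}^\times/\F_q^\times$.

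Concretely, after establishing $\#\image(\sickson)=q^3+q+1$ the size computation is immediate: $(q-1)(q^3+q+1)$ new planes pass through $P_1$, hence through each $P\in S$ by $\Sigma$-invariance, and
\begin{equation*}
  \#\mathcal{C}=q^8-q^7+q^3+(q-1)(q^3+q+1)(q^3+q^2+q+1)
  =q^8-q^7+q^3+(q^4-1)(q^3+q+1),
\end{equation*}
and expanding $(q^4-1)(q^3+q+1)=q^7+q^5+q^4-q^3-q-1$ gives $\#\mathcal{C}=q^8+q^5+q^4-q-1$ as asserted. The facts that $\mathcal{C}$ is a genuine subspace code ($t=2$) and that it is $\Sigma$-invariant then follow directly from Lemma~\ref{lma:collision}(i) (no collisions among the chosen orbits, and none with the old planes of $\mathcal{L}_0$ by Theorem~\ref{thm:rspace} and its rotated analogues within a fixed hyperplane) together with the observation, already recorded in the text, that new planes in different hyperplanes above $S$ cannot share a line because $Z\mapsto\F_q\dickson(a,b)$ is injective. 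The real work, therefore, is entirely the combinatorial/number-theoretic estimate on the fibres of $\sickson$, and that is where I would concentrate the effort.
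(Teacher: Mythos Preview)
Your framework is correct and matches the paper's: reduce to computing $\#\image(\sickson)$, then invoke Lemma~\ref{lma:collision}(i) to assemble a $\Sigma$-invariant $\mathcal{N}'$ from one $\F_q^\times$-orbit per $\sickson$-fibre. The size computation at the end is fine. There is a small slip in your upper-bound count (there are $q^2$, not $q^2-1$, planes $E=a^{q+1}W\neq W$; the arithmetic then reads $q^3+q^2+q-(q^2-1)=q^3+q+1$ as a bound on the image), but that is harmless.

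The genuine gap is the lower bound $\#\image(\sickson)\geq q^3+q+1$, which you explicitly leave open. None of the approaches you sketch actually closes it: the degree bound on $\sum_{i=0}^{q^2}t^i=c$ gives fibres of size $\leq q^2$, far too weak; a $t\leftrightarrow t^{-1}$ symmetry argument does not by itself force injectivity; and ``exhibit $q^3+q+1$ explicit representatives'' is the statement to be proved, not a method. What you are missing is a single structural observation that makes the injectivity immediate. Write the putative equality $\sickson(aW)^{q-1}=\sickson(bW)^{q-1}$, using Lemma~\ref{lma:sickson} and setting $x=a^{q-1}$, $y=b^{q-1}$, as
\[
\frac{x-1}{y-1}=\frac{x^{q^2+1}-1}{y^{q^2+1}-1}.
\]
The key point is that the right-hand side lies in $\F_{q^2}$ (since $x^{q^2+1},y^{q^2+1}$ are norms from $\F_{q^4}$ to $\F_{q^2}$). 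Hence the left-hand side is $\F_{q^2}$-rational as well, and applying the $q^2$-Frobenius gives a second, independent equation
\[
\frac{x-1}{y-1}=\frac{x^{q^2}-1}{y^{q^2}-1}.
\]
Rewriting both as $\sum_{i=0}^{q^2}x^i=\sum_{i=0}^{q^2}y^i$ and $\sum_{i=0}^{q^2-1}x^i=\sum_{i=0}^{q^2-1}y^i$ and subtracting yields $x^{q^2}=y^{q^2}$, hence $x=y$. This shows $\sickson$ is injective on the $q^3+q$ planes not of the form $u^{q+1}W$, giving $\#\image(\sickson)=q^3+q+1$ exactly. That is the whole argument in the paper, and it is precisely the step your proposal lacks.
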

\begin{proof}
  As discussed above, we need only show that the values of $\sickson$
  on the $q^3+q$ planes not of the form $a^{q+1}W$ are distinct. This
  is equivalent to
  \begin{equation}
    \label{eq:xy}
    \frac{x-1}{y-1}\neq\frac{x^{q^2+1}-1}{y^{q^2+1}-1}
  \end{equation}
  for any pair of distinct elements $x,y\in\F_{q^4}^\times$ that are
  ($q-1$)-th powers but not ($q^2+1$)-th roots of unity.

  Assume by contradiction that equality holds in \eqref{eq:xy} for
  some pair $x,y$. Then, since the right-hand side is in the subfield
  $\F_{q^2}$, we can conclude that also
  \begin{equation*}
    \frac{x-1}{y-1}=\left(\frac{x-1}{y-1}\right)^{q^2}
    =\frac{x^{q^2}-1}{y^{q^2}-1}.
  \end{equation*}
  The two equations can be rewritten as
  \begin{align*}
    \sum_{i=0}^{q^2}x^i&=\frac{x^{q^2+1}-1}{x-1}=\frac{y^{q^2+1}-1}{y-1}
    =\sum_{i=0}^{q^2}y^i,\\
    \sum_{i=0}^{q^2-1}x^i&=\frac{x^{q^2}-1}{x-1}=\frac{y^{q^2}-1}{y-1}
    =\sum_{i=0}^{q^2-1}y^i,    
  \end{align*}
  and together imply $x^{q^2}=y^{q^2}$ and hence $x=y$; contradiction.
\end{proof}
\begin{remark}
  \label{rmk:sickson}
  The map $\F_qa\mapsto\sickson(aW)$ leaves each coset of the subgroup 
  consisting of the ($q+1$)-th powers (or ($q^2+1$)-th roots of unity) in
  $\F_{q^4}^\times/\F_q^\times$ invariant and induces bijections on
  all nontrivial cosets; in particular, the set of values excluded
  from $\image(\sickson)$ consists of the $q^2$ points $\neq\F_q$ in
  $\PG(\F_{q^4}/\F_q)$ that are of the form $\F_qa^{q+1}$.

  This refinement of Theorem~\ref{thm:sickson} follows from
  \begin{align*}
    \sickson(aW)^{(q-1)(q^2+1)}&=\left(\frac{a^{q-1}-a^{(q-1)(q^2+1)}}
      {a^{q-1}-1}\right)^{q^2+1}=\left(\frac{a^{q-1}(1-a^{q^3-q^2})}
      {a^{q-1}-1}\right)^{q^2+1}\\
    &=\left(\frac{a^{q^3-q^2}(1-a^{q-1}}
      {a^{q^3-q^2}-1}\right)\left(\frac{a^{q-1}(1-a^{q^3-q^2})}
      {a^{q-1}-1}\right)\\
    &=a^{q^3-q^2+q-1}=a^{(q-1)(q^2+1)},
  \end{align*}
  which shows the claimed coset invariance, and the known
  behaviour of $\F_qa\mapsto\sickson(aW)$ on the subgroup of ($q+1$)-th
  powers and its complement. In the next section we will discuss the
  geometric significance of this subgroup.
\end{remark}

\section{Extensions}\label{sec:ext}

The subspace code $\mathcal{C}$ of Theorem~\ref{thm:sickson} is far
from being unique---we can select the $q-1$ new planes in one of the
$q^2$ ``collision classes'' independently at each point of $S$ and even
mix planes from different collision classes for $q>2$, resulting in
at least $(q^2)^{q^3+q^2+q+1}$ different choices for $\mathcal{C}$
(exactly $4^{15}$ different choices for $q=2$).

On the other hand, if we omit the selection of a collision class at
every point of $S$ then no ambiguity is introduced. The resulting
subspace code, we call it $\mathcal{C}_0$, has size
$\#\mathcal{C}_0=\#\mathcal{C}-(q^4-1)=q^8+q^5-q$ and is clearly
$\Sigma$-invariant. Moreover, the size of a
maximal\footnote{``Maximal'' refers to ``maximal size'', not the
  weaker ``maximal with respect to set inclusion''.}  extension
$\overline{\mathcal{C}_0}$ of $\mathcal{C}_0$ is no less than the size
of a maximal extension $\overline{\mathcal{C}}$ of $\mathcal{C}$.

The planes we should consider for augmenting
$\mathcal{C}_0$ are essentially of two types---at most $q^4-1$
planes meeting $S$ in a point and at most
$\gauss{4}{2}{q}=q^4+q^3+2q^2+q+1$ planes meeting $S$ in a
line.\footnote{Adding planes contained in $S$ to $\mathcal{C}_0$ is
  not an option.} Hence the size of 
$\overline{\mathcal{C}_0}$ is a priori bounded by
$q^8+q^5+q^4-q-1\leq\#\overline{\mathcal{C}_0}\leq
q^8+q^5+2q^4+q^3+2q^2$. For large $q$ one may consider
this as a satisfactory answer to the extension problem for
$\mathcal{C}_0$, but for small values of $q$ this is certainly not true.

For more precise results we need to describe the free lines of
$\mathcal{C}_0$ meeting $S$ in a point. Prior to this description,
we collect a few geometric facts about the coset partition of $\F_{q^4}^\times$
relative to the subgroup $O$ of ($q+1$)-th powers,
and we prove two further auxiliary results, which seem to be of
independent interest.

The point set
$\mathcal{O}=\{\F_qa^{q+1};a\in\F_{q^4}^\times\}
=\{\F_qx;x\in\F_{q^4}^\times,x^{(q-1)(q^2+1)}=1\}$
corresponding to $O$ defines an elliptic quadric and hence an ovoid in
$\PG(\F_{q^4}/\F_q)\cong\PG(3,\F_q)$. This can be seen by rewriting
$x^{(q-1)(q^2+1)}=x^{q^3-q^2+q-1}=1$ as $x^{q^3+q}-x^{q^2+1}=0$ and
further as $\uone x^{q^3+q}-\uone x^{q^2+1}=0$, where
$\uone^{q-1}=-1$. The map $x\mapsto\uone x^{q^3+q}-\uone x^{q^2+1}$ takes
values in $\F_q$ and hence constitutes a quadratic form on
$\F_{q^4}/\F_q$. Since $\#\mathcal{O}=q^2+1$, the corresponding
quadric must be elliptic.

Hence the coset partition with respect to $O$ determines a partition
$\mathscr{O}$ of the point set of $\PG(\F_{q^4}/\F_q)$ into $q+1$
ovoids, which are transitively permuted by $\F_{q^4}^\times$ (acting
as a Singer group).\footnote{A partition of $\PG(3,\F_q)$ into $q+1$
  ovoids is often called an \emph{ovoidal fibration}. The ovoidal
  fibration $\mathscr{O}$ has been further
  investigated in \cite{ebert85}.}

It is well-known (see e.g.\ \cite{hirschfeld98},
\cite{beutelspacher-rosenbaum92} or \cite{dembowski68}) that
$\mathcal{O}$ has a unique tangent plane in each of its points and
meets the remaining $q^3+q$ planes of $\PG(\F_{q^4}/\F_q)$ in $q+1$
points (the points of a non-generate conic). The tangent plane to
$\mathcal{O}$ in $\F_q=\F_q1$ is $W'=\uone W$ (the plane with equation
$\trace(\uone x)=0$), where as before $\uone^{q-1}=-1$. This follows
from
$\trace(\uone\cdot 1)=\trace(\uone)=\uone-\uone+\uone-\uone=0$ and 
\begin{align*}
  \trace(\uone a^{q+1})&=\uone a^{q+1}-\uone a^{q^2+q}+\uone
                         a^{q^3+q^2}-\uone a^{1+q^3}\\
  &=\uone(a-a^{q^2})(a^q-a^{q^3})=\uone(a-a^{q^2})^{q+1},
\end{align*}
which shows that $\F_qa^{q+1}\notin \uone W$ unless
$\F_qa^{q+1}=\F_q$.

It follows that each plane $E$ is tangent
to a unique ovoid in $\mathscr{O}$ and meets the remaining $q$
ovoids in $q+1$ points. More precisely, $E=aW$ is tangent to
$a\mathcal{O}$ in $a\uone$, as follows from
$\uone\mathcal{O}=\mathcal{O}$.\footnote{Note that
  $\F_q\uone\in\mathcal{O}$. For even $q$ this is trivial. If $q$ is odd
  and $\alpha$ is a primitive
  element of $\F_{q^4}$ then
  $\uone=\alpha^{(q^3+q^2+q+1)/2}=(\alpha^{(q^2+1)/2})^{q+1}$ satisfies
  $\uone^{q-1}=-1$ and is a ($q+1$)-th power in $\F_{q^4}^\times$.}  

In particular, $W$ itself is tangent to $\mathcal{O}$ in $\F_q\uone$,
and the points of $W$ are partitioned into the singleton
$\{\F_q\uone\}$ and $q$ ovoid sections $W\cap\alpha^i\mathcal{O}$,
$1\leq i\leq q$, of size $q+1$. 


Now recall from Section~\ref{sec:attempt} that
$L\mapsto\dickson(L)$ maps the pencil of all lines through $\F_qa$
bijectively onto the plane $a^{q+1}W$. The planes of this form are
exactly the tangent planes to $\mathcal{O}$ and
represent a dual ovoid $\mathcal{O}^*$ in $\PG(\F_{q^4}/\F_q)$.
Hence we can dualize each of the above properties. In particular
this gives that the $q^2$ planes in $\mathcal{O}^*\setminus\{W\}$
(i.e.\ those with $\sickson(E)=\F_q$, the ``colliding planes'')
intersect $W$ in the $q^2$ lines not passing through the
distinguished point $\F_q\uone$.\footnote{The point $\F_q\uone$ represents
  the dual tangent plane to $\mathcal{O}^*$ in $W$, and the $q^2$
  lines represent the dual lines connecting $W\in\mathcal{O}^*$ to
  the remaining points of $\mathcal{O}^*$.}

Our final and most important geometric observation relates the line
orbits of the Singer group $\F_{q^4}^\times$ to the ovoidal fibration
$\mathscr{O}$. Since $\dickson(rL)=r^{q+1}\dickson(L)$ for
$r\in\F_{q^4}^\times$, every line orbit $[L]$ corresponds to a
unique ovoid in $\mathscr{O}$ (the ovoid containing the point
$\dickson(L)$). The map $[L]\to\dickson(L)\mathcal{O}$ must be a
bijection, since this is true for $L\mapsto\dickson(L)$ at any fixed
point $\F_qa$ and every line orbit (resp., ovoid) contains a line
through $\F_qa$ (resp., has a nonempty ovoid section in $a^{q+1}W$).

In fact the foregoing shows that there are $q$ regular line orbits
$[L]$ (i.e., of length $q^3+q^2+q+1$) and one ``short'' line orbit of
length $q^2+1$ represented by the subfield $\F_{q^2}$ (since
$\dickson(\F_{q^2})=\F_q\uone$). The short orbit contains exactly one
line through each point (i.e., it forms a
line spread); any regular orbit contains $q+1$ lines through each
point $\F_qa$, which form a quadric cone with vertex $\F_qa$; in
particular no three of these $q+1$ lines are coplanar.\footnote{See
  \cite{glynn88a} for more information on this.} 

We have seen in Lemma~\ref{lma:dickson} that $a,b\in W$ implies
$\uone\dickson(a,b)\in W$ (i.e.\ $\dickson(Z)\in W'=\uone W$ for any
line $Z=\langle a,b\rangle\subset W$). The following similar but less
obvious result will be used in the sequel.

\begin{lemma}
  \label{lma:dickson2}
  For $a,b\in W$ we also have $\uone a^{q^3}\dickson(a,b)^{q+1}\in W$.
\end{lemma}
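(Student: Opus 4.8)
The plan is to reduce the claim to the vanishing of a single trace. Since $W=\{z\in\F_{q^4};\trace(z)=0\}$, the assertion $\uone a^{q^3}\dickson(a,b)^{q+1}\in W$ is equivalent to $\trace\bigl(\uone a^{q^3}\dickson(a,b)^{q+1}\bigr)=0$, and this is what I would establish by direct computation, in the same spirit as the proof of Lemma~\ref{lma:dickson}. Write $a_i=a^{q^i}$, $b_i=b^{q^i}$ with all indices read modulo $4$, and put $\delta_i=\dickson(a,b)^{q^i}=a_ib_{i+1}-a_{i+1}b_i$; the hypothesis $a,b\in W$ then reads $a_0+a_1+a_2+a_3=0$ and $b_0+b_1+b_2+b_3=0$. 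Applying $\frob^i$ to $\uone a^{q^3}\dickson(a,b)^{q+1}=\uone a_3\delta_0\delta_1$ and using $\uone^{q^i}=(-1)^i\uone$ (which follows from $\uone^q=-\uone$) gives
\[
  \trace\bigl(\uone a^{q^3}\dickson(a,b)^{q+1}\bigr)
  =\uone\bigl(a_3\delta_0\delta_1-a_0\delta_1\delta_2
  +a_1\delta_2\delta_3-a_2\delta_3\delta_0\bigr),
\]
so it remains to show that the alternating sum $S=a_3\delta_0\delta_1-a_0\delta_1\delta_2+a_1\delta_2\delta_3-a_2\delta_3\delta_0$ vanishes.

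Next I would regroup $S=\delta_1(a_3\delta_0-a_0\delta_2)+\delta_3(a_1\delta_2-a_2\delta_0)$ and show that each bracket collapses to a multiple of $\delta_3$ resp.\ $\delta_1$. Expanding $a_3\delta_0-a_0\delta_2=a_0a_3(b_1+b_2)-a_1a_3b_0-a_0a_2b_3$ and substituting first $b_1+b_2=-(b_0+b_3)$ and then $a_0+a_1=-(a_2+a_3)$ yields $a_3\delta_0-a_0\delta_2=(a_2+a_3)(a_3b_0-a_0b_3)=(a_2+a_3)\delta_3$. An entirely analogous manipulation (now using $b_0+b_3=-(b_1+b_2)$ and $a_0+a_1=-(a_2+a_3)$) gives $a_1\delta_2-a_2\delta_0=(a_2+a_3)(a_2b_1-a_1b_2)=-(a_2+a_3)\delta_1$. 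Substituting both identities into the regrouped form of $S$ produces $S=(a_2+a_3)\delta_1\delta_3-(a_2+a_3)\delta_1\delta_3=0$, which finishes the proof.

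The argument is conceptually straightforward; the only delicate point, and the part I expect to require care, is the bookkeeping in the two auxiliary identities, where the two trace-zero relations must be applied in the right order so that all cross terms pair up and the antisymmetry of $\dickson$ can be invoked to recognize $\delta_1$ and $\delta_3$. It is worth noting that the same regrouping trick, applied with $\dickson(a,b)$ in place of $\dickson(a,b)^{q+1}$ (and without the extra factor $a^{q^3}$), recovers the computation behind Lemma~\ref{lma:dickson}, so the present result may be viewed as a ``quadratic'' refinement of that lemma.
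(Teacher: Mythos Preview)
Your proof is correct. The trace computation and the two auxiliary identities
\[
a_3\delta_0-a_0\delta_2=(a_2+a_3)\delta_3,\qquad a_1\delta_2-a_2\delta_0=-(a_2+a_3)\delta_1
\]
check out line by line, and the regrouping $S=\delta_1(a_3\delta_0-a_0\delta_2)+\delta_3(a_1\delta_2-a_2\delta_0)$ then immediately gives $S=0$.

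Your route differs genuinely from the paper's. The paper exploits the special line $L_0=\F_{q^2}\utwo\subset W$ (the unique line of the short Singer orbit contained in $W$, with $\utwo^{q^2-1}=-1$): since membership in $W$ of $\uone a^{q^3}\dickson(a,b)^{q+1}$ depends only on the line $\langle a,b\rangle$, the paper reduces to two cases, (i)~$\F_qa\notin L_0$, $\F_qb\in L_0$, and (ii)~$\F_qa\in L_0$, and then performs a separate explicit expansion in each case using the relation $\utwo^{q^2}=-\utwo$. Your argument is a single uniform computation that never invokes $L_0$ or $\utwo$ and instead uses only the two trace-zero relations $\sum a_i=\sum b_i=0$; this makes the proof shorter and more self-contained, at the cost of the geometric picture (the short line orbit) that the paper's case split highlights and that resurfaces later in Section~\ref{sec:ext}.
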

This is 
easily seen to be
equivalent to $z^{q^3}\dickson(Z)^{q+1}\in W'$ for all lines
$Z$ in $W$ and all points $\F_qz$ on $Z$.
\begin{proof}

First note that $W$ contains a unique line $L_0=\F_{q^2}\utwo$ of the
short line orbit, which is determined by
$\utwo^{q^2-1}=-1$.\footnote{Thus $L_0$ is the $\F_{q^2}$-analogue of
  the point $\F_q\uone$ and can also be seen as the kernel of the
  relative trace map $\trace_{\F_{q^4}/\F_{q^2}}$.}
Since the map $W\to\F_{q^4}$, $b\mapsto\uone a^{q^3}\dickson(a,b)^{q+1}$ is
constant on lines through $\F_qa$, it suffices to consider the
cases (i) $\F_qa\notin L_0$, $\F_qb\in L_0$ and (ii) $\F_qa\in L_0$,
$b\in W$ arbitrary.

(i) Since all nonzero elements $b\in L_0$ satisfy $b^{q^2-1}=-1$, we
write $b=\utwo$ in this case. Our task is to show that the alternating
sum of the conjugates (over $\F_q$) of
\begin{align*}
  a^{q^3}\dickson(a,\utwo)^{q+1}
  &=a^{q^3}(a\utwo^q-a^q\utwo)(a^q\utwo^{q^2}-a^{q^2}\utwo^q)
    =-a^{q^3}(a\utwo^q-a^q\utwo)(a^q\utwo+a^{q^2}\utwo^q)\\
  &=-a^{q^3+q+1}\utwo^{q+1}+a^{q^3+2q}\utwo^2-a^{q^3+q^2+1}\utwo^{2q}
    +a^{q^3+q^2+q}\utwo^{q+1}
\end{align*}
is equal to zero. Since $a^{q^3+q+1}$ and $a^{q^3+q^2+q}$ are
conjugate and $\utwo^{q+1}=\uone$, the alternating sums of the
conjugates of the first and last summand cancel. For the two summands
in the middle we obtain likewise
\begin{align*}
  &a^{q^3+2q}\utwo^2-a^{2q^2+1}\utwo^{2q}+a^{2q^3+q}\utwo^2-a^{q^2+2}\utwo^{2q}\\
  &\qquad\quad
    -(a^{q^3+q^2+1}\utwo^{2q}-a^{q^3+q+1}\utwo^2+a^{q^2+q+1}\utwo^{2q}
    -a^{q^3+q^2+q}\utwo^2)\\
  &\quad=a^{q^3+q}(a^q+a^{q^3}+a+a^{q^2})\utwo^2-a^{q^2+1}(a^{q^2}+a+a^{q^3}+a^q)
    \utwo^{2q}=0,
\end{align*}
since $a\in W$.

(ii) Writing $a=\utwo$, we have
\begin{align*}
  \utwo^{q^3}\dickson(\utwo,b)^{q+1}&=-\utwo^q(\utwo
                                      b^q-\utwo^qb)(\utwo^q b^{q^2}+\utwo b^q)\\
  &=b^{q+1}\utwo^{2q+1}-b^{2q}\utwo^{q+2}+b^{q^2+1}\utwo^{3q}-b^{q^2+q}\utwo^{2q+1}.
\end{align*}
The alternating sum of the conjugates of the third summand is 
$b^{q^2+1}\utwo^{3q}+b^{q^3+q}\utwo^3-b^{q^2+1}\utwo^{3q}-b^{q^3+q}\utwo^3=0$. For
the alternating sum of the conjugates of the rest we obtain, using
$(\utwo^{2q+1})^q=\utwo^{2q^2+q}=\utwo^{q+2}$,
$(\utwo^{q+2})^q=\utwo^{q^2+2q}=-\utwo^{2q+1}$ and
$b^{q+1}+b^2+b^{q^3+1}=(b^q+b+b^{q^3})b=-b^{q^2+1}$, etc.,
\begin{align*}
  &(b^{q+1}-b^{q^3+q^2}-b^{2q^2}+b^2-b^{q^2+q}+b^{q^3+1})\utwo^{2q+1}\\
  &\qquad\quad
    +(-b^{q^2+q}+b^{q^3+1}-b^{2q}+b^{2q^3}+b^{q^3+q^2}-b^{q+1})\utwo^{q+2}\\
  &\quad=(-b^{q^2+1}+b^{q^2+1})\utwo^{2q+1}+(b^{q^3+q}-b^{q^3+q})\utwo^{q+2}=0.
\end{align*}
This completes the proof of the lemma.
\end{proof}
The second auxiliary result is the projective version of
Lemma~\ref{lma:sickson}.
\begin{lemma}
  \label{lma:sickson2}
  For $a\in\F_{q^4}\setminus\F_q$ we have
  $\sickson(aW)=\F_q\uone a^{-q}(a^q-a)^{q+1}$.\footnote{Note that
    $aW=W$ is equivalent to $a\in\F_q^\times$ (e.g., by Singer's Theorem).}
\end{lemma}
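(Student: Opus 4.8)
The plan is to read the statement off from Lemma~\ref{lma:sickson}. Recall (from the remark just before that lemma) that $r\F_q^\times\mapsto r^{q-1}$ is a group isomorphism of $\F_{q^4}^\times/\F_q^\times$ onto the group of $(q-1)$-th powers in $\F_{q^4}^\times$; hence the asserted projective identity is equivalent to the equality of the corresponding absolute invariants, i.e.\ to
\begin{equation*}
  \bigl(\uone a^{-q}(a^q-a)^{q+1}\bigr)^{q-1}=\sickson(aW)^{q-1}.
\end{equation*}
One should first observe that the right-hand side of the lemma is indeed a well-defined point of $\PG(\F_{q^4}/\F_q)$: since $a\notin\F_q$ we have $a^q\neq a$, so $\uone a^{-q}(a^q-a)^{q+1}\in\F_{q^4}^\times$. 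Also note that $aW\neq W$, so $\sickson(aW)$ is defined.

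Next I would simply compute the left-hand absolute invariant. Using $\uone^{q-1}=-1$ (Lemma~\ref{lma:dickson}) together with the Frobenius identity $(a^q-a)^{q^2}=a^{q^3}-a^{q^2}$, one obtains
\begin{align*}
  \bigl(\uone a^{-q}(a^q-a)^{q+1}\bigr)^{q-1}
  &=-a^{q-q^2}(a^q-a)^{q^2-1}
  =\frac{-a^{q-q^2}(a^{q^3}-a^{q^2})}{a^q-a}\\
  &=\frac{a^q(1-a^{q^3-q^2})}{a^q(1-a^{1-q})}
  =\frac{1-a^{q^3-q^2}}{1-a^{1-q}}.
\end{align*}
On the other hand, the proof of Lemma~\ref{lma:sickson} exhibits exactly this quantity $\frac{1-a^{q^3-q^2}}{1-a^{1-q}}$ as an intermediate value of $\sickson(aW)^{q-1}$. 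Comparing the two displays gives $\bigl(\uone a^{-q}(a^q-a)^{q+1}\bigr)^{q-1}=\sickson(aW)^{q-1}$, and applying the above isomorphism backwards yields $\sickson(aW)=\F_q\uone a^{-q}(a^q-a)^{q+1}$.

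There is no real obstacle here: the lemma is just the projective repackaging of Lemma~\ref{lma:sickson}, and once one passes to $(q-1)$-th powers the verification is a two-line manipulation. (One could instead try to argue straight from the defining formula $\sickson(aW)=\dickson(aW)/\dickson(aW\cap W)^{q+1}$ together with $\dickson(aW)=\F_q a^{q^2+q+1}\uone$, but this would require re-deriving $\dickson(aW\cap W)$, which is precisely the more delicate computation carried out in the proof of Lemma~\ref{lma:sickson}; working with absolute invariants sidesteps it entirely.)
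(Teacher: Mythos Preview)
Your proof is correct and follows essentially the same route as the paper's: both pass to $(q-1)$-th powers and verify the identity by the same short algebraic manipulation, merely run in opposite directions. The only cosmetic difference is that you match against an intermediate line in the \emph{proof} of Lemma~\ref{lma:sickson}; multiplying your final expression $\frac{1-a^{q^3-q^2}}{1-a^{1-q}}$ top and bottom by $a^{q-1}$ recovers the \emph{statement} of that lemma directly, so you need only cite the statement.
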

\begin{proof}
  By Lemma~\ref{lma:sickson},
  \begin{align*}
    \sickson(aW)^{q-1}&=\frac{a^{q-1}-a^{q^3-q^2+q-1}}{a^{q-1}-1}
    =\frac{a^q-a^{q^3-q^2+q}}{a^q-a}
    =-\frac{a^{q^3}-a^{q^2}}{a^{q^2-q}(a^q-a)}\\
    &=-\frac{(a^q-a)^{q^2-1}}{a^{q^2-q}}
      =\left(\uone\cdot\frac{(a^q-a)^{q+1}}{a^q}\right)^{q-1}.
  \end{align*}
  The result follows.
\end{proof}
Now we are ready to resume the analysis of augmenting
$\mathcal{C}_0$. Recall from Section~\ref{sec:attempt} that the
$(q-1)(q^3+q)$ planes in $\mathcal{C}_0$ meeting $S$ in
$P_1=\F_q(0,1)$ have the form
$N=N(Z,P_1,g)=\bigl\{(x,g(x)+\nu);x\in Z,\nu\in\F_q\bigr\}$, where
$Z=\langle a,b\rangle\subset W$ is $2$-dimensional,
$g(x)=\dickson(\lambda d+\mu c,x)/\dickson(a,b)$ and the plane
$E=\langle a,b,\lambda d+\mu c\rangle$ is not of the form $u^{q+1}W$.

In what follows, by a \emph{free line} we mean
a line not covered by a plane in $\mathcal{C}_0$, and by a
\emph{free plane} a plane which contains only free lines and hence
can be individually added to $\mathcal{C}_0$ without increasing $t$.
From Section~\ref{sec:attempt} we know that the $(q-1)q^2$ planes
$N(Z,P_1,g)$ with $E$ of the form $u^{q+1}W$ and their images under
$\Sigma$ are free. We will denote this set of $(q^4-1)q^2$ free planes
by $\mathcal{N}''$, so that
$\mathcal{N}=\mathcal{N}'\uplus\mathcal{N}''$ in the terminology of
Section~\ref{sec:attempt}. 

For the statement of the next lemma recall that the  
$4$-flats in $\PG(W\times\F_{q^4})$ above $S$ are of the form
$F=\F_qx\times\F_{q^4}=\F_q(x,0)+S$ with $\F_qx$ a point in $W$ (i.e.\
$x\in W$ is uniquely determined up to scalar multiples in $\F_q^\times$).

\begin{lemma}
  \label{lma:extension}
  Let $F=\F_qx\times\F_{q^4}$ be a $4$-flat containing $S$ and
  $P_0=\F_q(x,0)=F\cap W$.
  \begin{enumerate}[(i)]
  \item A line $L\subset F$ meeting $S$
    in a point is free if and only if either $P_0\in L$ or the
    plane generated by $P_0$ and $L$ meets $S$ in a line $L'$
    such that $\dickson(L')\in x^q\mathcal{O}$.
  \item A plane $E\subset F$ meeting $S$
    in a line $L'$ is free if and only if $P_0\in E$ and
    $\dickson(L')\in x^q\mathcal{O}$.
  \end{enumerate}
\end{lemma}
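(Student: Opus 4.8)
The plan is to read both assertions off the explicit description of the planes of $\mathcal{C}_0$ obtained in Section~\ref{sec:attempt}, combined with the auxiliary Lemmas~\ref{lma:dickson}, \ref{lma:dickson2} and~\ref{lma:sickson2}. I would begin with two preliminary observations: a plane of $\mathcal{C}_0$ that covers a line meeting $S$ must be a new plane $N(Z,Q',g)$ (the graphs $\graph_h$ being disjoint from $S$); and $N(Z,Q',g)$ contains no line lying in $F$ unless $\F_q x\subseteq Z$, in which case it contains exactly one, namely $\langle(x,g(x)),Q'\rangle$. Granting these, the case $P_0\in L$ of~(i) is immediate: if a line through $P_0$ lay in some $N(Z,Q',g)\in\mathcal{C}_0$, then $\F_q x\subseteq Z$ and $P_0\in N(Z,Q',g)$; writing $Z=\langle a,x\rangle$ and recalling that the value $g(x)$ of the parametrising map is, modulo the point $Q'=\F_q(0,r)$, an $\F_q^\times$-multiple of $r\,\dickson(e,x)/\dickson(a,x)$ (with $e$ representing $\widehat E/Z$ for the good plane $\widehat E$ defining $N$), the condition $P_0\in N(Z,Q',g)$ amounts to $g(x)\in\F_q r$, which forces $\dickson(\langle e,x\rangle)=\dickson(\langle a,x\rangle)$, contradicting injectivity of $L\mapsto\dickson(L)$ on the line pencil through $\F_q x$. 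Hence every line through $P_0$ is free.

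For the case $P_0\notin L$ of~(i), put $Q:=L\cap S$ and $E:=\langle P_0,L\rangle$, a plane meeting $S$ in a line $L'\ni Q$. By the preliminary observations, $L$ is covered iff $L=\langle(x,g(x)),Q\rangle$ for the map $g$ of some new plane $N(Z,Q,g)\in\mathcal{C}_0$ with $\F_q x\subseteq Z$, i.e.\ for some good plane $\widehat E$ with $\F_q x\subseteq\widehat E$. A direct computation gives $\langle P_0,L\rangle\cap S=\{0\}\times\langle g(x),r\rangle$; writing $g(x)=r\beta$ with $\beta=\dickson(e,x)/\dickson(a,x)$ this yields $\dickson(L')=r^{q+1}\cdot\F_q(\beta^q-\beta)$, and the identity $\F_q(\beta^q-\beta)=\sickson(\widehat E)\cdot(\F_q x)^q$ proved inside the proof of Lemma~\ref{lma:collision} turns this into $\dickson(L')=r^{q+1}\,\sickson(\widehat E)\,(\F_q x)^q$. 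The heart of the proof is the set identity
\[
  \bigl\{\,\sickson(\widehat E)\;;\;\widehat E\text{ a good plane with }\F_q x\subseteq\widehat E\,\bigr\}=x^{-q}W\setminus\mathcal{O}.
\]
For ``$\subseteq$'' one checks (a) $\sickson(\widehat E)\notin\mathcal{O}$, since a good plane has $\sickson(\widehat E)\neq\F_q$ and hence $\sickson(\widehat E)\in\image(\sickson)\setminus\{\F_q\}=\PG(\F_{q^4}/\F_q)\setminus\mathcal{O}$ by Theorem~\ref{thm:sickson} and Remark~\ref{rmk:sickson}; and (b) $\sickson(\widehat E)\in x^{-q}W$, which I would obtain by writing $\widehat E=aW$ and $x=aw$ with $0\neq w\in W$ (so $x,w$ are $\F_q$-independent, as $\widehat E\neq W$): Lemma~\ref{lma:sickson2} shows $\sickson(aW)\,x^q$ equals, up to an $\F_q^\times$-factor, $\uone\,\dickson(w,x)^{q+1}\,w^{-q^2-q-1}$, and since the norm $w^{q^3+q^2+q+1}$ lies in $\F_q^\times$ one may replace $w^{-q^2-q-1}$ by $w^{q^3}$, reducing the membership to $\uone\,w^{q^3}\dickson(w,x)^{q+1}\in W$, which is precisely Lemma~\ref{lma:dickson2}. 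Equality of the two sets then follows by counting: $\sickson$ is injective on good planes (Theorem~\ref{thm:sickson}); there are $q^2$ good planes through $\F_q x$, or $q^2+q$ in the exceptional case $\F_q x=\F_q\uone$ ($\F_q\uone$ being the only point of $\PG(W)$ on $\mathcal{O}$); and $\abs{x^{-q}W\setminus\mathcal{O}}$ takes the same value, since $x^{-q}W$ is a tangent plane of $\mathcal{O}$ exactly when $\F_q x\in\mathcal{O}$. Finally $r^{q+1}$ lies in the group $O$ of $(q+1)$-st powers, which stabilises $\mathcal{O}$ and hence $x^q\mathcal{O}$, so the covering criterion reduces to $\dickson(L')\notin x^q\mathcal{O}$; this is~(i).

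Part~(ii) I would deduce from~(i). No plane of $\mathcal{C}_0$ contains a line of $S$, so $L'=E\cap S$ is automatically free, and $E$ is free iff all $q^2+q$ of its lines meeting $S$ in a point are free. If $P_0\in E$, the lines through $P_0$ are free by~(i), while every other such line $M$ satisfies $\langle P_0,M\rangle=E$, hence $\langle P_0,M\rangle\cap S=L'$; so $E$ is free iff $\dickson(L')\in x^q\mathcal{O}$. If $P_0\notin E$, then $M\mapsto\langle P_0,M\rangle\cap S$ is a bijection from the lines of $E$ onto the lines of the plane $\Pi:=\langle P_0,E\rangle\cap S$ which fixes $L'$, so $E$ being free would force (by~(i)) $\dickson(M')\in x^q\mathcal{O}$ for all $q^2+q$ lines $M'\neq L'$ of $\Pi$; by Lemma~\ref{lma:dickson} these points would exhaust $\Pi'\setminus\{\dickson(L')\}$ for some plane $\Pi'$, contradicting $\abs{\Pi'\cap x^q\mathcal{O}}\leq q+1$. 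Hence $E$ is free iff $P_0\in E$ and $\dickson(L')\in x^q\mathcal{O}$.

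The one genuinely delicate step is~(b) above — verifying $\sickson(\widehat E)\in x^{-q}W$ for a good plane through $\F_q x$; the trick is to massage $\sickson(aW)$ by Lemma~\ref{lma:sickson2} and the norm relation until the remaining claim becomes literally Lemma~\ref{lma:dickson2}. All the remaining work is bookkeeping with the ovoidal fibration and the incidence counts for $\mathcal{O}$ already assembled in this section.
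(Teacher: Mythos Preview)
Your argument is correct and follows essentially the same route as the paper's: both proofs pivot on the collision identity $\F_q(\beta^q-\beta)=\sickson(\widehat E)\cdot(\F_qx)^q$ from Lemma~\ref{lma:collision}, use Lemma~\ref{lma:sickson2} together with Lemma~\ref{lma:dickson2} to force $x^q\sickson(\widehat E)\in W$, and finish via the description of $\image(\sickson)$ from Theorem~\ref{thm:sickson}/Remark~\ref{rmk:sickson}. Your version is slightly more streamlined---you handle the cases $\F_qx=\F_q\uone$ and $\F_qx\neq\F_q\uone$ uniformly by a counting argument (rather than the paper's explicit case split), and in~(ii) you invoke Lemma~\ref{lma:dickson} directly instead of the paper's equivalent line-orbit phrasing---but these are organizational refinements, not a different proof.
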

Note that, in view of the preceding discussion, the condition
$\dickson(L')\in x^q\mathcal{O}$ holds precisely for the lines $L'$ in
a certain line orbit of $\F_{q^4}^\times$ on
$\PG(S/\F_q)\cong\PG(\F_{q^4}/\F_q)$. Points $\F_qx$, $\F_qx'$ in
the same ovoid section $W\cap x\mathcal{O}=W\cap x'\mathcal{O}$ are
associated with the same line orbit, and the induced map from ovoid
sections to line orbits is a bijection.\footnote{The ovoid
  $x^q\mathcal{O}=(x\mathcal{O})^q$ differs from $x\mathcal{O}$ only
  by conjugation with the Frobenius automorphism of $\F_{q^4}/\F_q$. If we
  choose orbit representatives with $1\in L'$ then the condition of the lemma
  becomes $\dickson(L')\in W\cap x^q\mathcal{O}$, the conjugate
  ovoid section in $W$.} Moreover, the degenerate ovoid section
$\{\F_q\uone\}$ is associated with the short line orbit $[\F_{q^2}]$ (since
$\dickson(\F_{q^2})=\F_q\uone\in\mathcal{O}=\uone^q\mathcal{O}$).

\begin{proof}[Proof of the lemma]
  Since the sets of free lines and free planes,
  as well as the stated conditions, are $\Sigma$-invariant, it suffices
  to consider the cases $P_1\in L$ and $P_1\in E$.

  (i) The line $L_0=\langle P_0,P_1\rangle=\F_qx\times\F_q$ is free,
  since $N=N(Z,P_1,g)\in\mathcal{C}_0$ has
  $g(x)=\dickson(\lambda d+\mu c,x)/\dickson(a,x)\notin\F_q$. The
  remaining $q^3-1$ lines in $F$ meeting $S$ in $P_1$ have the form
  $L=\F_q(x,y)+P_1$ with $y\in\F_{q^4}\setminus\F_q$ and correspond to
  nontrivial additive cosets of $\F_q$ in $\F_{q^4}$. Inspecting the
  proof of Lemma~\ref{lma:collision} shows that such a line $L$ is
  free iff $\F_q(y^q-y)=\F_q\dickson(1,y)\neq x^q\sickson(E)$ for all
  planes $E$ through $\F_qx$ with $E\notin\mathcal{O}^*$. Since this
  condition depends only on $\F_q^\times y$, the
  free lines form a union of planes through $L_0$ whose intersecting
  lines $L'=\langle 1,y\rangle$ with $S$ are determined by the
  conditions $\dickson(L')\neq x^q\sickson(E)$.\footnote{The points in
    $\langle L_0,L'\rangle$ on the $q-1$ lines through $P_1$ different
    from $L_0,L'$ are those of the form $\F_q(x,y')$ with $y'$ in the
    $\AGL(1,\F_q)$-orbit $\F_q^\times y+\F_q$.}
  
  The planes $E=uW$ containing $\F_qx$ are characterized by $x/u\in
  W$. One such plane is $W$, which will be excluded from now on.
  Using Lemma~\ref{lma:sickson2}, homogeneity of $\dickson$ and Lagrange's
  Theorem for the group $\F_{q^4}^\times/\F_q$, we obtain
  \begin{align*}
    x^q\sickson(uW)&=\F_q\uone(x/u)^q\dickson(1,u)^{q+1}
    =\F_q\uone(u/x)^{q^2+q+1}\dickson(x/u,x)^{q+1}\\
    &=\F_q\uone(x/u)^{q^3}\dickson(x/u,x)^{q+1}.
  \end{align*}
  By Lemma~\ref{lma:dickson2}, $x^q\sickson(uW)\in W$ for all planes
  $uW\neq W$ containing $\F_qx$.  Now we
  distinguish two cases.

  \textsl{Case~1}: $\F_qx=\F_q\uone$. In this case, since no plane in
  $\mathcal{O}^*$ except $W$ passes through $\F_q\uone$, all $q^2+q$
  planes $uW\neq W$ containing $\F_q\uone$ provide a condition
  $\dickson(L')\neq \uone^q\sickson(uW)$. But $\dickson(L')\in W$ and
  the invariants $\sickson(uW)$ are distinct and $\neq 1$. Hence
  $\dickson(L')=\F_q\uone^q=\F_q\uone$ remains as the only
  possibility. This implies $L'=\F_{q^2}$ and
  $\dickson(L')\in\uone\mathcal{O}=\mathcal{O}$ as asserted.

  \textsl{Case~2}: $\F_qx\neq\F_q\uone$. In this case exactly
  $q$ of the planes in $\mathcal{O}^*$ pass through $\F_qx$ and the
  condition $\dickson(L')\neq x^q\sickson(uW)$ applies to $q^2$
  planes. Since $(x/u)^{q^3}\in x^{q^3}\mathcal{O}$ iff
  $u^{q^3}\in\mathcal{O}$ iff $u\in\mathcal{O}$, we must have
  $x^q\sickson(uW)\notin x^{q^3}\mathcal{O}$ for these $q^2$
  planes. Hence the $q^2$ values taken by $x^q\sickson(uW)$ form the
  complementary set $W\setminus x^{q^3}\mathcal{O}$ and
  the condition reduces to $\dickson(L')\in x^{q^3}\mathcal{O}$. Since
  $x^{q^3-q}=(x^{q^2-q})^{q+1}\in\mathcal{O}$, this is in turn
  equivalent to $\dickson(L')\in x^q\mathcal{O}$ as asserted.

  (ii) Clearly any plane satisfying these conditions is free.
  Conversely, if $E$ is free and $P_0\in E$ then Part~(i) can be applied to
  any line $L\subset E$ satisfying $P_0\notin L\neq L'$ and gives
  $\dickson(L')\in x^q\mathcal{O}$. Thus it remains to show that
  in the case $P_0\notin E$ the plane $E$ cannot be free.

  Consider the solid $T=\langle E,P_0\rangle$, which meets $S$ in a
  plane $E'\supset L'$. Connecting $P_0$ to the $q^2+q$ lines $\neq
  L'$ in $E$ and applying Part~(i) gives that all lines $\neq L'$ in
  $E'$ must be in the same line orbit of $\F_{q^4}^\times$ in
  $\PG(S/\F_q)$. Since $E'$ contains no more than $q+1$
  lines of any line orbit,\footnote{More precisely, the lines in $E'$
    fall into $q+1$ orbits---a single line in the short orbit and
    $q+1$ lines forming a dual conic in each of the $q$ regular
    orbits.} we have a contradiction, and the proof of
  the lemma is complete.
\end{proof}
In the sequel we write $\mathcal{E}$ for the set of free planes
meeting $S$ in a line. Part~(ii) of Lemma~\ref{lma:extension} says that
the planes in $\mathcal{E}$ have the form $\F_qx\times L'$
(``decomposable'' planes) with $L'$ in the line orbit associated to $\F_qx$. 

Clearly the largest extension (still having $t=2$) of $\mathcal{C}_0$
by planes in $\mathcal{E}$ is obtained in the following way: (i) Add
all $q^2+1$ planes generated by $\F_q(\uone,0)$ and a line in the
short line orbit of $\F_{q^4}^\times$ on $\PG(S/\F_q)$. These planes
have the form $\F_q\uone\times(\F_{q^2})r$ with
$r\in\F_{q^4}^\times/\F_{q^2}^\times$. (ii) For each ovoid section
$W\cap x\mathcal{O}$ of size $q+1$ decompose the associated regular
line orbit $[L']$ of $\F_{q^4}^\times$ on $\PG(S/\F_q)$ into $q+1$
mutually disjoint partial spreads and a remainder of minimum size
(i.e., the union of the partial spreads, a subset of $[L']$, should
have maximum size). Choose a bijection from $W\cap x\mathcal{O}$ to
the set of these partial spreads and add all planes $\F_qx'\times L$
with $\F_qx'\in W\cap x\mathcal{O}$ and $L$ a line in the partial
spread corresponding to $\F_qx'$.

For small values of $q$ it turns out that the regular Singer line
orbits of $\PG(3,\F_q)$ admit decompositions into fairly large partial
spreads. As a consequence, maximal extensions of $\mathcal{C}_0$ by
planes in $\mathcal{E}$ improve on the code $\mathcal{C}$ of
Theorem~\ref{thm:sickson}. Below we will discuss in more detail the cases
$q=2,3$, where the number of additional planes is $29$ and $114$
respectively.\footnote{Compare this with the number $q^4-1=15$ resp.\
  $80$ of planes in $\mathcal{N}''$ that can be added to
  $\mathcal{C}_0$, and also with the theoretical maximum of
  $\gauss{4}{2}{q}=35$ resp.\ $130$ additional planes in $\mathcal{E}$.}

Of course we are ultimately interested in finding the largest
extension of $\mathcal{C}_0$ by planes of any of the two types. For
$q=2$ it turns out that all but one of the theoretical maximum of
$15+29=44$ additional planes can be added to $\mathcal{C}_0$,
resulting in the largest presently known subspace code
$\overline{\mathcal{C}_0}$ of size $286+43=329$; cf.\
\cite{braun-reichelt14,lt:0328}. This case will be considered further
below, culminating in a computer-free construction of one such
code.

It seems difficult, however, to generalize the analysis in the binary
case to larger values of $q$. In the ternary case $q=3$ the largest
extension of $\mathcal{C}_0$ we have found by a computer search has size
$\#\overline{\mathcal{C}_0}=6977$,\footnote{Compare this with the upper
  bound $\#\overline{\mathcal{C}_0}\leq 6801+80+114=6995$.}
but we do not yet know whether this is the true maximum.

We summarize our present knowledge about the extension problem for
$\mathcal{C}_0$ in the following theorem. Part~(i) and~(ii) are the
result of a computer search. For the computation of canonical forms
and automorphism groups of subspace codes, the algorithm
in~\cite{feulner13} (based on~\cite{feulner09}, see
also~\cite{feulner-diss}) is used.  Part~(iii) represents a slight
improvement of Theorem~\ref{thm:sickson} for general $q$.

\begin{theorem}
  \label{thm:final}
  Let $\mathcal{C}_0$ be the plane subspace code of size $q^8+q^5-q$
  obtained by the expurgation-augmentation process described in
  Section~\ref{sec:attempt} and with no planes in $\mathcal{N}''$
  selected.
  \begin{enumerate}[(i)]
  \item For $q=2$ maximal extensions
    $\overline{\mathcal{C}_0}$ of $\mathcal{C}_0$ have size
    $\#\overline{\mathcal{C}_0}=329$. There exist $26\,496$ different
    isomorphism types of such extensions, all with trivial
    automorphism group. Moreover, both possible intersection patterns
    with $S$, viz.\ $(a_0,a_1,a_2,a_3)=(136,164,29,0)$ and
    $(136,165,28,0)$, occur
    with numbers of isomorphism types $10\,368$ and $16\,128$, respectively.
  \item For $q=3$ there exists an extension
    $\overline{\mathcal{C}_0}$ of
    size $\#\overline{\mathcal{C}_0}=6977$.
  \item For general $q$ there exists an extension
    $\overline{\mathcal{C}_0}$ of size
    $\#\overline{\mathcal{C}_0}=q^8+q^5+q^4+q^2-q$, obtained by adding
    to the subspace code $\mathcal{C}$ of Theorem~\ref{thm:sickson}
    the $q^2+1$ planes in $\mathcal{E}$ 
    of the form $\F_q\uone\times(\F_{q^2})r$, $r\in\F_{q^4}^\times/\F_{q^2}^\times$.
  \end{enumerate}
\end{theorem}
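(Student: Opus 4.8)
The plan is to obtain part~(iii) by a direct application of Lemma~\ref{lma:extension}, and to obtain parts~(i) and~(ii) from a computer search, which I would describe rather than carry out by hand. For part~(iii): by Lemma~\ref{lma:extension}(ii), applied with $x=\uone$ (so that the relevant $4$-flat above $S$ is $F=\F_q\uone\times S$ and the point $P_0=\F_q(\uone,0)$ lies on the planes in question), a plane $\F_q\uone\times L'$ is free for $\mathcal{C}_0$ as soon as $\dickson(L')\in\uone^q\mathcal{O}$. Since $\uone^q\mathcal{O}=\uone\mathcal{O}=\mathcal{O}$ and the degenerate ovoid section $W\cap\mathcal{O}=\{\F_q\uone\}$ is associated with the short line orbit $[\F_{q^2}]$ (as recorded just after Lemma~\ref{lma:extension}), this holds for all $q^2+1$ lines $L'=(\F_{q^2})r$, $r\in\F_{q^4}^\times/\F_{q^2}^\times$, of that orbit. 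Hence each of the $q^2+1$ planes $\F_q\uone\times(\F_{q^2})r$ shares no line with any plane of $\mathcal{C}_0$, and any two of them intersect exactly in the single point $\F_q(\uone,0)$ because distinct cosets $(\F_{q^2})r$ meet only in $0$; so they are pairwise compatible and compatible with $\mathcal{C}_0$.

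It then remains to check compatibility with the $q^4-1$ extra planes of $\mathcal{N}''$ that distinguish $\mathcal{C}$ from $\mathcal{C}_0$. Such a plane $N'=N(Z,P,g)$ meets $S$ in a single point $P$ and is governed by a plane $E\in\mathcal{O}^*\setminus\{W\}$ with trace $Z=E\cap W$, and this $Z$ is unaffected by the $\Sigma$-translation moving $N'$ between points of $S$. Every point of a putative common line $L\subseteq N'\cap\bigl(\F_q\uone\times(\F_{q^2})r\bigr)$ has first coordinate in $Z\cap\F_q\uone$, and since $N'$ contains no line of $S$ this forces $\F_q\uone\subseteq Z$ — impossible, because the $q^2$ traces $E\cap W$ of the planes $E\in\mathcal{O}^*\setminus\{W\}$ are precisely the $q^2$ lines of $W$ missing $\F_q\uone$ (the statement dual to the tangency of $W$ to $\mathcal{O}$ in $\F_q\uone$). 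Therefore $\overline{\mathcal{C}_0}:=\mathcal{C}\cup\{\F_q\uone\times(\F_{q^2})r:r\in\F_{q^4}^\times/\F_{q^2}^\times\}$ still has $t=2$, with $\#\overline{\mathcal{C}_0}=\#\mathcal{C}+(q^2+1)=q^8+q^5+q^4+q^2-q$; and since the $q^2+1$ added planes form one $\Sigma$-orbit while $\mathcal{C}$ may be chosen $\Sigma$-invariant, so may $\overline{\mathcal{C}_0}$. I expect this last compatibility check against $\mathcal{N}''$ to be the only point of the argument requiring care; everything else is immediate from Lemma~\ref{lma:extension}.

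For parts~(i) and~(ii) I would record the underlying search. For $q=2$, $\mathcal{C}_0$ has size $286$, and the free planes available for augmentation are the $\mathcal{N}''$-planes (at most one collision subclass, hence one plane, per point of $S$) together with the planes of $\mathcal{E}$, namely the $5$ short-orbit planes from part~(iii) and the planes arising from decompositions of the two regular Singer line orbits of $\PG(3,\F_2)$ into partial spreads. An exhaustive search over all mutually compatible selections of such planes, with isomorph rejection and automorphism-group computation carried out by the canonical-form algorithm of~\cite{feulner13}, yields the maximum size $329$, the $26\,496$ isomorphism types (all with trivial automorphism group), and the distribution $(10\,368,16\,128)$ of isomorphism types over the two possible solid-intersection patterns $(136,164,29,0)$ and $(136,165,28,0)$. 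For $q=3$ an analogous but necessarily incomplete search produces the asserted code of size $6977$.
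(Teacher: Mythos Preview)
Your proof of Part~(iii) is correct and follows essentially the same route as the paper: the key step is the observation that the $q^2$ planes $E\in\mathcal{O}^*\setminus\{W\}$ meet $W$ in precisely the $q^2$ lines of $W$ missing $\F_q\uone$, so the planes of $\mathcal{N}''$ have $Z\not\ni\F_q\uone$ and cannot share a line with any $\F_q\uone\times(\F_{q^2})r$. Your treatment is somewhat more explicit (invoking Lemma~\ref{lma:extension} directly and spelling out the pairwise compatibility of the $q^2+1$ added planes and the $\Sigma$-invariance), whereas the paper relies on the preceding discussion of $\mathcal{E}$ for those points, but the substance is identical; your description of Parts~(i) and~(ii) as computer results obtained via the canonical-form machinery of~\cite{feulner13} likewise matches the paper.
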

\begin{proof}[Proof of Part~(iii)]
  Since $W$ is the tangent plane to $\mathcal{O}$ in $\F_q\uone$, $W$
  meets the remaining $q^2$ tangent planes in $\mathcal{O}^*$ in the
  $q^2$ lines not passing through $\F_q\uone$. This means that the
  planes in $\mathcal{N}''$ have the form $N(Z,P,g)$ with $Z$ not
  passing through $\F_q\uone$ and hence do not interfere with the
  $q^2+1$ new planes, which have the form
  $E=E\bigl(\F_q\uone,(\F_{q^2})r,0\bigr)$.\footnote{Viewed
    geometrically, the planes in $\mathcal{N}''$
    contain no points in $(\F_q\uone\times\F_{16})\setminus S$ and hence
    cannot have a line with a plane $\F_q\uone\times(\F_{q^2})r$ in
    common.}
\end{proof}

In the remainder of this section we will present a computer-free
construction of a maximal extension $\overline{\mathcal{C}_0}$ in the
case $q=2$ and briefly comment on the case $q=3$, which is remarkable
in several respects.\footnote{Verehrter Jubilar, Sie haben sicher
  schon bemerkt, dass die Ordnung der multiplikativen Gruppe
  $\F_{3^4}^\times$ gerade $80$ ist.}

Representing $\F_{16}$ as $\F_2[\alpha]$ with $\alpha^4+\alpha+1=0$,
we have $\F_{16}^\times=\langle\alpha\rangle$ and
$W=\{1,\alpha,\alpha^2,\alpha^4,\alpha^5,\alpha^8,\alpha^{10}\}$.\footnote{We
  can represent the points of $\PG(3,\F_2)$ by the nonzero
  elements of $\F_{16}^\times$.} The subfield $\F_4\subset\F_{16}$
represents a line of $\PG(3,\F_2)$ and generates the short line orbit
$[\F_4]=\{\F_4\alpha^{3i};0\leq i\leq 4\}$. In addition there are two
regular line orbits represented by $L_1=\{1,\alpha,\alpha^4\}$ and
$L_2=\{1,\alpha^2,\alpha^8\}=\frob(L_1)$. The remaining lines through
$1$ are $\{1,\alpha^3,\alpha^{14}\}$, $\{1,\alpha^{11},\alpha^{12}\}$
in $[L_1]$ and $\{1,\alpha^6,\alpha^{13}\}$, $\{1,\alpha^7,\alpha^9\}$
in $[L_2]$. The ovoidal fibration is
$\mathscr{O}=\{\mathcal{O},\alpha\mathcal{O},\alpha^2\mathcal{O}\}$
with $\mathcal{O}=\{\alpha^{3i};0\leq i\leq 4\}$, and the
corresponding $W$-sections are $\mathcal{O}\cap W=\{1\}$,
$\alpha\mathcal{O}\cap W=\{\alpha,\alpha^4,\alpha^{10}\}$,
$\alpha^2\mathcal{O}\cap W=\{\alpha^2,\alpha^5,\alpha^8\}$.

Decomposing $[L_1]$, $[L_2]$ into partial spreads is best done in a
graph-theoretic setting. We view the lines in each orbit as vertices
of a circulant graph via $\alpha^iL\mapsto i\in\Z_{15}$. Then
$\alpha^iL\cap\alpha^jL\neq\emptyset$ iff
$j-i\in\{\pm 1,\pm 3,\pm 4\}$ for $L\in[L_1]$, and similarly for
$[L_2]$.\footnote{In general the circulant graph associated with a
  regular line orbit has as its connection set the pairwise
  differences of the logs in $\F_{q^4}^\times/\F_q^\times$ of the
  points on a representative line.} In this way partial spreads in the
line orbits correspond to cocliques (independent sets) of the
associated circulant graph, and an optimal decomposition into $q+1=3$
partial spreads corresponds to a $3$-colorable (vertex) subgraph of
maximum size.  In the case under consideration the two graphs are
isomorphic (since the orbits are interchanged by $\frob$) and have
chromatic number $4$.  It is readily seen that the maximum cocliques
in $\Gamma_1$ (the graph corresponding to $[L_1]$) are $S=\{0,2,7,9\}$
and its cyclic shifts modulo $15$, and that $\{S,S+1,S+4\}$ forms an
optimal decomposition of $[L_1]$ into $3$ partial spreads of size $4$
(and some remainder of size $3$).

At this point we see that $\mathcal{C}_0$ can be extended by
$29=5+4+4+4+4+4+4$ planes meeting $S$ in a line. In what follows, we
choose the corresponding partial line spreads as the short line orbit
(a total spread) for $F=\F_2\uone\times\F_{16}=\F_2\times\F_{16}$ and the six
partial spreads corresponding to $S$, $S+1$, $S+4$ and their images
under $\frob$.\footnote{This choice
is closely related to the essentially unique packing of the $35$ lines
of $\PG(3,\F_2)$ into $7$ spreads, which represents
a solution to Kirkman's Schoolgirl Problem. The packing is obtained by
applying a certain cyclic shift modulo $15$ to the second orbit decomposition
and then adding the $3$ lines omitted from each orbit decomposition to
the partial spreads in the other set, one at a time.}

We have yet at our disposal the actual ``wiring'' between the six
points $x\in W\setminus\{1\}$ and the six partial line
spreads. Since
$\dickson(L_1)=1\cdot\alpha\cdot\alpha^4=\alpha^5\in\alpha^2\mathcal{O}$,
Lemma~\ref{lma:extension} only stipulates that the points in
$W\cap\alpha\mathcal{O}=\{\alpha^1,\alpha^4,\alpha^{10}\}$ are
connected to the three line spreads in $[L_1]$ and the points in
$\{\alpha^2,\alpha^5,\alpha^{8}\}$ to the three line spreads in
$[L_2]$. The actual choice of the bijections 
(out of six feasible choices for each of
the two ovoid sections) should maximize the number of planes in
$\mathcal{N}''$ that can be added to further extend the
resulting subspace code of size $286+29=315$.

In order to solve this problem, we must take a closer look at the
lines covered by the planes in $\mathcal{N}''$
and how these relate to the lines covered by
the extended code of size $315$. The ``local'' situation at
$P_1=\F_2(0,1)$ is depicted in the following table:

\begin{equation*}
  \begin{array}{|c||c|c|c|c|c|c|c|}
    \hline x\backslash L'&5,10&1,4&2,8&3,14&6,13&11,12&7,9\\
    \hline\hline
    \hline0&&\times&\times&\times&\times&\times&\times\\
    \hline
    5&\times&\times&\text{c}&\times&&\times&\\
    10&\times&\text{c}&\times&&\times&&\times\\
    \hline
    1&\times&&\times&\text{c}&\times&&\times\\
    2&\times&\times&&\times&\text{c}&\times&\\
    4&\times&&\times&&\times&\text{c}&\times\\
    8&\times&\times&&\times&&\times&\text{c}\\
    \hline
  \end{array}
\end{equation*}
The rows of the table are indexed with the logs of the elements
$x\in W$ (corresponding to the $4$-flats $F$ above $W$), the columns
with pairs $(i,j)$ such that $\alpha^i+\alpha^j=1$ (corresponding to
the lines $L'$ in $\PG(\F_{16}/\F_2)$ through $1$), and the table
entries '$\times$', 'c' indicate that the plane $\F_2x\times L'$
conflicts with a plane in $\mathcal{C}_0$ (i.e.\
$\F_2x\times L'\notin\mathcal{E}$), respectively, with the two planes
$N=N(Z,P_1,g)\in\mathcal{N}''$ that have $x\in Z$. For this recall
that in general the $q$ planes in $\mathcal{N}''$ of the form
$N(Z,P_1,g)$ with $x\in Z$ cover the same set of $q-1$ lines meeting
$S$ in a point, and that these lines are in the plane $\F_qx\times L'$
with $L'$ determined by $\dickson(L')=\F_qx^q$.\footnote{The planes
  $E\in\mathcal{O}^*\setminus\{W\}$ parametrizing the planes in
  $\mathcal{N}''$ have $\sickson(E)=\F_q$, whence
  $\dickson(L')=\sickson(E)x^q=\F_qx^q$; cf.\ the proof of
  Lemma~\ref{lma:collision}.}

Now suppose we connect $x\in\{\alpha^1,\alpha^4,\alpha^{10}\}$ to one of
the three partial line spreads in $[L_1]$, say $\mathcal{S}$. Then,
writing $P_r=\F_2(0,r)$ and 
using the action of $\Sigma$ on $\mathcal{N}''$, we see that the planes
$N=N(Z,P_r,g)\in\mathcal{N}''$ with $x\in Z$ conflict with
$\F_2x\times(rL')$, where $L'$ is the line through $1$ matched to
$x$ by the 'c' entries in the table. Thus there
are precisely $4$ values of $r$ for which the later choice of a plane
$N=N(Z,P_r,g)\in\mathcal{N}''$ with $x\in Z$ is forbidden, viz.\
those $r$ for which $rL'\in\mathcal{S}$.\footnote{Since $L'\in[L_1]$
  and $[L_1]$ is regular, the correspondence $r\mapsto rL'$ is a
  bijection.}
Applying the same reasoning to all $x\in W\setminus\{1\}$ and all
valid choices for $\mathcal{S}$, we obtain the following $3\times 3$
arrays of forbidden values for $r$. As before, elements of
$\F_{16}^\times$ are represented by their logs with respect to
$\alpha$, and in place of the partial spreads we have listed the
corresponding cocliques of the circulant graph.\footnote{Thus, for example,
  $0,2,7,9$ refers to the partial spread
  $\mathcal{S}=\{\alpha^0L_1,\alpha^2L_1,\alpha^{7}L_1,\alpha^9L_1\}$
  with lines
  $\alpha^0L_1=\{1,\alpha,\alpha^4\}$,
  $\alpha^2L_1=\{\alpha^2,\alpha^3,\alpha^6\}$, 
  $\alpha^7L_1=\{\alpha^7,\alpha^8,\alpha^{11}\}$,
  $\alpha^9L_1=\{\alpha^9,\alpha^{10},\alpha^{13}\}$, and $0,4,14,3$
  to the partial spread $\mathcal{S}'=
  \{\alpha^0L_2,\alpha^4L_2,\alpha^{14}L_2,\alpha^3L_2\}=\frob(\mathcal{S})$.}
Further, the ordering of $W\setminus\{1\}$ is chosen in such a way
that the arrays are symmetric with respect to the main
diagonal.\footnote{This can be done, since the offsets of the cocliques
  are the same as that of the lines through $1$.}
  \begin{equation*}
    \begin{array}{|c||l|l|l|}
      \hline
      x\backslash\mathcal{S}&0,2,7,9&1,3,8,10&4,6,11,13\\\hline\hline
      10&0,2,7,9&1,3,8,10&4,6,11,13\\\hline
      1&1,3,8,10&2,4,9,11&5,7,12,14\\\hline
      4&4,6,11,13&5,7,12,14&8,10,0,2\\\hline
    \end{array}\qquad
    \begin{array}{|c||l|l|l|}
      \hline
      x\backslash\mathcal{S}&0,4,14,3&2,6,1,5&8,12,7,11\\\hline\hline
      5&0,4,14,3&2,6,1,5&8,12,7,11\\\hline
      2&2,6,1,5&4,8,3,7&10,14,9,13\\\hline
      8&8,12,7,11&10,14,9,13&1,5,0,4\\\hline
    \end{array}
  \end{equation*}
The task is now to match, for each of the two tables,
the row labels to the column labels in such a way that the number of
points $P_r$ that admit a non-conflicting
choice $N=N(Z,P_r,g)$, i.e.\ a choice of $Z$ such that $r$ is is
forbidden for no $x\in Z$, is maximized.

A moments reflection shows that the best we can do is to use the main
diagonals of the tables (or one of the other two row-and-column
transversals without repeated $4$-tuples) for the matching, i.e.\
$10\mapsto\{0,2,7,9\}$, $1\mapsto\{1,3,8,10\}$,
$4\mapsto\{4,6,11,13\}$, and similarly for the
second table. This ensures that for each $P_r$ at most two points
$x_1,x_2\in W\setminus\{1\}$ are forbidden and leads to a valid choice
for $Z$ unless the line through $x_1$, $x_2$ contains
$1$.\footnote{This is the only way to block all four lines
  $Z\subset W$ (the passants to $1$) by a $2$-set.} Since the only
such line is $\{1,\alpha^5,\alpha^{10}\}$ and the three $4$-tuples in
the first row of the first table are transversal to the corresponding
$4$-tuples of the second table, we can make a non-conflicting choice
of $Z$ for all but one $P_r$. When using the two main diagonals the
``bad'' point is $P_{11}$.

In all, we can extend $\mathcal{C}_0$ by $29+14=43$ planes to a
subspace code $\overline{\mathcal{C}_0}$ of size 329 as claimed.

Finally, we consider briefly the case $q=3$. Here the number of points
and lines in $S$ are $40$ and $130$, respectively, with line orbit
sizes $10$, $40$, $40$, $40$. Representing $\F_{81}$ as
$\F_3[\alpha]$ with $\alpha^4-\alpha^3-1=0$ (a generator of
$\F_{81}^\times$) and the points of
$\PG(\F_{81}/\F_3)$ as $\alpha^i$, $0\leq i<40$, we obtain
\[W=\{\alpha^5,\alpha^{13},\alpha^{15}, \alpha^{20}, \alpha^{22},
\alpha^{25}, \alpha^{26}, \alpha^{31}, \alpha^{34}, \alpha^{35},
\alpha^{37}, \alpha^{38}, \alpha^{39}\}\] with ovoid sections
$W\cap\mathcal{O}=\{\alpha^{20}\}$,
$W\cap\alpha\mathcal{O}=\{\alpha^5,\alpha^{13},\alpha^{25},\alpha^{37}\}$, 
  $W\cap\alpha^2\mathcal{O}=\{\alpha^{22},\alpha^{26},\alpha^{34},\alpha^{38}\}$,
$W\cap\alpha^3\mathcal{O}=\{\alpha^{15},\alpha^{31},\alpha^{35},\alpha^{39}\}$
  and corresponding line orbit representatives
  \begin{align*}
    L_0&=\F_9=\{\alpha^0,\alpha^{10},\alpha^{20},\alpha^{30}\},\\
    L_1&=\{\alpha^0,\alpha^{2},\alpha^{18},\alpha^{25}\},\\
    L_2&=\{\alpha^0,\alpha^1,\alpha^{28},\alpha^{37}\},\\
    L_3&=\{\alpha^0,\alpha^5,\alpha^{11},\alpha^{19}\}.
  \end{align*}
The orbit $[L_2]$ is $\frob$-invariant and admits a decomposition into
$4$ spreads, corresponding to the cocliques
$S=\{0,2,8,10,16,18,24,26,32,34\}$, $S+1$, $S+4$, $S+5$.\footnote{This
  follows from the fact that the differences $0,\pm 2\pmod{8}$ do not
  occur within the connection set $\{\pm 1,\pm 28,\pm 37,\pm 27,\pm
  36,\pm 9\}$ of the circulant graph.} 
The other two
regular line orbits $L_1$, $L_3$ are interchanged by $\frob$ and admit
an (optimal) decomposition into $5$ partial spreads of size $8$. For
$[L_1]$ the corresponding cocliques are $T=\{1,2,11,12,21,22,31,32\}$,
$T+2$, $T+4$, $T+6$, $T+8$.\footnote{Similarly due to the fact that $0,\pm
  1\pmod{10}$ do not occur within the connection set $\{\pm 2,\pm
  18,\pm 25,\pm 16,\pm 23,\pm 7\}$}
From this it follows that $\mathcal{C}_0$, of size
$\#\mathcal{C}_0=6801$, can be extended by $10+4\times 10+4\times
8+4\times 8=114$ planes in $\mathcal{E}$ to a subspace code of size
$6915$.

Proceeding further as in the case $q=2$, we find that the $4\times 4$
arrays corresponding to $[L_1]$ and $[L_3]$ do not contain
row-and-column transversals with all four $8$-tuples distinct. Thus
the argument used in the case $q=2$ to extend the intermediate
subspace code further by planes in $\mathcal{N}''$ breaks down and the
situation becomes considerably more involved. We have conducted a
non-exhaustive computer search for maximal extensions of
$\mathcal{C}_0$ (a more general approach than only trying to further
extend one particular extension of size 6915). As already mentioned,
the largest extension found in this way has size
$\#\overline{\mathcal{C}_0}=6977$.

\section{Conclusion}\label{sec:conclsuion}

We have developed the expurgation-augmentation approach to the
construction of good subspace codes, originally presented in
\cite{smt:fq11proc} and later extended in \cite{lt:0328}, in greater
depth, providing an explicit formula (in terms of the
$\sigma$-invariant) for the number of new planes meeting the special
solid $S$ in a point that can be added to the expurgated lifted
Gabidulin code without introducing a multiple cover of some line, and
a much refined analysis of the final extension step by planes meeting
$S$ in a line.

The existence problem for $q$-analogues of the Fano plane, which
provided a great deal of motivation for the present work, remains
grossly open, but this will not discourage us, nor should it discourage
anybody else in the audience, from further attempts to resolve it---at
least in the case $q=2$, for which by Moore's Law a computer attack 
will become feasible in the not too distant future.

Should a $q$-analogue indeed exist, it may be possible to construct it
using a variant of our approach, starting with either a non-Gabidulin
MRD code or a smaller set of $3\times 4$ matrices at pairwise rank
distance $\geq 2$ that cannot be embedded into an MRD
code.\footnote{When finishing up our work on plane subspace
  codes in $\PG(5,\F_q)$, we have discovered that one of the five
  isomorphism types of optimal binary subspace codes of size $77$ can
  be constructed from a set of $48$ binary $3\times 3$ matrices that is not
  extendable to an MRD code.}

The present work may also be continued by investigating, for general
$q$, the sizes of
optimal decompositions of Singer line orbits of $\PG(3,\F_q)$ into
$q+1$ partial spreads and how these should be wired to the points of
the corresponding ovoid sections in $W$ in order to maximize further
extendability by planes in $\mathcal{N}''$; cf.\ the end of
Section~\ref{sec:ext}. This should narrow down the gap between
the lower and upper bound for the size of a maximal extension
$\overline{\mathcal{C}_0}$ given at the beginning of
Section~\ref{sec:ext}; cf.\ also Theorem~\ref{thm:final}\,(iii).

Finally we believe that large portions of the machinery developed can
be generalized to subspace codes of packet lengths $v>7$. While for
larger $v$ there is no analogue of the trace-zero subspace $W$ and
hence no canonical choice for the ambient space and its corresponding
$\sigma$-invariant, it should still be possible to derive by our
method some explicit results on the number of planes in $\mathcal{N}'$ that can
be added to the expurgated subspace code, and to carry over the extension
analysis in Section~\ref{sec:ext} to some extent.

\nocite{kiermaier-laue15}


\begin{thebibliography}{10}

\bibitem{beutelspacher74}
A.~Beutelspacher.
\newblock On parallelisms in finite projective spaces.
\newblock {\em Geometriae Dedicata}, 3(1):35--40, 1974.

\bibitem{beutelspacher75}
A.~Beutelspacher.
\newblock Partial spreads in finite projective spaces and partial designs.
\newblock {\em Mathematische Zeitschrift}, 145:211--230, 1975.
\newblock Corrigendum, ibid. 147:303, 1976.

\bibitem{beutelspacher-rosenbaum92}
A.~Beutelspacher and U.~Rosenbaum.
\newblock {\em Projektive Geometrie}.
\newblock Number~41 in Vieweg Studium. Vieweg, 1992.

\bibitem{braun-etal13}
M.~Braun, T.~Etzion, P.~R.~J. {\"O}sterg{\aa}rd, A.~Vardy, and A.~Wassermann.
\newblock Existence of $q$-analogs of {S}teiner systems.
\newblock Preprint arXiv:1304.1462 [math.CO], Apr. 2013.

\bibitem{braun-reichelt14}
M.~Braun and J.~Reichelt.
\newblock $q$-analogs of packing designs.
\newblock {\em Journal of Combinatorial Designs}, 22(7):306--321, July 2014.
\newblock Preprint arXiv:1212.4614 [math.CO].

\bibitem{cameron74}
P.~J. Cameron.
\newblock Generalisation of {F}isher's inequality to fields with more than one
  element.
\newblock In {\em Combinatorics ({P}roc. {B}ritish {C}ombinatorial {C}onf.,
  {U}niv. {C}oll. {W}ales, {A}berystwyth, 1973)}, pages 9--13. London Math.
  Soc. Lecture Note Ser., No. 13. Cambridge Univ. Press, London, 1974.

\bibitem{cameron95c}
P.~J. Cameron.
\newblock Note on large sets of infinite {S}teiner systems.
\newblock {\em Journal of Combinatorial Designs}, 3(4):307--311, 1995.

\bibitem{delsarte78a}
P.~Delsarte.
\newblock Bilinear forms over a finite field, with applications to coding
  theory.
\newblock {\em Journal of Combinatorial Theory, Series~A}, 25:226--241, 1978.

\bibitem{dembowski68}
P.~Dembowski.
\newblock {\em Finite Geometries}.
\newblock Springer-Verlag, 1968.
\newblock Classics in Mathematics Series, 1997.

\bibitem{denniston11}
R.~H.~F. Denniston.
\newblock Packings of {$\PG(3,q)$}.
\newblock In A.~Barlotti, editor, {\em Finite Geometric Structures and their
  Applications}, number~60 in CIME Summer Schools, pages 195--199.
  Springer-Verlag, 2011.
\newblock Reprint of the 1st ed. C.I.M.E., Ed. Cremonese, Roma, 1973.

\bibitem{ebert85}
G.~L. Ebert.
\newblock Paritioning projective geometries into caps.
\newblock {\em Canadian Journal of Mathematics}, 37(6):1163--1175, 1985.

\bibitem{el-zanati-jordon-seelinger-sissokho-spence10}
S.~El-Zanati, H.~Jordon, G.~Seelinger, P.~Sissokho, and L.~Spence.
\newblock The maximum size of a partial $3$-spread in a finite vector space
  over {$\GF(2)$}.
\newblock {\em Designs, Codes and Cryptography}, 54(2):101--107, 2010.

\bibitem{etzion-silberstein09}
T.~Etzion and N.~Silberstein.
\newblock Error-correcting codes in projective spaces via rank-metric codes and
  {F}errers diagrams.
\newblock {\em IEEE Transactions on Information Theory}, 55(7):2909--2919, July
  2009.

\bibitem{fazeli-lovett-vardy13}
A.~Fazeli, S.~Lovett, and A.~Vardy.
\newblock Nontrivial $t$-designs over finite fields exist for all $t$.
\newblock Preprint arXiv:1306.2088 [math.CO], June 2013.

\bibitem{feulner09}
T.~Feulner.
\newblock The automorphism groups of linear codes and canonical representatives
  of their semilinear isometry classes.
\newblock {\em Advances in Mathematics of Communications}, 3(4):363--383, Nov.
  2009.

\bibitem{feulner13}
T.~Feulner.
\newblock Canonical forms and automorphisms in the projective space.
\newblock Preprint arXiv:1305.1193 [cs.IT], May 2013.

\bibitem{feulner-diss}
T.~Feulner.
\newblock {\em {Eine kanonische Form zur Darstellung {\"a}quivalenter Codes --
  Computergest{\"u}tzte Berechnung und ihre Anwendung in der Codierungstheorie,
  Kryptographie und Geometrie}}.
\newblock Phd thesis, Universit{\"a}t Bayreuth, 2014.

\bibitem{gabidulin85}
E.~M. Gabidulin.
\newblock Theory of codes with maximum rank distance.
\newblock {\em Problems of Information Transmission}, 21(1):1--12, 1985.

\bibitem{glynn88a}
D.~G. Glynn.
\newblock On a set of lines of {$\PG(3,q)$} corresponding to a maximal cap
  contained in the {K}lein quadric of {$\PG(5,q)$}.
\newblock {\em Geometriae Dedicata}, 26(3):273--280, 1988.

\bibitem{hirschfeld98}
J.~W.~P. Hirschfeld.
\newblock {\em Projective Geometries over Finite Fields}.
\newblock Oxford University Press, 2nd edition, 1998.

\bibitem{smt:fq11proc}
T.~Honold, M.~Kiermaier, and S.~Kurz.
\newblock Optimal binary subspace codes of length $6$, constant dimension $3$
  and minimum subspace distance $4$.
\newblock To appear in the proceedings volume of the 11th International
  Conference on Finite Fields and their Applications (Magdeburg, July 22--26,
  2013), AMS Contemporary Mathematics Series, Vol. 632, 2015. Preprint
  arXiv:1311.0464 [math.CO], Nov. 2013.

\bibitem{keevash14}
P.~Keevash.
\newblock The existence of designs.
\newblock Preprint arXiv:1401.3665 [math.CO], Jan. 2014.

\bibitem{kiermaier-laue15}
M.~Kiermaier and R.~Laue.
\newblock Derived and residual subspace designs.
\newblock {\em Advances in Mathematics of Communications}, 9(1):105--115, Feb.
  2015.

\bibitem{kiermaier-pavcevic14}
M.~Kiermaier and M.~O. Pav{\v c}evi{\'c}.
\newblock Intersection numbers for subspace designs.
\newblock Journal of Combinatorial Designs, published online, July 2014.

\bibitem{koetter-kschischang08}
R.~Koetter and F.~Kschischang.
\newblock Coding for errors and erasures in random network coding.
\newblock {\em IEEE Transactions on Information Theory}, 54(8):3579--3591, Aug.
  2008.

\bibitem{kohnert-kurz08}
A.~Kohnert and S.~Kurz.
\newblock Construction of large constant dimension codes with a prescribed
  minimum distance.
\newblock In J.~Calmet, W.~Geiselmann, and J.~M{\"u}ller-Quade, editors, {\em
  Mathematical Methods in Computer Science. Essays in Memory of Thomas Beth},
  number 5393 in Lecture Notes in Computer Science, pages 31--42.
  Springer-Verlag, 2008.

\bibitem{lt:0328}
H.~Liu and T.~Honold.
\newblock Poster: A new approach to the main problem of subspace coding.
\newblock In {\em 9th International Conference on Communications and Networking
  in China (ChinaCom 2014, Maoming, China, Aug.~14--16)}, pages 676--677, 2014.
\newblock Full paper available as arXiv:1408.1181 [math.CO].

\bibitem{roth91}
R.~M. Roth.
\newblock Maximum-rank array codes and their application to crisscross error
  correction.
\newblock {\em IEEE Transactions on Information Theory}, 37(2):328--336, Mar.
  1991.
\newblock Comments by Emst M. Gabidulin and Author's Reply, ibid. 38(3):1183,
  1992.

\bibitem{silva-kschischang-koetter08}
D.~Silva, F.~Kschischang, and R.~Koetter.
\newblock A rank-metric approach to error control in random network coding.
\newblock {\em IEEE Transactions on Information Theory}, 54(9):3951--3967,
  Sept. 2008.

\bibitem{teirlinck87}
L.~Teirlinck.
\newblock Non-trivial $t$-designs without repeated blocks exist for all $t$.
\newblock {\em Discrete Mathematics}, 65(3):301--311, 1987.

\bibitem{thomas87}
S.~Thomas.
\newblock Designs over finite fields.
\newblock {\em Geometriae Dedicata}, 24:237--242, 1987.

\bibitem{trautmann-rosenthal10}
A.-L. Trautmann and J.~Rosenthal.
\newblock New improvements on the {E}chelon-{F}errers construction.
\newblock In A.~Edelmayer, editor, {\em Proceedings of the 19th International
  Symposium on Mathematical Theory of Networks and Systems (MTNS 2010)}, pages
  405--408, Budapest, Hungary, 5--9 July 2010.
\newblock Reprint arXiv:1110.2417 [cs.IT].

\end{thebibliography}

\def\cprime{$'$}

\end{document}